\newcommand{\footrecall}[1]{%
} 
\newcommand\numberthis{\addtocounter{equation}{1}\tag{\theequation}}
\def\U#1{U_{\neg{#1}}}
\def\T#1{T_{\rm #1}}
\def\shell#1{\accentset{\smallfrown}{#1}}
\newcommand{\N}{\mathbb{N}}
\newcommand{\F}{\mathcal{F}}
\newcommand{\mc}{\mathcal}
\theoremstyle{definition}
\newtheorem{theorem}{Theorem}[section]
\newtheorem{corollary}[theorem]{Corollary}
\newtheorem{definition}[theorem]{Definition}
\newtheorem{conjecture}[theorem]{Conjecture}
\newtheorem{proposition}[theorem]{Proposition}
\newtheorem{lemma}[theorem]{Lemma}
\newtheorem{example}[theorem]{Example}
\newtheorem{remark}[theorem]{Remark}
\begin{document}
\baselineskip=1.2\baselineskip
\title{On supratopologies, normalized families\\ and Frankl's
  conjecture}

\author{Andr\'e Carvalho\thanks{andrecruzcarvalho@gmail.com} }
\author{António Machiavelo\thanks{ajmachia@fc.up.pt}} \affil{CMUP,
  Faculdade de Ciências da Universidade do Porto

  Rua do Campo Alegre

  4169-007, Porto, Portugal}

\maketitle

\begin{abstract}
  We introduce some generalized topological concepts to deal with
  union-closed families, and show that one can reduce the proof of
  Frankl's conjecture to some families of so-called supratopological
  spaces.  We prove some results on the structure of normalized
  families, presenting a new way of reducing such a family to a
  smaller one using dual families. Applying our reduction method, we
  prove a refinement of a conjecture originally proposed by Poonen.
  Finally, we show that Frankl's Conjecture holds for the class of
  families obtained from successively applying the reduction process
  to a power set.
\end{abstract}
\section{Introduction}

Let $\F$ be a finite family of sets. In this context, by ``family of
sets'' we mean a set consisting of sets. We say that $\F$ is
\emph{union-closed} if, for any $F, G\in\F$, we have that
$F\cup G\in\F$. We define the \emph{universe} of a family $\F$ as the
union of all its member-sets, and denote it by $U(\F)$. Of course,
$\F\subseteq \mc{P}(U(\F))$, and if $\F$ is union-closed, then
$U(\F)\in \F$.

Any bijection between two finite sets, $U\to V$, induces a bijection
$\mc{P}(U)\to\mc{P}(V)$, that, of course, preserves unions, and
therefore union-closed families, as well as all the properties about
them that are pertinent in this paper. Thus, we will only be
interested in families modulo the equivalence relation induced by
bijections on the respective universes. We will then call two families
\emph{isomorphic} if such a bijection exists, and, as usual, we will
say that a family is ``such and such'' \emph{up to bijection}, meaning
up to a bijection between their universes.

For $S\subseteq U(\F)$, we define
$\F_S=\{F\in\F: F\cap S\neq \emptyset\}$ and
$\F_{\neg S}=\{F\in\F: F\cap S = \emptyset\}$.  For $a\in U(\F)$, we
denote $\F_{\{a\}}$ by $\F_a$, and similarly
$\F_{\neg\scriptscriptstyle{\{a}\}}$ by $\F_{\neg a}$.

The so called \emph{Frankl's conjecture}, also named the\emph{
  union-closed sets conjecture} has been attracting the curiosity of
many for a long time (see \cite{BS13}), mostly due to its apparently
simple statement.

\begin{conjecture}[Frankl conjecture]
  \label{origconjecture}
  If a finite family of sets $\F\neq\emptyset, \{\emptyset\}$ is
  union-closed, then there is an element of its universe that belongs
  to at least half the sets of the family, i.e.
  $$\exists\, a\in U(\F):  |\F_a|\geq \frac{|\F|}{2}.$$
\end{conjecture}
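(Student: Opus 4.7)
The plan is to attempt an inductive proof on $|\F|$ (or on $|U(\F)|$, after easy reductions), combined with structural simplifications that whittle the family down until an abundant element becomes visible. The base cases with $|\F|$ or $|U(\F)|$ small are verifiable by enumeration up to isomorphism, so the real work lies in the inductive step.

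For the step, I would first dispose of the easy observations: if $\{a\}\in\F$ for some $a\in U(\F)$, then the map $F\mapsto F\cup\{a\}$ is an injection $\F_{\tilde a}\hookrightarrow \F_a$, yielding $|\F_a|\geq|\F|/2$ immediately; one may also assume $\emptyset\notin\F$, since removing it decreases $|\F|$ by one without changing any $|\F_a|$, and the conclusion for the smaller family implies it for $\F$. Next, if two elements $a,b\in U(\F)$ are ``twins'', in the sense that every $F\in\F$ contains $a$ iff it contains $b$, they can be identified to reduce $|U(\F)|$. More substantively, for any $a\in U(\F)$ both $\F_{\tilde a}$ and the ``trace'' family $\{F\setminus\{a\}:F\in\F_a\}$ are union-closed, so the inductive hypothesis furnishes abundant elements in each of these smaller families; the task then becomes to combine this information to locate an element that is abundant in $\F$ itself.

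The difficulty --- and the reason Frankl's conjecture has resisted proof for over four decades --- is precisely that this combination step does not close in general: after all standard reductions there remain ``rigid'' families in which the inductively produced candidates in $\F_a$ and $\F_{\tilde a}$ do not align, and none of the natural injections establishes $|\F_a|\geq |\F|/2$ on the nose. In view of this, my plan would be to follow the paper's strategy: identify a structurally rich subclass --- here, those families arising from normalized supratopological data and their iterated dual reductions --- where the additional structure \emph{forces} alignment and yields an abundant element, and separately to show that every union-closed family can be reduced to one in this subclass. The delicate step, and the one I expect to be the main obstacle, is verifying that each reduction simultaneously preserves union-closedness and is compatible with the abundance conclusion, so that an abundant element found in the reduced family lifts back to one in $\F$.
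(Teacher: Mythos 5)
The statement you are trying to prove is Conjecture~\ref{origconjecture} itself --- the Frankl conjecture --- which the paper does \emph{not} prove; it is the open problem motivating the whole paper. So there is no ``paper's own proof'' to match, and any complete argument here would be a major result. Your proposal, read carefully, is not a proof: you yourself concede that the central inductive step (``combine this information to locate an element that is abundant in $\F$ itself'') ``does not close in general,'' and the fallback you describe has an unsupported half. Concretely, the paper establishes the conjecture only for certain subclasses --- $\T{FF}$ supratopological spaces and descendents of power sets (Theorem~\ref{descpower}) --- and for reductions to $\T{D}$, $\T{iD}$ and independent families. It does \emph{not} show, and it is not known, that every union-closed family ``can be reduced to one in this subclass'': nothing in the paper asserts that an arbitrary family is a descendent of a power set, nor that the reduction $\square_\downarrow$ eventually lands every family in a class where abundance is provable, nor that abundance lifts back along $\square_\downarrow$ from child to parent (the paper explicitly lists the parent/child transfer questions as open in the Future Work section). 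That missing bridge is the entire content of the conjecture, so the gap is not a detail but the whole problem.

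Two smaller points. First, your claimed reduction ``one may assume $\emptyset\notin\F$'' is slightly off in one parity case: if $|\F|$ is odd and $\emptyset\in\F$, the conclusion for $\F\setminus\{\emptyset\}$ gives only $|\F_a|\geq(|\F|-1)/2$, which is strictly less than $|\F|/2$; the standard and correct direction is that one may assume $\emptyset\in\F$ (adding it only strengthens the required bound for the enlarged family). Second, your observation about $\{a\}\in\F$ and the injection $F\mapsto F\cup\{a\}$ is correct and classical, as is the twin identification; these are fine but do not touch the core difficulty.
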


The origin of Frankl's conjecture is not completely clear. According
to \cite{BBE13} it was well known by the mid-1970s as a ``folklore
conjecture'', but it is usually attributed to Péter Frankl, as he
stated it in terms of intersection-closed set families in 1979. The
first such attribution seems to have being done by Dwight Duffus, in
1985, in \cite{Duffus}.

The problem has been studied from several viewpoints and some
interesting formulations have been obtained. For example, the
conjecture admits a lattice-theoretical version, which has been proved
firstly for modular lattices \cite{AN98} by Abe and Nakano, and later
for lower semimodular lattices by Reinhold \cite{Rei00}; it also
admits a graph-theoretical version which is trivially true for
non-bipartite graphs and it was proved to hold for the classes of
chordal bipartite graphs, subcubic bipartite graphs, bipartite
series-parallel graphs and bipartitioned circular interval graphs in
\cite{BCST15}; and there is also a very interesting, yet seemingly
unfruitful formulation, known as \textit{the Salzborn formulation},
described in \cite{Woj92}.  Compression techniques have been
attempted, yielding some partial results. They were introduced in this
context by Reimer in \cite{Rei03}, and later further explored in
\cite{Rod12} and \cite{BBE13}.  The concept of Frankl's Complete
families, or FC-families, introduced by Sarvate and Renaud
\cite{SR89}, and later formalized by Poonen \cite{Poo92}, has also
been studied by some (\cite{Mor06} and \cite{Vau02}, for example). A
more direct approach on the properties a hypothetical counterexample
of minimal size was taken by Lo Faro in \cite{LF94} and \cite{LoF94},
with some interesting results.

A very thorough survey on the topic by Bruhn and Schaudt,
\cite{BS13}, is suggested to the interested reader, as well as the
Master thesis of the first author \cite{Car16}.

Recently, a breakthrough by Gilmer \cite{Gil22} provided the first
known constant lower bound on the frequency of the most frequent
element in union-closed families. This was later improved, and the
current best bound is of 0.38234 \cite{Cam22,Yu23}.

In this paper, we will focus on the study of \emph{normalized}
union-closed families. These constitute an interesting subclass of
union-closed families, that are, in some sense, the smallest possible
separating families for a given universe. They are particularly
relevant because the above mentioned Salzborn formulation of Frankl's
conjecture only refers to normalized union-closed families, as opposed
to Frankl's conjecture that concerns all union-closed families of
sets.

We will study normalized families, proving some of their properties,
and detailing their construction. The main result of this paper is a
reduction technique that can be applied to any normalized family to
produce a smaller one.  Denoting by $\F\ominus S$ the set
$\{F\setminus S: F\in\F\}$, where $\F$ is a family of sets and
$S\subseteq U(\F)$, our result can be stated as:

\newtheorem*{ogrande}{Theorem~\ref{ogrande}}
\begin{ogrande}
  Let $\mc{N}$ be a $n$-normalized family and let $M$ be any minimal
  non-empty set of $\mc{N}$. Then the family
  $\mc{N}'=(\mc{N}\setminus \{M\})\ominus \{a_{\mc{N}}\}$ is
  $(n-1)$-normalized, for some $a_{\mc{N}}\in \mc{N}$.
\end{ogrande}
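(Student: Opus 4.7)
The plan is to first pin down the correct choice of $a_{\mc{N}}$. Because $\mc{N}$ is normalized, and normalization is a kind of minimal separating condition for $U(\mc{N})$, I expect any minimal non-empty $M \in \mc{N}$ to be forced to be a singleton: if $M$ contained two distinct elements $a, b$, then by minimality of $M$ together with union-closedness, every $F \in \mc{N}$ containing $a$ would also have to contain all of $M$ (else $F \cap M$ would be produced as a strictly smaller nonempty member via the union structure around $M$), and symmetrically for $b$; hence $\mc{N}_a = \mc{N}_b$, contradicting separation. So I set $a_{\mc{N}}$ to be the unique element of $M$.

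With that choice I would next check the ``underlying'' properties of $\mc{N}'$: that it is union-closed and that its universe is $U(\mc{N}) \setminus \{a_{\mc{N}}\}$, of size $n-1$. Given $F', G' \in \mc{N}'$, lift them to $F, G \in \mc{N} \setminus \{M\}$ with $F' = F \setminus \{a_{\mc{N}}\}$ and $G' = G \setminus \{a_{\mc{N}}\}$; then $F \cup G \in \mc{N}$ and $F' \cup G' = (F\cup G) \setminus \{a_{\mc{N}}\}$. The only way $F \cup G$ could equal $M$ is $F = G = \emptyset$, in which case the union already lies in $\mc{N}'$. For the universe, $a_{\mc{N}}$ is evidently removed, while any other $b \in U(\mc{N})$ still appears: any $F \in \mc{N}$ with $b \in F$ is different from $M = \{a_{\mc{N}}\}$, and $b$ survives in $F \setminus \{a_{\mc{N}}\}$.

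Finally, I would verify the full normalization condition for $\mc{N}'$: principally, that distinct elements of $U(\mc{N}')$ are separated by members of $\mc{N}'$, and that no redundancies are introduced. Separation follows by taking a separator $F \in \mc{N}$ for $b, c \in U(\mc{N}) \setminus \{a_{\mc{N}}\}$ and tracking $F \setminus \{a_{\mc{N}}\}$; the key point is that this passage does not accidentally identify $b$ and $c$, which would only happen if $F$ differed from another separator exactly by $\{a_{\mc{N}}\}$, a situation that the singleton nature of $M$ and union-closedness rule out.

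The main obstacle I anticipate is precisely this last step: showing that no two distinct members of $\mc{N} \setminus \{M\}$ collapse to the same set after removing $a_{\mc{N}}$, and that the ``minimality'' part of being normalized is preserved. The crucial structural input will be the observation that any $F \in \mc{N}$ with $a_{\mc{N}} \in F$ decomposes as $M \cup (F \setminus \{a_{\mc{N}}\})$, so removing $M$ and contracting by $a_{\mc{N}}$ amounts to quotienting out a very controlled part of $\mc{N}$; making this rigorous, in the precise sense demanded by the definition of $n$-normalized, is where the bulk of the technical work will lie.
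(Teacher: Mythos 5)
Your proposal is built on a false structural claim: that a minimal non-empty set $M$ of a normalized family must be a singleton. This fails already for the second example of a normalized family given in the paper, $\mc{N}=\{\emptyset\}\cup\{[n]\setminus\{a\}: a\in\{2,\ldots,n\}\}\cup\{[n]\}$, whose minimal non-empty sets all have $n-1\geq 2$ elements. The argument you give for the claim is exactly where the error enters: you assert that if $F$ meets $M$ without containing it, then ``$F\cap M$ would be produced as a strictly smaller nonempty member via the union structure around $M$''; but a union-closed family is not intersection-closed, so $F\cap M$ need not belong to $\mc{N}$, and minimality of $M$ gives you nothing here. As a consequence, your choice of $a_{\mc{N}}$ as ``the unique element of $M$'' is not even well defined in general. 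The intended $a_{\mc{N}}$ is the unique element lying in \emph{every} non-empty set of $\mc{N}$, whose existence is guaranteed by Proposition~\ref{tamanhoseparating} and whose uniqueness follows from separation; in the example above it is the element $1$, unrelated to any particular minimal set. Only in the degenerate case $\{a_{\mc{N}}\}\in\mc{N}$ does $M$ collapse to the singleton $\{a_{\mc{N}}\}$, and the paper treats that case separately and easily.

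Because the rest of your outline leans on $M=\{a_{\mc{N}}\}$ (in checking that the universe loses exactly one element, and in the separation step), it does not survive the general case. The real content of the theorem lies precisely there: one must show that if $x,y\in U(\mc{N})\setminus\{a_{\mc{N}}\}$ are separated \emph{only} by $M$ in $\mc{N}$, a contradiction ensues. The paper's argument is that any non-empty $L\in\mc{N}$ not containing both $x$ and $y$ would yield a second separator $L\cup M\neq M$ (minimality of $M$ forces $L\not\subseteq M$ for $L\neq M$), so every non-empty set other than $M$ contains $\{x,y\}$; taking, say, $x\in M$ then makes $x$ a second element common to all non-empty sets, contradicting the uniqueness of $a_{\mc{N}}$. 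Your sketch of the separation step (``a situation that the singleton nature of $M$ and union-closedness rule out'') gestures at an argument but supplies none, and the singleton hypothesis it invokes is unavailable. The proof needs to be redone with the correct definition of $a_{\mc{N}}$ and a minimality argument of the above kind.
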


Poonen, in \cite{Poo92} made the following refinement of Frankl's
conjecture:
\newtheorem*{poonen}{Conjecture~\ref{poonen}}
\begin{poonen}
  Let $\mc{F}$ be a union-closed family of sets. Unless $\F$ is a
  power set, it contains an element that appears in strictly more than
  half of the sets.
\end{poonen}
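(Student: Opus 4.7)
The plan is to use the reduction provided by Theorem~\ref{ogrande} as the inductive engine.

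First I would reduce to the normalized case. The framework of the paper should associate to a non-trivial union-closed $\F$ a normalized family $\mc{N}$ preserving the relevant frequency information and the property of being (or failing to be) a power set; proving Poonen's conjecture then reduces to showing that every non-power-set $n$-normalized family contains an element belonging to strictly more than half of its sets.

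Next I would induct on $n$. The base cases $n\in\{0,1\}$ yield only power sets, so nothing needs to be proved. For $n\ge 2$, pick a minimal non-empty $M\in\mc{N}$ and form $\mc{N}'=(\mc{N}\setminus\{M\})\ominus\{a_\mc{N}\}$ by Theorem~\ref{ogrande}. I would then split into two cases. In \emph{Case A}, when $\mc{N}'$ is not a power set, the inductive hypothesis supplies $b\in U(\mc{N}')$ with $|\mc{N}'_b|>|\mc{N}'|/2$, and the goal is to transport this strict majority back to $\mc{N}$ by tracking how the three operations involved in the reduction — removing $M$, deleting the element $a_\mc{N}$, and collapsing pairs of sets that become equal — affect the counts. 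A key observation is that each collapse increases $|\mc{N}_{a_\mc{N}}|$ and $|\mc{N}_b|$ (for $b$ in the collapsed set) by one relative to $\mc{N}'$, providing slack that should preserve strictness as $|\mc{N}|$ grows. In \emph{Case B}, when $\mc{N}'$ is the power set of an $(n-1)$-element set, every element has frequency exactly $|\mc{N}'|/2$ in $\mc{N}'$; because $\mc{N}$ itself is not a power set, $M$, the collapses, and the choice of $a_\mc{N}$ must jointly break this symmetry, and I would argue that this forces $a_\mc{N}$ (or an element of $M$) past the half-threshold in $\mc{N}$.

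The main obstacle is Case B: it requires a precise enumeration of the non-power-set $n$-normalized families that sit one step above a power set under the reduction, together with a direct calculation verifying strict majority in each. Case A, though conceptually easier, also depends on whatever rigidity the proof of Theorem~\ref{ogrande} provides for the element $a_\mc{N}$, and on careful combinatorial bookkeeping of the collapse structure; a secondary concern is ensuring that the initial reduction to normalized families preserves strict (rather than merely non-strict) majority when lifted back to $\F$.
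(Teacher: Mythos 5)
There is a fundamental problem: Conjecture~\ref{poonen} is stated in the paper as a \emph{conjecture}, not a theorem. It strictly implies Frankl's conjecture, which is open, so the paper does not (and could not) prove it; what the paper proves is only Theorem~\ref{poonenequiv}, the \emph{equivalence} of Conjecture~\ref{poonen} with the sharp version, Conjecture~\ref{nossaguess}, and that proof runs in the opposite direction from yours: it takes a hypothetical counterexample to Frankl, passes to the dual normalized family (a counterexample to Salzborn), and then builds $n-2\lambda+1$ successive trivial \emph{parents} to force the maximal irreducible set to have size exactly half, at which point Conjecture~\ref{nossaguess} yields a contradiction. Any plan that claims to actually establish Conjecture~\ref{poonen} by induction must therefore contain an error, and yours does, in several places.

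Concretely: (1) Your ``reduction to the normalized case'' mistranslates the statement. The duality $\F\mapsto\F^*$ converts element frequencies in $\F$ into \emph{sizes of irreducible sets} of $\F^*$ (this is exactly the content of Theorem~\ref{franklsalz}); it does not convert them into element frequencies of $\F^*$. The frequency version of Poonen for normalized families is trivially true and useless: $a_{\mc{N}}$ lies in $n$ of the $n+1$ sets, which exceeds half for $n\geq 2$, yet this says nothing about the original family $\F$. What you would actually need is a sharp Salzborn-type statement about irreducible set sizes, which your induction does not address. (2) The ``collapsing pairs of sets'' you invoke for slack do not occur: since $a_{\mc{N}}$ belongs to every non-empty set of $\mc{N}$, the map $N\mapsto N\setminus\{a_{\mc{N}}\}$ is injective on $\mc{N}\setminus\{M\}$ (that is why $|\mc{N}'|=|\mc{N}|-1$ in Theorem~\ref{ogrande}). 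Without that slack, Case~A fails to transport strictness: $|\mc{N}'_b|>|\mc{N}'|/2=(|\mc{N}|-1)/2$ only yields $|\mc{N}_b|\geq|\mc{N}|/2$ when $|\mc{N}|$ is even. (3) Case~B, which you identify as the main obstacle, is in fact vacuous for $n\geq 3$ — an $(n-1)$-normalized family has exactly $n$ sets, while a power set on its universe would need $2^{n-1}$ — which again signals that the induction is being run on the wrong statement. In short, the proposal is not a proof, and no repair within this framework can succeed without resolving Frankl's conjecture itself.
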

\noindent We use the reduction introduced by Theorem~\ref{ogrande}
to prove that this statement can be weakened in the following way.
\newtheorem*{nossaguess}{Conjecture~\ref{nossaguess}}
\begin{nossaguess}
  Let $\mc{F}$ be a union-closed family such that the most frequent
  element belongs to exactly half the sets in $\F$. Then $\F$ must be
  a power set.
\end{nossaguess}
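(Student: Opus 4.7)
The plan is to argue by strong induction on $|\F|$ (or equivalently on the size of the universe), using Theorem~\ref{ogrande} as the inductive engine. First I would reduce to the case where $\F$ is normalized, invoking the Salzborn-type correspondence between general union-closed families and normalized ones. Here one needs to check two compatibility facts: that under this correspondence, $\F$ is a power set if and only if the associated normalized family $\mc{N}$ is a power set, and that the hypothesis ``max frequency equals $|\F|/2$'' is inherited by $\mc{N}$.

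Assume now $\mc{N}$ is $n$-normalized with $\max_{a} |\mc{N}_a| = |\mc{N}|/2$; I want to conclude $\mc{N}=\mc{P}(U(\mc{N}))$. Applying Theorem~\ref{ogrande} I obtain $\mc{N}'=(\mc{N}\setminus \{M\})\ominus\{a_{\mc{N}}\}$, an $(n-1)$-normalized family. The heart of the argument is then to establish two things: (a) that the equality hypothesis is preserved, i.e.\ the max frequency of $\mc{N}'$ equals $|\mc{N}'|/2$; and (b) that the conclusion obtained from the induction hypothesis, namely $\mc{N}'=\mc{P}(U(\mc{N}'))$, lifts back to give $\mc{N}=\mc{P}(U(\mc{N}))$.

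For (a) I would carefully track the frequencies: deleting the minimal nonempty set $M$ subtracts $1$ from $|\mc{N}|$ and from $|\mc{N}_b|$ for each $b\in M$, while $\ominus\{a_\mc{N}\}$ identifies any pair of sets differing only in $a_\mc{N}$. In a normalized family the number of collisions produced by the $\ominus$ step is structurally controlled, so $|\mc{N}'|$ and the frequencies of the surviving elements are determined by the original data, and an arithmetic check should then force equality in the frequency bound to persist. For (b), since in a full power set every single element attains the maximum frequency, the reverse reduction (adjoining $a_\mc{N}$ back into the universe and reinserting a minimal set) must again fill out every subset of $U(\mc{N})$; any missing set would strictly decrease some element's frequency below $|\mc{N}|/2$, contradicting the standing equality.

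The main obstacle, I expect, is the arithmetic of part (a). The operation $\ominus\{a_\mc{N}\}$ can in principle cause an uncontrolled drop in $|\mc{N}|$ due to set collisions, and it need not fix the identity of the most frequent element. Pinning down how the ``exactly half'' property survives the reduction — and choosing $M$ and $a_\mc{N}$ so that it does — is the delicate combinatorial step; the normalization condition is presumably what makes this tractable. Once that is handled, the induction closes cleanly, and one concludes that the hypothesis of Conjecture~\ref{nossaguess} forces $\F$ to be a power set.
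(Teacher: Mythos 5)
There is a fundamental mismatch here: the statement you are trying to prove is stated in the paper as a \emph{conjecture}, not a theorem, and the paper offers no proof of it. What the paper proves (Theorem~\ref{poonenequiv}) is that Conjecture~\ref{nossaguess} is \emph{equivalent} to Poonen's Conjecture~\ref{poonen}, and in particular that it implies Frankl's conjecture. So any complete proof of this statement would resolve Frankl's conjecture; the places where your argument says ``an arithmetic check should then force equality to persist'' and ``the normalization condition is presumably what makes this tractable'' are exactly where the open problem lives, and they are asserted rather than carried out.

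Beyond that meta-objection, the specific plan breaks at several concrete points. First, the reduction to normalized families does not transport the hypothesis or the conclusion. A power set $\mc{P}([n])$ with $n\geq 2$ is never normalized ($2^n \neq n+1$), and the dual $\F^*$ of a power set is not a power set (in the proof of Theorem~\ref{descpower} one sees $\mc{P}([n])^*$ has $2^n$ sets on a universe of size $2^n-1$); so ``$\F$ is a power set iff $\mc{N}$ is a power set'' is false. Worse, in any $n$-normalized family with $n\geq 2$ the element $a_{\mc{N}}$ lies in all $n$ of the $n+1$ sets, so the maximum frequency is $|\mc{N}|-1 > |\mc{N}|/2$: the class of normalized families satisfying your inductive hypothesis is essentially empty. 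The correct dictionary (Theorem~\ref{franklsalz}) converts frequencies in $\F$ into \emph{sizes of irreducible sets} of $\F^*$, so the dual statement concerns $\max\{|I|:I\in J(\mc{N})\} = |\mc{N}|/2$, which your frequency-tracking step does not address. Second, even granting a normalized setting, step (a) cannot hold as stated: the paper's proof of Theorem~\ref{ogrande} shows $|\mc{N}'| = |\mc{N}|-1$ exactly (there are no collisions under $\ominus\{a_{\mc{N}}\}$, since $a_{\mc{N}}$ lies in every non-empty set), so the parity of $|\mc{N}|$ flips at each step and ``exactly half'' cannot literally persist through the induction. The only honest route available with the paper's tools is the one the paper takes: prove equivalence with Conjecture~\ref{poonen} via the trivial-parent construction, and leave the conjecture itself open.
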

\noindent Note that this conjecture only concerns families
\emph{sharply} satisfying the conclusion of Frankl's conjecture,
apparently making no claim on the original conjecture. However, we
show that:
\newtheorem*{poonenequiv}{Theorem~\ref{poonenequiv}}
\begin{poonenequiv}
  Conjectures~\ref{poonen} and~\ref{nossaguess} are equivalent.
\end{poonenequiv}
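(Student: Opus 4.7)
The plan is to prove each implication separately.
The direction Conjecture~\ref{poonen} $\Rightarrow$ Conjecture~\ref{nossaguess} is essentially a tautology: if $\F$ is union-closed and the most frequent element appears in exactly $|\F|/2$ sets, then no element appears in strictly more than half the sets, so the contrapositive of Poonen's statement forces $\F$ to be a power set.

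For the reverse implication, the plan is a reduction to the ``equality case''. Assume Conjecture~\ref{nossaguess} holds, and suppose, toward a contradiction, that there exists a union-closed family $\F\neq\emptyset,\{\emptyset\}$ that fails Conjecture~\ref{poonen}: $\F$ is not a power set, yet every $a\in U(\F)$ appears in at most $|\F|/2$ sets. I would split into two cases according to whether the supremum of the frequencies is attained at $|\F|/2$ or not. If some $a\in U(\F)$ appears in exactly $|\F|/2$ sets, Conjecture~\ref{nossaguess} applies directly and yields that $\F$ is a power set, a contradiction.

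The real content lies in the case where every element appears in strictly fewer than $|\F|/2$ sets; here I would use a doubling construction to manufacture a family that does hit the boundary. Pick an element $*\notin U(\F)$ and set
$$\F^{*}\;=\;\F\,\cup\,\{F\cup\{*\}:F\in\F\}.$$
One checks routinely that $\F^{*}$ is union-closed, that $|\F^{*}|=2|\F|$, that $*$ appears in $|\F|=|\F^{*}|/2$ sets, and that each $a\in U(\F)$ appears in $2f_\F(a)<|\F|$ sets of $\F^{*}$. Hence $*$ is (a) most frequent element of $\F^{*}$, with frequency exactly $|\F^{*}|/2$, so Conjecture~\ref{nossaguess} forces $\F^{*}$ to be a power set. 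Since the sets of $\F^{*}$ not containing $*$ are exactly the sets of $\F$, this in turn forces $\F=\mathcal{P}(U(\F))$, contradicting the assumption that $\F$ is not a power set.

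The only subtle point I foresee is verifying that the hypotheses of Conjecture~\ref{nossaguess} genuinely apply to $\F^{*}$ (i.e.\ $\F^{*}\neq\emptyset,\{\emptyset\}$ and $\F^{*}$ is union-closed up to bijection); both follow from $|\F^{*}|\geq 4$ together with the evident stability of unions under adding $*$. The main conceptual step is recognising that the doubling trick converts a strict-inequality counterexample to Poonen into an equality counterexample to Conjecture~\ref{nossaguess}, which is precisely what makes the two statements logically equivalent.
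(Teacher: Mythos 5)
Your proof is correct, but the nontrivial direction follows a genuinely different route from the paper's. The paper proves that Conjecture~\ref{nossaguess} implies Frankl's conjecture by assuming a counterexample, passing to the Salzborn formulation via Theorem~\ref{franklsalz}, and then taking $n-2\lambda+1$ successive \emph{trivial parents} of the associated independent family so that the largest irreducible set ends up with exactly half the size of the family; translating back through duals produces a family whose most frequent element has frequency exactly one half, to which Conjecture~\ref{nossaguess} applies, and the contradiction comes from the presence of a singleton in the dual. Your argument instead proves Conjecture~\ref{nossaguess} $\Rightarrow$ Conjecture~\ref{poonen} directly: the single doubling $\F^{*}=\F\cup\{F\cup\{*\}:F\in\F\}$ converts any family all of whose frequencies are strictly below $|\F|/2$ into one whose (unique) most frequent element $*$ sits at exactly $|\F^{*}|/2$, and peeling off $*$ from the resulting power set forces $\F$ itself to be a power set. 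Both proofs share the underlying idea of inflating the family until the boundary case is attained, but yours does it in one elementary step, needs none of the dual/normalized-family machinery, and avoids the detour through the Salzborn formulation; the paper's version is heavier but serves to showcase the reduction and trivial-parent constructions that are the subject of the paper. The only (degenerate) case your argument silently skips is $\F=\emptyset$, which is already excluded by the standing conventions around Frankl's conjecture, and which the paper's proof does not treat either.
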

\noindent Naturally, the non-trivial part is proving that
Conjecture~\ref{nossaguess} implies the Frankl conjecture.

Finally, using the reduction of the statement of
Theorem~\ref{ogrande}, we introduce a reduction technique for
arbitrary families, and use it to prove that Frankl's conjecture holds
for a certain class of families obtained from successively reducing a
power set, families that we call descendents of the original family.

\newtheorem*{descpower}{Theorem~\ref{descpower}}
\begin{descpower}
  If a family is a descendent of a power set, then it satisfies the
  Frankl Conjecture.
\end{descpower}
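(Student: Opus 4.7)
The natural strategy is induction on the number of reduction steps used to produce $\F$ from the ambient power set. For the base case, a power set $\mc{P}(U)$ satisfies Frankl's conjecture with equality, since every element of $U$ belongs to exactly $2^{|U|-1}=|\mc{P}(U)|/2$ sets.

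For the inductive step, suppose $\F'$ is a descendent of a power set satisfying Frankl's conjecture, and $\F=(\F'\setminus\{M\})\ominus\{a\}$ is one further reduction with $M$ a minimal non-empty member of $\F'$ and $a$ the chosen element. I would relate the relevant quantities: $|\F|$ equals $|\F'|-1$ minus the number of twin pairs $\{F,F\cup\{a\}\}\subseteq\F'\setminus\{M\}$ with $a\notin F$ that get identified by the $\ominus\{a\}$ operation; and for each $b\in U(\F)=U(\F')\setminus\{a\}$, the frequency $|\F_b|$ drops from $|\F'_b|$ by $1$ (if $b\in M$) plus the count of the identified twin pairs with $b\in F$. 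Comparing these two expressions, one aims to show that the Frankl witness for $\F'$ (or a suitable adjustment of it) is still a Frankl witness for $\F$.

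As a sanity check, I would first compute the single-step reduction of a power set explicitly. Since every minimal non-empty set of $\mc{P}(U)$ is a singleton $\{c\}$, a short case analysis on whether $a=c$ or $a\neq c$ shows in both cases that $(\mc{P}(U)\setminus\{\{c\}\})\ominus\{a\}=\mc{P}(U\setminus\{a\})$. This strongly suggests a key lemma: \emph{every descendent of a power set is itself a power set on a smaller universe}. If this can be proved by induction on reduction steps, then the theorem is immediate, since every power set satisfies Frankl's conjecture by the base case.

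The main obstacle I anticipate is pinning down exactly how the reduction technique for arbitrary families (the generalization of Theorem~\ref{ogrande} introduced by the authors) constrains the admissible choices of $M$ and $a$, and verifying that \emph{every} admissible single-step reduction of a power set is again a power set. Once this structural invariant is secured, the induction closes the argument. Should the reduction turn out to be strictly more permissive than the normalized version --- producing descendents that are not themselves power sets --- one would instead have to propagate the Frankl witness quantitatively through the frequency accounting above, exploiting that each reduction step from a power-set-like family only merges carefully paired twins, limiting the damage done to the extremal element's frequency.
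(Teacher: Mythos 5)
There is a genuine gap, and it comes from a misidentification of the reduction operation. For an arbitrary union-closed family the paper's reduction is \emph{not} $(\F\setminus\{M\})\ominus\{a\}$ applied to $\F$ itself; it is $\F_\downarrow = J((\F^*)')^*$, i.e.\ the normalized reduction $\square'$ of Theorem~\ref{ogrande} conjugated by the duality operator $\square^*$ (see the commutative diagram~\eqref{eq:diagram}). Theorem~\ref{ogrande} only applies to normalized families, and $\mc{P}(U)$ is not normalized (it has $2^{|U|}$ sets, not $|U|+1$), so your ``sanity check'' computation $(\mc{P}(U)\setminus\{\{c\}\})\ominus\{a\}=\mc{P}(U\setminus\{a\})$, while arithmetically fine, is not a computation of $\mc{P}(U)_\downarrow$. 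Your proposed key lemma --- every descendent of a power set is again a power set --- is in fact false: since $|\F_{\downarrow k}|=|\F|-k$, a $k$-fold descendent of $\mc{P}([n])$ has $2^n-k$ sets, which is not a power of $2$ for $0<k<2^{n-1}$, so no proper descendent of a power set is a power set. (The example following diagram~\eqref{eq:diagram} already illustrates that a child can live on a different universe and look ``quite different'' from its parent.) The fallback ``twin-pair frequency accounting'' inherits the same defect, since it too analyzes the wrong operation; and in any case a witness-propagation argument would need the parent to satisfy the conjecture \emph{strictly} when $|\F_{\downarrow k}|$ is even (as the paper's own Remark after the diagram notes), which a bare inductive hypothesis does not supply.

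The paper's actual proof works entirely on the dual side. One computes that $\mc{N}=\mc{P}([n])^*$ is a $(2^n-1)$-normalized family containing exactly $\binom{n}{k}$ sets of size $2^n-2^{n-k}$ for each $k$, and then tracks the successive normalized reductions $\mc{N}^{(\ell)}$, showing at every stage that some irreducible set has size at least half the number of sets --- i.e.\ the Salzborn formulation (Conjecture~\ref{salzborn}) holds --- via the binomial inequality of Lemma~\ref{ineq binomial}. Propositions~\ref{tudo+} and~\ref{tudo*} together with Theorem~\ref{franklsalz} then transfer this back to the descendents $\F_{\downarrow\ell}$. To repair your argument you would need to either adopt this dual-side bookkeeping or prove a correct structural invariant for $J((\F^*)')^*$; the invariant you propose is not it.
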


The paper is organized as follows. In Section~\ref{prelim}, we present
some preliminary notions and structural results on union-closed
families and on Frankl's conjecture, and show that the statement of
Frankl's conjecture can be generalized in a natural way: if true, then
one can guarantee the existence of a subset of size $k$ being in at
least $\frac1{2^k}$ of the sets of a union-closed family. In Section
\ref{GenTop}, we show the relevance of some generalized topological
concepts, namely supratopologies, for the study of union-closed
families, introducing some separation axioms and prove that Frankl's
conjecture can be reduced to families satisfying some of those axioms.
In Section \ref{norm}, we start by presenting the already known
relation between union-closed and normalized families in detail,
combining the ideas from \cite{Woj92} and \cite{JTPV97}. We then prove
Theorem \ref{ogrande}, establishing a natural way to reduce a
normalized family to a smaller one, which looks non-trivial when seen
in the original, non-normalized, context.  Finally, we present some
properties of normalized families obtained through this reduction and
some of its connections to Frankl's conjecture, including the
weakening of Poonen conjecture, in Section~\ref{frankl}, and we prove
that all families descending from power sets satisfy the Frankl
conjecture.
\section{Preliminaries and an equivalent formulation}
\label{prelim}

Let $\F$ be a family of sets.  As usual, a set $F\in \F$ is said to be
\emph{irreducible} in $\F$ if, for all $G,H\in \F$, we have
$F = G\cup H\implies G=F$ or $H=F$. The set of all irreducible non-empty
sets in a family $\F$ is denoted by $J(\F)$. Given a family
$\mc G\subseteq \mc P(U)$, we denote by $\langle \mc G\rangle$ the
smallest union-closed family in $\mc{P}(U)$ that contains $\mc G$,
which is the family whose elements are all possible finite unions of
elements of $\mc G$, the empty set included, as it is the union of an
empty family. When $\F = \langle\mc G\rangle$, we say that $\mc G$ is
a generating family for $\F$. Of course, $\F$ is generated by its
irreducible sets.

The family $\F$ is said to \emph{separating} if, for every two
distinct elements $a,b\in U(\F)$, there is a set $O\in\F$ such that
$|O\cap\{a,b\}|=1$, i.e.~$\F_a\neq \F_b$. A family is called
\emph{normalized} if it is a separating union-closed family such that
$\emptyset \in \F$ and $ |\F|= |U(\F)| + 1$. We will call it
\emph{$n$-normalized} to mean that $|U(\F)|=n$.  Setting
$[n]=\{1,2,\ldots,n\}$, a simple example of an $n$-normalized family
is given by the \emph{$n$-staircase family}
$\{\emptyset, [1],\ldots, [n]\}$. Another example is the family
$\{\emptyset\}\cup \{[n]\setminus\{a\}: a\in\{2,3,\ldots,n\}\}
\cup \{[n]\}.$

We write $\U{a}$ instead of $U(\F_{\neg a})$ when the family involved
is clear, which is thus the set of all elements belonging to some set
of $\F$ that does not contain $a$.  The following results show that
normalized families are the smallest possible families that are both
separating and union-closed.

\begin{lemma}
  \label{iffseparating}
  A family $\F$ is separating if and only if $\U{a}\neq \U{b}$, for all
  $a\neq b\in U(\F)$.
\end{lemma}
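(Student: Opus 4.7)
The plan is to prove both directions by unpacking the definition $\U{a}=\bigcup\{F\in\F:a\notin F\}$ and relating membership in $\U{a}$ to the comparison of the incidence sets $\F_a,\F_b$. Two simple observations will do almost all the work: first, by construction $a\notin\U{a}$; second, for $a\neq b$ one has $a\in\U{b}$ if and only if there exists some $F\in\F$ with $a\in F$ and $b\notin F$, that is, $\F_a\setminus\F_b\neq\emptyset$.

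For the forward implication, I would assume $\F$ separates and pick distinct $a,b\in U(\F)$. Then $\F_a\neq\F_b$, so, exchanging the roles of $a$ and $b$ if necessary, some $F\in\F$ contains $a$ but not $b$. By the second observation above this gives $a\in\U{b}$, while the first observation gives $a\notin\U{a}$. Hence $\U{a}\neq\U{b}$.

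For the converse, I would assume $\U{a}\neq\U{b}$ with $a\neq b$ and, up to swapping $a$ and $b$, pick $c\in\U{a}\setminus\U{b}$. By definition of $\U{a}$ there is some $F\in\F$ with $c\in F$ and $a\notin F$. The condition $c\notin\U{b}$ says precisely that every set of $\F$ containing $c$ must also contain $b$; applied to $F$ it yields $b\in F$. Thus $F\cap\{a,b\}=\{b\}$, which exhibits the separator needed to show that $\F$ is separating.

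I do not anticipate a real obstacle here: the argument is purely a rearrangement of the quantifiers in the definitions of $\U{a}$ and of ``separating''. The only place one has to be mildly careful is noting that the witness $c$ in the converse may coincide with $b$, but this case is harmless since then $b\in\U{a}$ directly produces a set of $\F$ containing $b$ but not $a$.
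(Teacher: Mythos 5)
Your proof is correct, and the forward direction is exactly the paper's argument (a set $O$ with $a\in O$, $b\notin O$ gives $a\in\U{b}$ while $a\notin\U{a}$). For the converse the paper simply takes the contrapositive --- $\F_a=\F_b$ implies $\F_{\tilde a}=\F_{\tilde b}$, hence $\U{a}=\U{b}$ --- which is a one-line shortcut compared to your direct construction of a separating set from a witness $c\in\U{a}\setminus\U{b}$; but your version is also valid, and your closing remark about the case $c=b$ is in fact unnecessary, since the general argument already covers it.
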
 

\begin{proof}
  Suppose $\F$ is separating and let $a\neq b\in U(\F)$. Without loss
  of generality, there is  a set $O\in\F$ such that $a\in O$ and
  $b\not\in O$. Then, $a\in O\subseteq \U{b}$ and $a\not\in
  \U{a}$. Hence, $\U{a}\neq \U{b}$.

  Conversely, if $\F$ is not separating, there are $a\neq b\in U(\F)$
  such that $\F_a=\F_b$. Thus, $\F_{\neg a}=\F_{\neg b}$, and
  therefore $\U{a}= \U{b}$.
\end{proof}

\begin{proposition}
  \label{tamanhoseparating}
  A separating union-closed family of sets $\F$ with a universe with
  $n$ elements has at least $n$ sets. If, moreover, $\F$ is
  normalized, then there is $a\in U(\F)$ such that $\U{a}=\emptyset$,
  i.e.~$a$ belongs to every non-empty set in $\F$.
\end{proposition}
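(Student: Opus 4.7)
The plan is to study the map $a\mapsto \U{a}$ from $U(\F)$ into $\F\cup\{\emptyset\}$. By Lemma~\ref{iffseparating}, this map is injective. Two preliminary observations will do most of the work: first, by union-closure, $\U{a}\in\F$ whenever $\F_{\tilde a}\neq \emptyset$, and otherwise $\U{a}=\emptyset$, so the map really does land in $\F\cup\{\emptyset\}$; second, $U(\F)$ belongs to $\F$ (again by union-closure) and differs from every $\U{a}$, since $a\in U(\F)$ but $a\notin \U{a}$. Hence the image of $a\mapsto \U{a}$ is contained in the set $\bigl(\F\cup\{\emptyset\}\bigr)\setminus\{U(\F)\}$.

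For the first claim, I would simply count. If $\emptyset\notin \F$, then the target set has cardinality at most $|\F|$, and injectivity yields $|\F|\ge n$. If $\emptyset\in\F$, the target is $\F\setminus\{U(\F)\}$, of size $|\F|-1$, giving the stronger inequality $|\F|\ge n+1$; either way, $|\F|\ge n$.

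For the second claim, $\F$ is normalized, so $\emptyset\in\F$ and $|\F|=n+1$. The injection from $U(\F)$ into $\F\setminus\{U(\F)\}$ is now between two sets of the same size $n$, hence a bijection. Assuming $n\ge 1$ so that $\emptyset\neq U(\F)$, the empty set lies in the codomain and must therefore be hit by some $a\in U(\F)$, giving $\U{a}=\emptyset$; equivalently, $a$ belongs to every non-empty set of $\F$.

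I do not expect a genuine obstacle here: once one identifies $a\mapsto \U{a}$ as the correct map and notices that $U(\F)$ is automatically excluded from its image, the whole proposition reduces to a pigeonhole count. The only subtleties to be careful about are the bookkeeping of whether $\emptyset$ already belongs to $\F$ (which changes the size of the codomain by one) and the harmless non-degeneracy assumption $n\ge 1$ needed to ensure $\emptyset\neq U(\F)$ in the normalized step.
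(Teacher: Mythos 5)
Your proof is correct and follows essentially the same route as the paper: both arguments rest on Lemma~\ref{iffseparating} to make $a\mapsto \U{a}$ injective, observe that $U(\F)$ is never in its image, and then count, with the normalized case handled by the same pigeonhole on $\emptyset$. Your packaging as an explicit injection into $(\F\cup\{\emptyset\})\setminus\{U(\F)\}$ is a slightly cleaner way of organizing the paper's case distinction about the (at most one) element with $\F_a=\F$, but the substance is identical.
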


\begin{proof}
  Since $\F$ is union-closed, then $U(\F)\in \F$ and $\U{a}\in\F$, for
  all $a\in U(\F)$ such that $\F_a\neq \F$.  Also, from the fact
  that $\F$ is separating, it follows that there can only be at most
  one $a\in U(\F)$ such that $\F_a = \F$ and so, by the previous
  proposition, we have that $\F$ has at least $n$ sets, and at least
  $n+1$ in case there is no element belonging to all sets.

  When $\F$ is normalized, since we require that $\emptyset\in \F$,
  if there was no element belonging to every non-empty set in $\F$,
  then there would be at least $n+2$ elements in $\F$, by the
  argument in the last paragraph.
\end{proof}

We now present a generalization of the concept of separation in
union-closed families, which we call \emph{independence}: we can think
of it as a form of a weak separation between elements and
sets. Independent families will have a relevant role later in this
paper.

\begin{definition}
  \label{def:independent}
  A family $\F$ of sets is called \emph{independent} if, for all
  $a\in U(\F)$ and for all $S\subseteq U(\F)\setminus \{a\}$, one of
  the following conditions holds:
  \begin{itemize}
  \item there is a set $O\in\F_{\neg a}$ such that
    $O\cap S\neq\emptyset$;
  \item there is a set $O\in\F_a$ such that $O\cap S=\emptyset$.
  \end{itemize}
  We say that a family is \emph{dependent} if it is not independent.
\end{definition}

It is easy to see by the definition, that independent families are in
particular separating, by just taking $|S|=1$.  We now present a
characterization of independence that will become useful in Section
\ref{norm}.

\begin{lemma}
  \label{dependents}
  A family $\F$ of sets is dependent if and only if there exist
  $a\in U(\F)$ and $S\subseteq U(\F)\setminus\{a\}$ such that
  $\F_a=\bigcup\limits_{b\in S} \F_b$.
\end{lemma}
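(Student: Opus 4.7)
The plan is to prove this by unwinding the definition of \emph{independent} into quantifier form and then recognizing the negated statements as set-containments between $\F_a$ and $\bigcup_{b\in S}\F_b$.

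First I would restate: $\F$ is dependent if and only if there exist $a\in U(\F)$ and $S\subseteq U(\F)\setminus\{a\}$ such that \emph{both} of the following hold simultaneously:
\begin{itemize}
\item every $O\in \F_{\tilde a}$ satisfies $O\cap S=\emptyset$;
\item every $O\in \F_a$ satisfies $O\cap S\neq \emptyset$.
\end{itemize}
This is just the negation of the ``one of the following conditions holds'' clause in Definition~\ref{def:independent}, pushing the negation through the two existentials.

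Next I would translate each of these two bullet conditions into an inclusion. The first bullet says that no set avoiding $a$ meets $S$; equivalently, every member of $\bigcup_{b\in S}\F_b$ contains $a$, i.e.\ $\bigcup_{b\in S}\F_b \subseteq \F_a$. The second bullet says that every set containing $a$ meets $S$, i.e.\ for every $O\in\F_a$ there is some $b\in S$ with $b\in O$; this is exactly $\F_a\subseteq \bigcup_{b\in S}\F_b$. Combining the two inclusions gives the equality $\F_a=\bigcup_{b\in S}\F_b$.

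For the converse, suppose $\F_a=\bigcup_{b\in S}\F_b$ for some $a$ and $S\subseteq U(\F)\setminus\{a\}$. If $O\in \F_{\tilde a}$, then $O\notin \F_a$, so $O\notin \F_b$ for every $b\in S$, hence $O\cap S=\emptyset$; and if $O\in \F_a$, then $O\in \F_b$ for some $b\in S$, so $O\cap S\neq\emptyset$. Thus, for this particular $a$ and $S$, both alternative conditions of independence fail, so $\F$ is dependent. There is no real obstacle here; the only subtle point is being careful with the quantifier negation in the first paragraph, since the definition of independence is a universal statement over $a$ and $S$ containing a disjunction of two existentials.
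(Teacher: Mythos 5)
Your proof is correct and follows essentially the same route as the paper: the paper likewise negates Definition~\ref{def:independent} to get the biconditional ``$a\in O \iff O\cap S\neq\emptyset$'' for all $O\in\F$, and then verifies the two inclusions $\F_a\subseteq\bigcup_{b\in S}\F_b$ and $\bigcup_{b\in S}\F_b\subseteq\F_a$ exactly as you do. Your explicit handling of the quantifier negation is a slightly more careful presentation of the same argument.
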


\begin{proof}
  Let $\F$ be a dependent family of sets. Then there exist
  $a\in U(\F)$ and $S\subseteq U(\F)\backslash\{a\}$ such that, given
  any set $O\in\F$, we have that $a\in O$ if and only if
  $O\cap S\neq\emptyset$.  So, for $O\in \F_a$, we have that
  $O\cap S\neq \emptyset$, thus we may take some element
  $b\in O\cap S$ to conclude that $O\in \F_b$. Therefore,
  $\F_a\subseteq \bigcup_{b\in S} \F_b$. Now, if $O\in \F_b$ for some
  $b\in S$, that means that $b\in O\cap S$, and so, in particular,
  $O\cap S\neq\emptyset$, from which it follows that $a\in O$. This
  shows that $\bigcup_{b\in S} \F_b\subseteq \F_a$.

  Conversely, suppose there exists $a\in U(\F)$ and
  $S\subseteq U(\F)\backslash\{a\}$ such that
  $\F_a=\bigcup_{b\in S} \F_b$. Let $O\in\F$. If $a\in O$, then there
  exists $b\in S$ such that $O\in\F_b$, from which it follows that
  $O\cap S\neq\emptyset$; if $a\not\in O$, then, for all $b\in S$,
  $O\not\in\F_b$, and so $O\cap S = \emptyset$. Hence, $\F$ is
  dependent.
\end{proof}

\begin{proposition}
  \label{indep enough}
  To prove Frankl's conjecture, it suffices to show it holds for
  independent families.
\end{proposition}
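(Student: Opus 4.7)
The plan is to prove the contrapositive reduction by induction on $n=|U(\F)|$: assuming Frankl conjecture holds for all independent union-closed families, I show it holds for every finite union-closed family $\F\neq\emptyset,\{\emptyset\}$. The base cases ($n\leq 1$, or the trivial situation $\F=\{X\}$) are immediate. For the inductive step, if $\F$ is independent, the hypothesis applies directly. Otherwise $\F$ is dependent, and I will produce a smaller union-closed family $\F'$ on a universe of size $n-1$, preserving every element's frequency, so that the inductive hypothesis transfers a large element of $\F'$ back to $\F$.

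The construction uses Lemma~\ref{dependents}: dependency gives some $a\in U(\F)$ and $S\subseteq U(\F)\setminus\{a\}$ with $\F_a=\bigcup_{b\in S}\F_b$, which means that for every $F\in\F$, $a\in F$ if and only if $F\cap S\neq\emptyset$. Set $\F'=\F\ominus\{a\}=\{F\setminus\{a\}:F\in\F\}$. Three things need to be checked: (i) $\F'$ is union-closed, which is immediate from $(F\cup G)\setminus\{a\}=(F\setminus\{a\})\cup(G\setminus\{a\})$; (ii) the projection $\pi:F\mapsto F\setminus\{a\}$ is a bijection $\F\to\F'$, so $|\F'|=|\F|$; and (iii) for every $c\in U(\F')=U(\F)\setminus\{a\}$ one has $|\F_c|=|\F'_c|$, since $c\neq a$ implies $c\in F$ iff $c\in F\setminus\{a\}$, and (ii) rules out any collisions that could falsify the count. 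Once (i)--(iii) are in place, the induction hypothesis furnishes $c\in U(\F')$ with $|\F'_c|\geq |\F'|/2$, and then $|\F_c|=|\F'_c|\geq |\F'|/2 = |\F|/2$, as required. One also has to observe that $\F'\neq\emptyset,\{\emptyset\}$: the bijection shows $|\F'|=|\F|\geq 2$, and $\F'=\{\emptyset\}$ would force $\F\subseteq\{\emptyset,\{a\}\}$, a case in which $\F$ is readily seen to be either independent or covered by the base step.

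The main obstacle is verifying injectivity of $\pi$ in step (ii), as this is the only place where the dependency hypothesis is used in an essential way. If $F\neq G$ in $\F$ but $F\setminus\{a\}=G\setminus\{a\}$, then exactly one of them contains $a$; say $a\in F$, $a\notin G$. Then $F\cap S\neq\emptyset$ by the dependency condition, and since $a\notin S$ we get $(F\setminus\{a\})\cap S\neq\emptyset$, hence $G\cap S=(G\setminus\{a\})\cap S\neq\emptyset$, which forces $a\in G$ by the dependency condition again, a contradiction. This is the only subtle point; everything else is a bookkeeping check and a straightforward appeal to the inductive hypothesis.
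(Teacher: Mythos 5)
Your proposal is correct and follows essentially the same route as the paper: use Lemma~\ref{dependents} to obtain the witness pair $(a,S)$, pass to $\F\ominus\{a\}$, and show via the dependency condition that $F\mapsto F\setminus\{a\}$ is injective so that $|\F'|=|\F|$ and frequencies transfer back. The only cosmetic difference is that you package the iteration as an induction on $|U(\F)|$ and spell out the frequency-preservation step, whereas the paper simply repeats the reduction until an independent family is reached.
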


\begin{proof}
  Let $\F$ be a dependent union-closed family of sets. Then, by the
  previous proposition, there are $a\in U(\F)$ and
  $S\subseteq U(\F)\setminus\{a\}$ such that
  $\F_a=\bigcup\limits_{b\in S} \F_b$. If we consider the family
  $\F'=\F\ominus \{a\}$, we have that $|\F'|=|\F|$, because if that
  was not the case, then there would be sets $O,O\cup\{a\}\in \F$ such
  that $a\not\in O$. But then, $O\cup \{a\}\in \F_a$ implies that
  there is some $b\in S$ belonging to $O$, and so,
  $O\in \F_b\subseteq \F_a$, a contradiction. Hence, if Frankl's
  conjecture holds for $\F'$, then it holds for $\F$.  The family
  $\F'$ might not be independent, but if that is the case we continue
  this process, which will eventually stop in an independent family,
  as the cardinality of the universe decreases in each iteration.
\end{proof}

As mentioned in the introduction, some different formulations of the
union-closed sets conjecture arose in different branches of
mathematics. Out of those different formulations, one that seems very
surprising and also, so far, surprisingly unfruitful is the Salzborn
formulation that only refers to normalized families, a very restrict
subclass of union-closed families.

\begin{conjecture}[Salzborn formulation]
  \label{salzborn}
  If a finite family of sets $\F$ is normalized, then there is an
  irreducible set of size at least $\frac{|\F|}{2}$, i.e.
  $$\exists\, I\in J(\F): |I|\geq \frac{|\F|}{2}.$$
\end{conjecture}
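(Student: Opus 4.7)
The natural plan is induction on $n = |U(\F)|$, with Theorem~\ref{ogrande} as the reduction engine. The base cases $n \le 1$ are immediate: the universe is the unique non-empty irreducible set and its size already exceeds $|\F|/2$. For the inductive step fix an $n$-normalized family $\mc{N}$ and let $M$ be any minimal non-empty element of $\mc{N}$. A preliminary observation is that $M$ is automatically irreducible: if $M = G\cup H$ with $G,H\in \mc{N}$, then $G,H\subseteq M$, and minimality of $M$ among non-empty sets of $\mc{N}$ forces each of $G,H$ to be either empty or equal to $M$; since their union is non-empty, at least one equals $M$.

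The argument then splits on the size of $M$. If $|M|\ge (n+1)/2$, we are done with $I=M$. Otherwise $|M|\le n/2$, and we apply Theorem~\ref{ogrande} to obtain an $(n-1)$-normalized family $\mc{N}' = (\mc{N}\setminus\{M\})\ominus\{a_{\mc{N}}\}$. By the inductive hypothesis there is an $I'\in J(\mc{N}')$ with $|I'|\ge n/2$. The map $X\mapsto X\setminus\{a_{\mc{N}}\}$ is a bijection $\mc{N}\setminus\{M\}\to \mc{N}'$ (the cardinalities match), so there is a unique lift $I\in \mc{N}\setminus\{M\}$ with $I\setminus\{a_{\mc{N}}\} = I'$; necessarily $I = I'$ or $I = I'\cup\{a_{\mc{N}}\}$.

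To close the induction two things must be checked: (i) $I$ is still irreducible in the full family $\mc{N}$, i.e.\ no new decomposition $I = G\cup H$ appears when $M$ is reinstated; and (ii) the size bound upgrades from $|I'|\ge n/2$ to $|I|\ge (n+1)/2$. Claim (i) should follow by contradiction: any such new decomposition must involve $M$, which would force $M\subseteq I$ and induce a decomposition of $I\setminus\{a_{\mc{N}}\} = I'$ inside $\mc{N}'$, contradicting the irreducibility of $I'$ in $\mc{N}'$. Claim (ii) depends on the parity of $n$: when $n$ is odd, integrality of $|I'|$ already upgrades $|I'|\ge n/2$ to $|I'|\ge (n+1)/2$, which is enough; when $n$ is even we genuinely need to gain a full unit, and this happens only if $I = I'\cup\{a_{\mc{N}}\}$, that is, if $a_{\mc{N}}$ had been genuinely removed from the preimage of $I'$.

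The principal obstacle is the even-parity case. To handle it one would need either a refinement of Theorem~\ref{ogrande} that ties the choice of $a_{\mc{N}}$ to the (a~priori unknown) $I'$ furnished by induction, or a strengthening of the inductive hypothesis producing two incomparable irreducible sets of size $\ge n/2$ in $\mc{N}'$, so that at least one of their lifts picks up $a_{\mc{N}}$. This is precisely the point at which the Salzborn formulation meets Frankl's conjecture in full force, and a clean resolution of it would be tantamount to a complete proof of Frankl. Partial progress under auxiliary structural hypotheses on $\mc{N}$ (for instance, independence in the sense of Definition~\ref{def:independent}, or control on how $M$ sits inside the lattice of irreducibles) nonetheless looks tractable within this framework, and would sharpen the scope of the known reductions.
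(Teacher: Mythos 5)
The statement you are trying to prove is Conjecture~\ref{salzborn}, which the paper presents as a \emph{conjecture}, not a theorem: it is the Salzborn formulation of Frankl's conjecture, shown in Theorem~\ref{franklsalz} to be equivalent to Conjecture~\ref{origconjecture}, and it remains open. The paper therefore contains no proof of this statement, and no correct short proof should be expected. Your proposal is candid about where it breaks down, and that self-diagnosis is accurate: the even-parity case of step (ii) is not a technical loose end but the entire content of the problem. Knowing only that some $I'\in J(\mc{N}')$ has $|I'|\ge n/2$ gives you no control over whether its lift absorbs $a_{\mc{N}}$, and any mechanism that guaranteed this would, by your own observation, prove Frankl's conjecture outright. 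So the argument has a genuine, essential gap and cannot be completed along these lines.

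Two further points. First, step (i) is also not sound as stated: a decomposition $I=G\cup M$ in $\mc{N}$ with $G\subsetneq I$ does \emph{not} descend to a decomposition of $I'$ inside $\mc{N}'$, because $M$ is precisely the set that was deleted, so $M\setminus\{a_{\mc{N}}\}$ need not belong to $\mc{N}'$; irreducibility of $I'$ in $\mc{N}'$ therefore does not rule this out. (One can argue around it --- $M$ minimal and $a_{\mc{N}}\in M$ give $M\subseteq I$ only limited leverage --- but the inference as written is a non sequitur.) Second, your induction scheme is essentially the reduction machinery the paper actually develops: Theorem~\ref{ogrande} plus the descendant operator $\square_\downarrow$ is used in Section~\ref{frankl} to prove the conjecture for the restricted class of descendents of power sets (Theorem~\ref{descpower}), exactly the kind of ``partial progress under auxiliary structural hypotheses'' you gesture at in your last paragraph. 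If you want a provable target, that theorem, or the equivalence in Theorem~\ref{poonenequiv}, is where this strategy genuinely lands.
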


Y. Jiang proposed, in \cite{YJ09} (the link is no longer available),
the following generalization of the Frankl conjecture:

\begin{conjecture}
  \label{general}
  Let $\F$ be a union-closed family of sets such that
  $n=|U(\F)|$. Then, for any positive integer $k\leq n$, there exists
  at least one set $S\subseteq U(\F)$ of size $k$ that is contained in
  at least $2^{-k} |\F|$ of the sets in $\F$.
\end{conjecture}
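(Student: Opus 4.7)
The plan is to show that Conjecture \ref{general} is in fact equivalent to Frankl's conjecture (Conjecture \ref{origconjecture}): one direction is trivial, since the case $k=1$ of Conjecture \ref{general} is exactly Frankl's conjecture. For the other direction I would induct on $k$, using Frankl's conjecture as a black box applied repeatedly to appropriately reduced families. In this sense, Conjecture \ref{general} is a natural reformulation that makes precise the ``generalization'' advertised in the introduction.

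For the inductive step, suppose we have already produced a set $S \subseteq U(\F)$ of size $k$ with $|\mc{H}| \geq |\F|/2^k$, where $\mc{H} := \{F \in \F : S \subseteq F\}$. This family $\mc{H}$ is itself union-closed, since $S \subseteq F_1$ and $S \subseteq F_2$ imply $S \subseteq F_1 \cup F_2$. Let $\mc{G} := \mc{H} \ominus S = \{F \setminus S : F \in \mc{H}\}$. The map $F \mapsto F \setminus S$ is a bijection from $\mc{H}$ onto $\mc{G}$, so $|\mc{G}| = |\mc{H}| \geq |\F|/2^k$, and $\mc{G}$ is still union-closed, with universe $U(\mc{G}) \subseteq U(\F) \setminus S$.

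Provided $\mc{G} \neq \{\emptyset\}$, apply Frankl's conjecture to $\mc{G}$ to obtain $a \in U(\mc{G})$ with $|\mc{G}_a| \geq |\mc{G}|/2 \geq |\F|/2^{k+1}$. Since $a \in U(\mc{G}) \subseteq U(\F) \setminus S$, we have $a \notin S$, so $S' := S \cup \{a\}$ has size $k+1$. Undoing the bijection identifies $\mc{G}_a$ with $\{F \in \F : S' \subseteq F\}$, giving $|\{F \in \F : S' \subseteq F\}| \geq |\F|/2^{k+1}$, as required. The remaining case, $\mc{G} = \{\emptyset\}$ (equivalently, $\mc{H} = \{S\}$), forces $|\F| \leq 2^k$, so the required lower bound $|\F|/2^{k+1}$ is at most $1/2$; since $k+1 \leq n$ and $U(\F) \in \F$, any $(k+1)$-subset of $U(\F)$ then works as $S'$, being contained in $U(\F)$ itself.

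The only nontrivial point---and the sole real obstacle---is to apply Frankl not to $\mc{H}$ directly, but to $\mc{H} \ominus S$. Applied to $\mc{H}$, Frankl's conjecture would merely produce \emph{some} frequent element, possibly one already in $S$ (indeed, each element of $S$ lies in every set of $\mc{H}$), which would be useless for extending $S$. Stripping $S$ out of each set forces the universe of the resulting family to be disjoint from $S$, which guarantees that the element returned by Frankl's conjecture genuinely extends $S$ to a set of size $k+1$, while the bijection ensures that no density is lost in the transition.
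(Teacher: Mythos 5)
Your argument is correct and is essentially the paper's own proof: the paper likewise inducts on $k$, forms $\mc{G}=\{F\in\F:S_k\subseteq F\}$, passes to $\mc{G}\ominus S_k$ precisely so that the element supplied by Frankl's conjecture lies outside $S_k$, and sets $S_{k+1}=S_k\cup\{x\}$. Your explicit handling of the degenerate case $\mc{G}\ominus S=\{\emptyset\}$ is a minor point the paper leaves implicit.
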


This conjecture is, in principle, not easier to prove, but it might be
useful in finding eventual counterexamples to the problem. It turns
out that it is, in fact, equivalent to Frankl's conjecture.  The
equivalence is not hard to see, but since we have not found it stated
in the literature, we include it here.

\begin{proposition}
  Let $\F$ be a union-closed family of sets with $n=|U(\F)|$. If the
  union-closed sets conjecture holds, then for any positive integer
  $k\leq n$ there are sets $S_k\subseteq U(\F)$ such that $|S_k|=k$,
  $S_k\subseteq S_{k+1}$, and such that $S_k$ is contained in at least
  $2^{-k} |\F|$ of the sets in $\F$. In particular, in that case,
  Conjecture~\ref{general} also holds.
\end{proposition}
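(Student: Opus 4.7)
The plan is to proceed by induction on $k$, building the chain $S_1\subseteq S_2\subseteq\cdots\subseteq S_n$ one element at a time so that at each stage $S_k$ is contained in at least $|\F|/2^k$ sets of $\F$. For the base case $k=1$, we simply invoke Frankl conjecture on $\F$ to obtain an element $a_1\in U(\F)$ with $|\F_{a_1}|\geq|\F|/2$, and set $S_1:=\{a_1\}$.

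For the inductive step, assume $S_k$ has been constructed with $k<n$, and let $\F^{(k)}:=\{F\in\F:S_k\subseteq F\}$. Since unions of sets containing $S_k$ still contain $S_k$, the family $\F^{(k)}$ is union-closed, and by hypothesis $|\F^{(k)}|\geq|\F|/2^k$. Contract it to $\mc G:=\F^{(k)}\ominus S_k$; because the map $F\mapsto F\setminus S_k$ is a bijection on $\F^{(k)}$, we have $|\mc G|=|\F^{(k)}|$, $\mc G$ is union-closed, and crucially $U(\mc G)\subseteq U(\F)\setminus S_k$. If $|\mc G|\geq 2$, then $\mc G\neq\emptyset,\{\emptyset\}$, so Frankl conjecture applied to $\mc G$ yields some $b\in U(\mc G)$ with $|\mc G_b|\geq|\mc G|/2\geq|\F|/2^{k+1}$. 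Since $b\notin S_k$, setting $S_{k+1}:=S_k\cup\{b\}$ gives a set of size $k+1$ containing $S_k$, and reversing the contraction shows that $\{F\in\F:S_{k+1}\subseteq F\}$ is in bijection with $\mc G_b$, delivering the required cardinality.

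The only point needing care is the degenerate case $|\F^{(k)}|=1$, in which Frankl cannot be invoked on $\mc G$. Because $U(\F)\in\F$ always and $S_k\subseteq U(\F)$, we then have $\F^{(k)}=\{U(\F)\}$, and the inductive hypothesis forces $|\F|\leq 2^k$, whence the target bound $|\F|/2^{k+1}\leq 1/2$. Picking any $b\in U(\F)\setminus S_k$ (which exists as $k<n$) and setting $S_{k+1}:=S_k\cup\{b\}$ then suffices, since $U(\F)$ itself contains $S_{k+1}$ and $1\geq|\F|/2^{k+1}$. The main obstacle is thus conceptual rather than technical: recognizing that the contracted family $\mc G$, rather than $\F^{(k)}$ itself, is the right object on which to reapply Frankl, so that the element produced is automatically disjoint from $S_k$. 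Once that observation is in place, the induction runs smoothly, and the final statement about Conjecture~\ref{general} follows by taking any $S_k$ produced by the chain.
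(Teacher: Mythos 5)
Your proof is correct and follows essentially the same route as the paper: induct on $k$, restrict to the subfamily $\{F\in\F: S_k\subseteq F\}$, contract it by $\ominus S_k$, and apply Frankl conjecture to the contracted union-closed family so that the new element automatically lies outside $S_k$. The only difference is that you explicitly dispose of the degenerate case $|\F^{(k)}|=1$ (where the contracted family is $\{\emptyset\}$ and Frankl cannot be invoked), a case the paper's proof silently passes over, so your write-up is if anything slightly more careful.
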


\begin{proof}
  We proceed by induction on $k$. The case $k=1$ is trivial, since in
  this case Conjecture~\ref{general} reduces to the union-closed sets
  conjecture. Now, assume that we have some set $S_k\subseteq U(\F)$
  such that $S_k$ is contained in at least $2^{-k} |\F|$ of the sets
  in $\F$ and consider the family $\mc{G}=\{F\in\F:S_k\subseteq
  F\}$. We know that $|\mc{G}|\geq 2^{-k}|\F|$, and it is clear that
  $\mc{G}$ is union-closed. Now, take the family
  $\mc{G}\ominus S_k=\{G\setminus S_k: G\in\mc{G}\}$. This new family
  is still union-closed: if $A,B\in \mc{G}\ominus S_k$, then
  $(A\cup S_k)\cup (B\cup S_k)=(A\cup B)\cup S_k\in\mc{G}$, and
  $A\cup B \in\mc{G}\ominus S_k$. Also, clearly,
  $|\mc{G}\ominus S_k|=|\mc{G}|$. Since we assume the union-closed
  sets conjecture is valid, we know there exists an element
  $x\in U(\mc{G}\ominus S_k)\subseteq U(\F)\setminus S_k$ such that
  $x$ is in at least
  $\frac{|\mc{G}\ominus S_k|}{2}=\frac{\mc{|G|}}{2}\geq
  2^{-(k+1)}|\F|$ sets. Now just take $S_{k+1}=S_k\cup\{x\}$. We have
  $|S_{k+1}|=k+1$, and $S_{k+1}$ is contained in at least
  $2^{-(k+1)} |\F|$ of the sets in $\F$.
\end{proof}
\section{Generalized topologies and Frankl's conjecture}
\label{GenTop}

A set $X$ together with a union-closed family $\F\subseteq \mc P(X)$
was named a \emph{supratopological space} when $X\in\F$ in
\cite{Mashhour}; a \emph{generalized topological space} if
$\emptyset\in\F$ in \cite{Csaszar02}; and a \emph{strong generalized
  topological space}, e.g.~in \cite{PankajamSivaraj}, when
$\emptyset, X\in\F$.  Many topological notions can readily be extended
to this more general setting. For example, and this will be relevant
later, calling the elements of $\F$ \emph{open sets}, and their
complements \emph{closed sets}, the \emph{interior} of a subset
$A\subseteq X$ is, as usual, the biggest open set contained in $A$,
i.e.~the set $A^\circ = \bigcup\{O\in\F: O\subseteq A\}$, which is
open, as $\F$ is union-closed. Note that, trivially,
$A^{\circ\circ} = A^\circ$. Similarly, the \emph{closure} of $A$ is
the smallest closed set containing $A$, which is
$\bar{A} = \bigcap \{C \text{ closed}: A\subseteq C\} =
X\setminus(X\setminus A)^\circ$.

In this setting, the union-closed sets conjecture has the following
reformulation.
\newtheorem*{origconjecture}{Conjecture~\ref{origconjecture}}
\begin{origconjecture}[Frankl's conjecture, topological reformulation]
  If $(X,\F)$ is a finite supratopological space, then there is a
  point that belongs to at least half the open sets.
\end{origconjecture}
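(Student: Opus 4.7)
The plan is to verify that the topological reformulation is simply a repackaging of Conjecture~\ref{origconjecture} under the identification $X = U(\F)$; no new mathematical content is required beyond unwinding the definitions carefully in both directions.

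In one direction, suppose $\F$ is a finite union-closed family with $\F \neq \emptyset, \{\emptyset\}$. Repeated application of the binary union-closure property yields $U(\F) = \bigcup_{F \in \F} F \in \F$, so, setting $X := U(\F)$, the pair $(X, \F)$ is a finite supratopological space. Moreover $X$ is nonempty, since $\F \neq \{\emptyset\}$ forces some $F \in \F$ to contain a point. An element $a \in U(\F)$ with $|\F_a| \geq |\F|/2$ is, by definition, a point of $X$ belonging to at least half of the open sets, so the topological statement applied to $(X,\F)$ immediately delivers the conclusion of Conjecture~\ref{origconjecture} for $\F$.

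Conversely, given a finite supratopological space $(X, \F)$, the axiom $X \in \F$ together with $F \subseteq X$ for every $F \in \F$ gives $U(\F) = X$ at once. Assuming $X$ is nonempty (otherwise no point can belong to any open set, and the statement has no content), the family $\F$ satisfies the hypotheses of the original Frankl conjecture, and the distinguished element $a \in U(\F)$ it provides is precisely a point of $X$ lying in at least half the open sets.

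Since the correspondence is exact in both directions, ``proving'' the topological reformulation amounts to recording this dictionary. There is no real obstacle; the only point worth noting is that the supratopology axiom $X \in \F$ is automatic for any nonempty finite union-closed family, which is why no extra hypothesis on $\F$ appears in the reformulation. The genuine work of the paper will lie in subsequent sections, where the supratopological language is leveraged to isolate structural features (separation axioms, interiors and closures) that let one reduce Frankl's conjecture to smaller or more constrained classes of spaces.
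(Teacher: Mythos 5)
Your unwinding of the dictionary between union-closed families with $\F \neq \emptyset, \{\emptyset\}$ and nonempty finite supratopological spaces is correct, and it matches the paper, which likewise presents this statement as a direct reformulation of Conjecture~\ref{origconjecture} with no further argument. You rightly flag the only delicate points: that $U(\F)\in\F$ makes the axiom $X\in\F$ automatic, and that the degenerate case $X=\emptyset$ must be excluded for the equivalence to be exact.
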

\noindent Note that, in this context, $\F_x$ is the set of all
neighborhoods of $x$, and one can also state the Frankl conjecture as:
there is a point whose neighborhoods consist of at least half of all
open sets.

There are several separation axioms that are quite pertinent for our
purposes, specially axioms between $T_0$ and $T_1$, that we will
shortly recall from \cite{AullThron}, to which we join the axiom that
we introduced above, in Definition~\ref{def:independent}, and we
separate as an axiom a condition in the definition of $\T{DD}$ from
\cite{AullThron}. Those axioms originated in the context of
topological spaces, but they carry over to supratopological spaces
without changes. However, there are relations among them that no
longer hold, as we will point out. In particular, some do not remain
between $T_0$ and $T_1$.

To state the axioms, it is convenient to denote the closure of $\{x\}$
by $\bar{x}$, for any given point $x\in X$ of a supratopological
space, and we will call the \emph{shadow}\footnote{In \cite{AullThron}
  this set is denoted by $[x]'$ and called the derived set of $x$.} of
$x$ to the set $\dot{x}=\bar{x}\setminus\!\{x\}$. It is easy to see
that, with our notations,
$\bar{x} = \{y\in X: \F_y\subseteq \F_x\} = X\setminus \U{x}$. Also,
following \cite{AullThron}, given two subsets, $A$ and $B$, of a
supratopological space $X$, we say that $A$ is \emph{weakly separated}
from $B$, which we denote by $A\longleftfootline B$, if there is an
open set $O$ of $X$ such that $A\subseteq O$ and
$B\cap O = \emptyset$. When dealing with a singular set, we will often
write $x$ for $\{x\}$. It is easy to see that
$\bar{x} = \{y\in X: y \not\vlongdash x\}$.

In \cite{AullThron}, the set
$\hat{x} = \{y\in X: x \not\vlongdash y\}$ is called the \emph{kernel}
of $x$, and $\shell{x} = \hat{x}\setminus\!\{x\}$ the \emph{shell} of
$x$. It is easy to see that
$\hat{x} = \{y\in X: \F_x \subseteq \F_y\} = \{y\in X: x\not\in
\U{y}\}$, the set of all elements $y$ that \emph{dominate} $x$, in the
language of \cite{BS13}.

We prefer the name ``supratopological'' because it directly suggests
that the family of open sets is union-closed, but, \textbf{from now
  on}, to simplify matters and because it really does not make much
difference, we assume that a supratopological space always contains
the empty set.

\begin{definition}[Separation Axioms]
  A supratopological space $(X,\F)$ is
  \begin{itemize}
  \item $T_0$ if, for any $x\neq y\in X$, either $x\vlongdash y$ or
    $y\vlongdash x$, which is equivalent to $\F_x\neq \F_y$.
  \item $\T{\mc I}$ if $\F$ is independent, as specified above in
    Definition~\ref{def:independent}. Of course,
    $\T{\mc I}\subseteq T_0$.
  \item $\T{UD}$ if $\dot{x}$ is a union of disjoint closed sets, for
    all $x\in X$. This is equivalent to require that, for all
    $x\in X$, $\U{x}\cup\{x\}$ is a intersection of open sets whose
    pairwise union is $X$.
  \item $\T{D}$ if $\dot{x}$ is closed, for all $x\in X$. This is
    equivalent to demand that, for all $x\in X$, there is $O\in\F$
    such that $O\cup\{x\}\in\F$, which turns out to be equivalent to
    $\U{x}\cup\{x\}\in\F$. Clearly, $\T{D}\subseteq\T{UD}$. 
  \item  $\T{iD}$ if, for all $x\neq y\in X$,
    $\dot{x}\cap\dot{y} = \emptyset$, which is equivalent to
    $\{x,y\}\cup\U{x}\cup\U{y} = X$.
  \item $\T{DD} = \T{D} \cap \T{iD}$.
  \item $\T{F}$ if, for all $x\in X$ and finite
    $S\subseteq X\setminus \{x\}$, either $x\vlongdash S$ or
    $S\vlongdash x$.
  \item $\T{FF}$ if, for any pair of finite disjoint sets
    $S_1,S_2 \subseteq X$, either $S_1\vlongdash S_2$ or
    $S_2\vlongdash S_1$. Of course, $\T{FF}\subseteq
    \T{F}$.
  \item $\T{Y}$ if, for all $x\neq y\in X$, one has
    $|\bar{x}\cap \bar{y}|\leq 1$. This is equivalent to either
    $\U{x}\cup\U{y} = X$, or $\U{x}\cup\U{y} = X\setminus\{z\}$ for
    some $z\in X$, for each pair $x\neq y$.
  \item $\T{YS}$ if, for all $x\neq y\in X$, one has
    $\bar{x}\cap \bar{y} \in \{\emptyset, \{x\}, \{y\}\}$. This is
    equivalent to
    $\U{x}\cup\U{y} \in\{X, X\setminus\{x\}, X\setminus\{y\}\}$. It is
    clear that $\T{YS}\subseteq \T{Y} \cap \T{iD}$.
  \item $T_1$ if, for all $x\neq y\in X$, one has
    $\F_x\setminus\F_y\neq \emptyset$. This is equivalent to $\{x\}$
    is closed, for all $x\in X$, or $\U{x} = X\setminus\{x\}$. It
    immediately follows that $T_1\subseteq \T{YS}$.
  \end{itemize}
\end{definition}

\noindent\textbf{Examples:}\label{examples}
\begin{itemize}
\item The \emph{indiscrete supratopology} on $X$, given by
  $\F=\{\emptyset,X\}$, is not $T_0$ when $|X| \geq 2$, but it is
  $\T{iD}$ when $|X|=2$, and thus $\T{iD}\not\subseteq T_0$.
\item Let us denote by $\binom{X}{k}$ the set of all subsets of the
  set $X$ with $k$ elements, and by $\binom{X}{\geq k}$ the set of all
  such subsets with at least $k$ elements, which is a union-closed
  family.  Note that
  $\langle \binom{X}{k}\rangle = \binom{X}{\geq k}\cup
  \{\emptyset\}$. The supratopological space given by
  $\left([4],\langle\binom{[4]}{2}\rangle\right)$ is a finite
  non-discrete $T_1$ space, something that cannot exist in the
  topological case.
\item The $n$-staircase family $\{\emptyset, [1],\ldots, [n]\}$ is
  $\T{D}$ but not $T_1$.
\item The space $X=[4]$ with the supratopology given by
  $\F=\langle\{1,2\},\{3,4\}\rangle$ is $\T{iD}$ but not $\T{UD}$.
\item For $n\geq 3$, the family
  $\{\emptyset\}\cup \{[n]\setminus\{a\}: a\in\{2,3,\ldots,n\}\} \cup
  \{[n]\}$ is $\T{UD}$ but not $\T{D}$.
\item The space $X=[5]$ with the supratopology given by
  $$\F=\{\emptyset,\{1,2,3\},\{1,4,5\},\{1,2,3,4,5\}\}$$
  is $\T{UD}$ but not $T_0$.
\item The space $X=[3]$ with the supratopology given by
  $\F=\{\emptyset, \{1,2\},\{1,3\},\{1,2,3\}\}$ is $\T{YS}$ but not
  $\T{\mc I}$.
\item The space $X=[4]$ with the supratopology given by the family
  $$\F=\{\emptyset, \{1,2\}, \{2,3\}, \{1,2,3\}, \{1,2,4\}, \{1,3,4\},
  \{1,2,3,4\}\}$$
  is $\T{DD}$ but not $\T{F}$.
\end{itemize}

\noindent\textbf{Remarks:}
\begin{itemize}
\item The notion of $T_0$ corresponds exactly what it is called
  ``separating'' in the union-closed literature, as we did in the
  previous section.  One can now give a topological proof of
  Lemma~\ref{iffseparating}: when $(X,\F)$ is $T_0$, the map $X\to\F$
  given by $x\mapsto X\setminus\bar{x}$ is injective.
\item If $X$ is not $T_0$, then there exist $x\neq y\in X$ such that
  $\bar{x} = \bar{y}$, and hence
  $\{x,y\} \subseteq \bar{x} \cap \bar{y}$. This shows that
  $\T{Y}\subseteq T_0$.
\item We will see below that $\T{D}\subseteq \T{\mc I} \subseteq T_0$.
\item $\T{DD} = \T{iD}\cap\T{D}$, by definition, and
  $\T{DD}= \T{iD}\cap\T{\mc I}$, by
  Proposition~\ref{TiDandTIimpliesTDD} below.
\item $T_1\not\subseteq \T{FF}$: the space $X=[4]$ with
  $\F = \langle\binom{[4]}{3}\rangle$ is $T_1$ but not $\T{FF}$.
\item $\T{iD} \cap T_0\subseteq \T{YS}$.
\item For topological spaces, one has the following relations (see
  \cite{AullThron}):
  $$T_1\subsetneq \T{DD} \subsetneq \T{D} \subsetneq \T{UD} \subsetneq
  T_0;$$
  $$T_1\subsetneq \T{FF} \subsetneq \T{Y} \subsetneq \T{F}
  \subsetneq \T{UD};$$
  $$T_1\subsetneq \T{DD} \subsetneq \T{YS} \subsetneq \T{Y}.$$
  Some of these do not hold for supratopological spaces, as they
  depend on the fact that the intersection of open sets is still open,
  which is not required to hold in a supratopological space. One can
  see that in this case the relations are as depicted in
  Figure~\ref{SeparationAxioms},
  \begin{figure}
    $$
    \begin{tikzcd}
      &  & T_0 \arrow[d,dash]& & \\
      &&\T{\mc I}& \T{UD}  \arrow[dl,dash]\\
      &&\T{D} \arrow[u,"(4)",dash]\arrow[dr,"(3)",dash]& \\
      \T{Y}\arrow[dr,dash]\arrow[uuurr,dash] &\T{iD}
      \arrow[d,dash]\arrow[dr,dash] &&\T{F}
      \arrow[d,dash] \arrow[ddl,"(2)",dash]\\
      &\T{YS}\arrow[dr,dash]
      &\T{DD}\arrow[uu,dash]&\T{FF}\\
      && T_1 \arrow[u,"(1)",dash]&
    \end{tikzcd}
    $$
    \caption{Hasse diagram for the separation axioms in
      supratopological spaces}
    \label{SeparationAxioms}
  \end{figure}
  where all inclusions are straightforward by the definitions
  involved, except:
  \begin{itemize}
  \item (1) holds when $\emptyset\not\in\F$, which we assumed to be
    the case in supratopological spaces.
  \item (2) and (3) follow easily from the fact that in a $\T{F}$
    space all points are either open or closed, which can be proved in
    an analogous way as Proposition~\ref{tff} below.
  \item (4) is the content of Proposition~\ref{TDimpliesTI} below.
  \end{itemize}
  We leave to the reader the verification that the examples given
  above show that there are no extra line segments in the diagram, as
  well as to verify that all inclusions are indeed strict.
\end{itemize}

It is well known that it suffices to verify Frankl's conjecture for
$T_0$ families. In fact, it is easy to see that one can suppose the
union-closed family to have, for every $a\in U(\F)$, an
\emph{$a$-problematic} set, which is a set $O\in \F$ such that
$O\cup \{a\}\in \F$. The paper by Lo Faro \cite{LF94} can be used as a
reference for this result. This means:
\begin{proposition}
  To prove the Frankl conjecture, it suffices to do it for $\T{D}$
  supratopological spaces.
\end{proposition}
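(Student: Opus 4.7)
The plan is an induction on $|U(\F)|$, peeling off one by one the elements for which the $\T{D}$ condition fails. Recall that $(X,\F)$ is $\T{D}$ iff every $a\in X$ admits an $a$-problematic set, i.e.\ some $O\in\F$ with $O\cup\{a\}\in\F$ (equivalently, $\U{a}\cup\{a\}\in\F$).

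Suppose $\F$ is union-closed but not $\T{D}$, and pick $a\in U(\F)$ with no $a$-problematic set. Let $\F'=\F\ominus\{a\}$. Then $\F'$ is union-closed, since $(F_1\setminus\{a\})\cup(F_2\setminus\{a\})=(F_1\cup F_2)\setminus\{a\}$, and $U(\F')=U(\F)\setminus\{a\}$ is strictly smaller. The key claim is that $F\mapsto F\setminus\{a\}$ is a bijection $\F\to\F'$, and hence $|\F'|=|\F|$: if $F_1\neq F_2$ in $\F$ agreed outside $\{a\}$, then exactly one of them would contain $a$, making the other an $a$-problematic set and contradicting our choice of $a$. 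Invoking the inductive hypothesis on $\F'$, we obtain $b\in U(\F')\subseteq U(\F)\setminus\{a\}$ with $|\F'_b|\geq|\F'|/2=|\F|/2$. Since $b\neq a$, the bijection preserves $b$-membership, hence $|\F_b|=|\F'_b|\geq|\F|/2$, establishing Frankl for $\F$. The Frankl hypothesis $\F\neq\emptyset,\{\emptyset\}$ transfers to $\F'$ since $|\F'|=|\F|\geq 2$, so the induction is well-founded.

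I do not anticipate a serious obstacle here: the argument pivots on the single injectivity observation, which captures exactly why an absent $a$-problematic set prevents collisions under the removal map. The remaining work is routine bookkeeping to transfer the Frankl witness through each reduction step.
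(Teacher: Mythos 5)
Your argument is correct. Note that the paper does not actually write out a proof of this proposition --- it is presented as well known, with a pointer to Lo Faro \cite{LF94} --- so there is no in-paper proof to match; but your removal argument is the standard one, and it is essentially the same device the authors do spell out in their proof of Proposition~\ref{indep enough}: pass to $\F'=\F\ominus\{a\}$, observe that the absence of an $a$-problematic set forces $F\mapsto F\setminus\{a\}$ to be injective so that $|\F'|=|\F|$ while the universe strictly shrinks, and lift a Frankl witness $b\neq a$ back through the bijection. Two points of hygiene. First, in your opening recap you omit the requirement $a\notin O$ from the definition of an $a$-problematic set; without it, any $O\in\F_a$ trivially satisfies $O\cup\{a\}\in\F$ and the $\T{D}$ condition becomes vacuous, so you should insist on $O\in\F_{\tilde a}$ (your actual use of the notion in the injectivity step --- the member of the colliding pair \emph{not} containing $a$ --- is the correct one, so nothing breaks). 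Second, the assertion $|\F'|=|\F|\geq 2$ is not automatic from the Frankl hypothesis alone (a family such as $\{\{1\}\}$ has one set); either invoke the paper's standing convention that $\emptyset\in\F$ for supratopological spaces, or dispose of one-set families separately, where the conjecture is trivial. Neither point affects the substance of the argument.
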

We can go even further and assume $\F$ has at least 3 $a$-problematic
sets for each $a\in U(\F)$ (cf.~\cite[Corollary 3.1.11]{Car16}):

\begin{theorem}
  It suffices to prove the Frankl Conjecture for families $\F$ having
  at least three $a$-problematic sets for each $a\in U(\F)$.
\end{theorem}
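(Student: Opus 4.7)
My plan is to mirror the reduction used in the preceding proposition, now pushing from ``at least one $a$-problematic set'' up to ``at least three.'' Given a union-closed $\F$ in which some $a \in U(\F)$ has $p_a := |\mc{P}_a| \in \{1,2\}$, where $\mc{P}_a := \{O \in \F_{\tilde a} : O \cup \{a\} \in \F\}$, I would construct a strictly smaller union-closed $\F'$ whose Frankl property still implies Frankl for $\F$, and then iterate. The enabling structural observation is that $\mc{P}_a$ is itself union-closed:
$$(O_1 \cup O_2) \cup \{a\} \;=\; (O_1 \cup \{a\}) \cup (O_2 \cup \{a\}) \;\in\; \F,$$
so $O_1 \cup O_2 \in \mc{P}_a$. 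Hence $\mc{P}_a$ has a maximum element $M$, and both $M$ and $M \cup \{a\}$ lie in $\F$.

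In the case $p_a = 1$, where $\mc{P}_a = \{M\}$, I would set $\F' := \F \setminus \{M \cup \{a\}\}$. Verifying union-closedness amounts to showing $M \cup \{a\}$ is $\F$-irreducible: any decomposition $M \cup \{a\} = F_1 \cup F_2$ with $F_i \neq M \cup \{a\}$ forces $a$ into some $F_i = A \cup \{a\}$ with $A \subsetneq M$; if $A \in \F$, then $A$ would be an $a$-problematic set distinct from $M$, contradicting $p_a = 1$, and the remaining sub-case is ruled out by applying $\T{D}$ to an element of $M\setminus A$. Then $|\F'| = |\F|-1$, and a Frankl witness $b$ for $\F'$ gives $|\F_b| \ge |\F'_b|$, with an extra $+1$ when $b \in M \cup \{a\}$ compensating for the removed set. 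The case $p_a = 2$ proceeds analogously: $\mc{P}_a = \{O, M\}$ with $O \subsetneq M$, and one removes either $\{M \cup \{a\}\}$ alone, or the paired set $\{M, M\cup\{a\}\}$, then repeats the irreducibility and transfer arguments using the constrained form of any $\F$-decomposition of the removed sets.

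The main obstacle I anticipate is the transfer of the Frankl witness from $\F'$ back to $\F$ when $|\F|$ is odd and the witness $b$ produced by the induction hypothesis happens to lie outside $M \cup \{a\}$: the direct bound $|\F_b| = |\F'_b| \ge (|\F|-1)/2$ then falls short of $|\F|/2$ by one as integer counts. I would handle this either by choosing $\F'$ so that $|\F'|$ and $|\F|$ have the same parity (removing matched pairs of the form $(O_i, O_i\cup\{a\})$ instead of a single set), or by a structural argument that, when $p_a \le 2$, any Frankl witness for $\F'$ can be chosen inside $M \cup \{a\}$, exploiting the fact that elements outside $M$ behave identically in $\F$ and $\F'$. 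Combined with the previous proposition, which disposes of the case $p_a = 0$, iteration of this reduction terminates in a family satisfying the conclusion of the theorem.
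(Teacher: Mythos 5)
Your reduction has two genuine gaps, and the paper's proof avoids both by taking a completely different route. First, the irreducibility claim for $M\cup\{a\}$ is false, even for $\T{D}$ families. In a decomposition $M\cup\{a\}=F_1\cup F_2$ with $a\in F_1=A\cup\{a\}$, you argue that $A$ would be a second $a$-problematic set; but that requires $A\in\F$, and you only know $A\cup\{a\}\in\F$. Likewise $F_2\subseteq M$ need not be $a$-problematic, since $F_2\cup\{a\}$ need not lie in $\F$. Concretely, take $\F=\langle\{1,2\},\{2,3,4\},\{3\},\{4\}\rangle=\{\emptyset,\{3\},\{4\},\{3,4\},\{1,2\},\{1,2,3\},\{1,2,4\},\{2,3,4\},\{1,2,3,4\}\}$: every element has an $a$-problematic set, $\Pi_1=\{\{2,3,4\}\}$ so $p_1=1$ and $M=\{2,3,4\}$, yet $M\cup\{1\}=\{1,2\}\cup\{3,4\}$ is reducible, and deleting it destroys union-closedness. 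Second, the witness-transfer problem you flag at the end is not a loose end but the heart of the matter: when $|\F|$ is odd, a witness $b$ for $\F'$ with $b\notin M\cup\{a\}$ only gives $|\F_b|\geq(|\F|-1)/2<|\F|/2$. Neither proposed fix closes this: removing a matched pair preserves parity but lowers the threshold by one while the witness may still miss both removed sets, and no argument is offered that the witness can be forced into $M\cup\{a\}$.

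The paper's proof sidesteps both problems by arguing on a minimal counterexample rather than by an explicit Frankl-preserving reduction. It takes $\F$ minimal first in $|\F|$ and then in $|U(\F)|$, passes to $\F'=\F\ominus\{a\}$ --- a quotient, not a subfamily, so union-closedness is automatic and $|\F'|=|\F|-|\Pi_a|$ --- and invokes minimality to obtain a witness $b$ with $|\F'_b|\geq(|\F|-|\Pi_a|)/2$. The key counting step is that, $\F$ being a counterexample, $|\F_{\tilde a}|>|\F|/2\geq|\F'|/2$, so $b$ must lie in $U(\F_{\tilde a})$, which is itself an $a$-problematic set; hence $|(\Pi_a)_b|\geq 1$. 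Feeding this into $|\F_b|=|\F'_b|+|(\Pi_a)_b|<|\F|/2$ yields $|\Pi_a|>2|(\Pi_a)_b|\geq 2$. Note how the strict inequality coming from the counterexample hypothesis absorbs exactly the parity slack that defeats your direct transfer. If you want to repair your argument, the viable move is to replace set deletion by the quotient $\F\ominus\{a\}$ and to work contrapositively with a minimal counterexample.
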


\begin{proof}
  Suppose that the Frankl Conjecture does not hold, and let $\F$ be a
  minimal counterexample to the conjecture with respect to the number
  of sets and with minimal universe size among the counterexamples
  with $|\F|$ sets. Let $a\in U(\F)$ and
  $$\Pi_a=\{O\in \F_{\neg a}\mid O\cup\{a\}\in \F\}$$ be the
  subfamily of $a$-problematic sets of $\F$.  By minimality of $\F$,
  we know that Frankl's Conjecture holds for $\F'=\F\ominus\{a\}$, and
  so, $|\F'|=|\F|-|\Pi_a|<|\F|$ and $\Pi_a\neq\emptyset$.  This
  implies that $\U{a}\in\Pi_a$, as, taking $O\in\Pi_a$, we have that
  $\U{a}\cup \{a\}=\U{a}\cup (O\cup\{a\})\in \F$.  Let $b\in U(\F')$
  be an element belonging to at least $\frac{|\F'|}{2}$ sets of $\F'$,
  i.e., such that
  \begin{align}
    \label{ineq1}
    |\F_b'| \geq \frac{|\F|-|\Pi_a|}{2}.
  \end{align}
  As the element $b$ belongs to at least half the sets in $\F'$ and
  $\F_{\neg a}\subseteq \F'$ is a subfamily such that
  $|\F_{\neg a}|\geq \frac{|\F'|}{2}$, $b$ must belong to
  $ \U{a}\in\Pi_a$. In particular, $|(\Pi_a)_b|\geq 1$.

  Since $\F$ is a counterexample to Frankl's Conjecture, we have that
  \begin{align}
    \label{ineq2}
    |\F_b'| + |(\Pi_a)_b|=|\F_b|<\frac{|\F|}{2}.
  \end{align}
  Combining (\ref{ineq1}) and (\ref{ineq2}), we get that
  $$\frac{|\F|-|\Pi_a|}{2}+|(\Pi_a)_b|<\frac{|\F|}{2},$$ 
  and so $|\Pi_a|>2|(\Pi_a)_b| \geq 2.$
\end{proof}

Notice that the existence of an $a$-problematic set for each
$a\in U(\F)$ implies separation, since for $a,b\in U(\F)$,
taking $O,O\cup\{a\}\in \F$, if $b\in O$, then $O$ separates $a$
from $b$; if not then $O\cup\{a\}$ does. That is,
$\T{D}\subseteq T_0$.  The concept of independence lies somewhere
between $T_0$ and $\T{D}$, as we will now see, and which provides an
alternative way to obtain the result in Proposition~\ref{indep
  enough}.

\begin{proposition}
  \label{TDimpliesTI}
  Every $\T{D}$ union-closed family is independent,
  i.e.~$\T{D}\subseteq\T{\mc I}$.
\end{proposition}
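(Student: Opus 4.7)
The plan is to unpack the two defining conditions and show that the $\T{D}$ hypothesis produces exactly the witness needed for independence. Fix an arbitrary $a\in U(\F)$ and an arbitrary $S\subseteq U(\F)\setminus\{a\}$; I need to verify that at least one of the two alternatives in Definition~\ref{def:independent} holds. The natural strategy is to assume the first alternative fails and then construct, using $\T{D}$, a set in $\F_a$ that is disjoint from $S$.

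Suppose there is no $O\in\F_{\tilde a}$ meeting $S$. By definition, every set in $\F_{\tilde a}$ is then contained in $U(\F)\setminus S$, and therefore so is their union, which is exactly $\U{a}$. Hence $\U{a}\cap S=\emptyset$. Now invoke the characterization of $\T{D}$ recalled in the definition list: $\U{a}\cup\{a\}\in\F$. This set contains $a$, so it lies in $\F_a$, and since $a\notin S$ together with $\U{a}\cap S=\emptyset$, one has $(\U{a}\cup\{a\})\cap S=\emptyset$. This is precisely the second alternative in the definition of independence, so $\F$ is independent.

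There is essentially no obstacle here: the argument is a direct unwrapping of definitions once one recognizes that $\U{a}$ is itself an element of $\F$ (this is the union-closure used implicitly) and that the $\T{D}$ axiom upgrades it to $\U{a}\cup\{a\}\in\F$. The only mildly delicate point is the degenerate case $\F_{\tilde a}=\emptyset$, but under the standing convention that $\emptyset\in\F$ one still has $\emptyset\in\F_{\tilde a}$, so $\U{a}=\emptyset$ is well-defined and the argument goes through with $O=\{a\}$.
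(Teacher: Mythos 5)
Your proof is correct and is essentially the paper's argument run in the forward direction: the paper assumes dependence (characterized via Lemma~\ref{dependents} as $\F_a=\bigcup_{b\in S}\F_b$) and shows the $a$-problematic set required by $\T{D}$ cannot exist, whereas you assume the first alternative of independence fails and use the equivalent formulation $\U{a}\cup\{a\}\in\F$ of $\T{D}$ to exhibit the witness for the second alternative. Your treatment of the degenerate case $\U{a}=\emptyset$ and the observation that $a\notin S$ are both fine, so nothing is missing.
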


\begin{proof}
  Suppose that $\F$ is dependent. In view of
  Lemma~\ref{dependents}, there are $a\in U(\F)$ and
  $S\subseteq U(\F)\setminus\{a\}$ such that
  $\F_a=\bigcup\limits_{b\in S} \F_b$. This means that $\F$ has no
  $a$-problematic sets, since, if there was an $a$-problematic set
  $O$, then there would be some $b\in S$ belonging to $O\cup\{a\}$,
  which means that $b\in O$ and that contradicts the fact that
  $\F_b\subseteq \F_a$. Therefore, $\F$ is not a $\T{D}$
  family.
\end{proof}

It is proved in \cite[Corollary 2]{LoF94} that, if $\F$ is a minimal
counterexample to the Frankl Conjecture, then, for all $z\in U(\F)$,
$1\leq |\hat{z}|\leq 2$, i.e. $|\shell{z}|\leq 1$. This implies that
$\F$ satisfies the axiom $\T{iD}$: given $x,y\in U(\F)$, we have that
for every element $z\in U(\F)\setminus\{x,y\}$, it cannot happen that
both $x$ and $y$ belong to $\shell{z}$, thus $z\in \U{x}\cup\U{y}$,
and so $\{x,y\}\cup\U{x}\cup\U{y}=U(\mc F)$.  This shows:
\begin{proposition}
  It suffices to prove the Frankl conjecture for $\T{iD}$ families
\end{proposition}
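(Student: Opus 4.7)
The plan is to reduce to a minimal counterexample and then invoke the Lo~Faro bound stated just before the proposition. Suppose, toward contradiction, that Frankl conjecture holds for every $\T{iD}$ family but fails in general, and let $\F$ be a counterexample chosen to minimize $|\F|$ and, subject to that, $|U(\F)|$. Then \cite[Corollary~2]{LoF94} applies, yielding $|\shell{z}|\leq 1$ for every $z\in U(\F)$.

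Using this bound, I would verify directly that $\F$ satisfies the axiom $\T{iD}$. Unravelling the definition, this amounts to showing $\{x,y\}\cup\U{x}\cup\U{y}=U(\F)$ for every pair of distinct $x,y\in U(\F)$. Fix such an $x,y$ and take any $z\in U(\F)\setminus\{x,y\}$; I want $z\in\U{x}\cup\U{y}$. Suppose, for contradiction, that $z\notin \U{x}\cup\U{y}$. Since $z\notin\U{x}$ says that every member of $\F$ containing $z$ also contains $x$, we have $\F_z\subseteq\F_x$; by the characterization $\hat{z}=\{w\in U(\F):\F_z\subseteq\F_w\}$ recorded in the preliminaries, this is the same as $x\in\hat{z}$. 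The analogous reasoning with $y$ in place of $x$ gives $y\in\hat{z}$, and since $x\neq z\neq y$, both $x$ and $y$ lie in $\shell{z}=\hat{z}\setminus\{z\}$, forcing $|\shell{z}|\geq 2$ and contradicting the Lo~Faro bound.

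Therefore every $z\in U(\F)\setminus\{x,y\}$ belongs to $\U{x}\cup\U{y}$, so $\F$ is $\T{iD}$. By hypothesis, Frankl conjecture then holds for $\F$, contradicting its choice as a counterexample. The only nontrivial ingredient is the imported bound $|\shell{z}|\leq 1$; the rest is a direct translation between the descriptions of $\U{\cdot}$ and $\hat{\cdot}$ in terms of the families $\F_w$, so I anticipate no serious obstacle in filling in the details.
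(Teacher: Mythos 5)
Your proposal is correct and follows essentially the same route as the paper: pass to a minimal counterexample, invoke the Lo~Faro bound $|\shell{z}|\leq 1$, and observe that $z\notin\U{x}\cup\U{y}$ would put both $x$ and $y$ in $\shell{z}$, which is impossible. The translation between $z\notin\U{x}$ and $\F_z\subseteq\F_x$ is exactly the one the paper uses, so there is nothing to add.
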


As noted above, the condition $\T{iD}$ is not sufficient for a family
to be $\T{DD}$. However, the following holds.

\begin{proposition}
  \label{TiDandTIimpliesTDD}
  Let $X$ be a set endowed with a supratopology $\F$. If
  $(X,\F)$ satisfies $\T{iD}$ and $\T{\mc I}$, then $(X,\F)$ is
  $\T{DD}$.
\end{proposition}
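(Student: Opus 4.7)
The plan is to observe that $\T{DD}=\T{iD}\cap\T{D}$ by definition, so, given the hypothesis $\T{iD}$, it suffices to verify the axiom $\T{D}$: that for every $a\in X$, one has $\U{a}\cup\{a\}\in\F$. The idea is to feed a carefully chosen test set to the independence axiom so that its first alternative collapses for purely topological reasons, forcing the second.

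Fix $a\in X$. First I would dispose of the trivial case $\dot{a}=\emptyset$, in which $\bar{a}=\{a\}$ gives $\U{a}\cup\{a\}=X\in\F$ immediately. Assuming $\dot{a}\neq\emptyset$, I would apply $\T{\mc I}$ to the element $a$ with the subset $S=\dot{a}\subseteq X\setminus\{a\}$. The first alternative would produce $O\in\F_{\tilde a}$ with $O\cap\dot{a}\neq\emptyset$; but any $y\in O\cap\dot{a}$ lies in $\bar{a}$, and the identification $\bar{a}=\{y\in X:\F_y\subseteq\F_a\}$ recalled in the excerpt then forces $a\in O$, contradicting $O\in\F_{\tilde a}$. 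So only the second alternative can survive: there is $O\in\F_a$ with $O\cap\dot{a}=\emptyset$, i.e.\ $O\subseteq\U{a}\cup\{a\}$ while still $a\in O$.

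To close, the union-closure of $\F$ gives $\U{a}\in\F$ (as the union of $\F_{\tilde a}$) and hence $O\cup\U{a}\in\F$; a quick double-inclusion check, using $O\subseteq\U{a}\cup\{a\}$ together with $a\in O$, yields $O\cup\U{a}=\U{a}\cup\{a\}$, which establishes $\T{D}$ and therefore $\T{DD}$. I do not anticipate a substantive obstacle: the crux of the argument lies entirely in the choice of the test set $S=\dot{a}$, tailored so that the second alternative of $\T{\mc I}$ reads exactly as ``there is an open neighborhood of $a$ contained in $\U{a}\cup\{a\}$'', which is the missing ingredient for $\T{D}$; the hypothesis $\T{iD}$ then serves only to combine with $\T{D}$ to yield $\T{DD}$.
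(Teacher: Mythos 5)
Your proof is correct and takes essentially the same route as the paper's: the paper also reduces to verifying $\T{D}$, observes that every open set meeting $\dot{x}$ must contain $x$ (so $\bigcup_{y\in\dot{x}}\F_y\subseteq\F_x$), invokes independence to extract an open $S\in\F_x$ disjoint from $\dot{x}$, and concludes via $S\cup\U{x}=\U{x}\cup\{x\}\in\F$. The only cosmetic difference is that you apply the definition of $\T{\mc I}$ directly with test set $\dot{a}$ and rule out its first alternative, whereas the paper phrases the same step as the strictness of that inclusion via Lemma~\ref{dependents}.
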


\begin{proof}
  Let $(X,\F)$ be $\T{\mc I}$ supratopological space satisfying
  $\T{iD}$, and $x\in X$. We want to prove that
  $\U{x}\cup\{x\}\in \F.$ If $\U{x}=X\setminus\{x\}$, we are done
  since $X\in \F$. If not, then $\dot{x}\neq\emptyset$. Notice
  that, if $y\in \dot{x}$, then any open set containing $y$ must also
  contain $x$, since $y\not\in \U{x}$. That is,
  $\bigcup_{y\in \dot{x}}\F_y\subseteq \F_{x}.$ Since
  $(X,\F)$ is $\T{\mc I}$, the previous inclusion has to be strict,
  and hence there must exist some $S\in \F_x$ disjoint from
  $\dot{x}=X\setminus(\U{x}\cup\{x\})$, which implies that
  $S\subseteq \U{x}\cup\{x\}$. But then
  $\U{x}\cup\{x\}=S\cup\U{x}\in \F$.
\end{proof}

We finish this section by showing that the separation axiom $\T{FF}$
is strong enough to imply Frankl's conjecture.
    
\begin{lemma}
  \label{tff}
  Let $(X,\F)$ be a supratopological space. Then $(X,\F)$ is
  $\T{FF}$ if and only if every $S\subseteq X$ is either open or
  closed.
\end{lemma}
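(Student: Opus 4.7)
The plan is to prove both directions by direct unpacking of definitions; the whole content of the equivalence lies in choosing the right test pair in one direction. Recall that $A \vlongdash B$ means there is an open $O \in \F$ with $A \subseteq O$ and $O \cap B = \emptyset$, so being $\T{FF}$ is a statement about being able to realize one side of a disjoint finite pair as a subset of an open set disjoint from the other side.

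For the easy direction, assuming every subset of $X$ is open or closed, I would take any disjoint finite $S_1, S_2 \subseteq X$ and case-split on the status of $S_1$. If $S_1 \in \F$, then $S_1$ itself witnesses $S_1 \vlongdash S_2$, since $S_1 \cap S_2 = \emptyset$ by hypothesis. If $S_1$ is closed, then $X \setminus S_1 \in \F$ is an open set containing $S_2$ (again by disjointness) and trivially disjoint from $S_1$, so $S_2 \vlongdash S_1$. Either way the $\T{FF}$ requirement is met.

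For the converse, the key move is to apply $\T{FF}$ to the pair $S_1 = S$ and $S_2 = X \setminus S$, for an arbitrary $S \subseteq X$; these are disjoint and finite because the ambient universe is finite throughout the paper. If $S \vlongdash X \setminus S$, the witnessing open set $O$ satisfies $S \subseteq O$ and $O \cap (X \setminus S) = \emptyset$, the latter forcing $O \subseteq S$, hence $O = S$ and $S$ is open. Otherwise $X \setminus S \vlongdash S$, and the symmetric conclusion yields $X \setminus S \in \F$, i.e.\ $S$ is closed.

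I do not anticipate any real obstacle: the proof is essentially a tautology once the reader thinks to test $\T{FF}$ against the complementary pair $(S, X \setminus S)$. The only small point worth flagging is that the finiteness of $X$, implicit throughout the paper, is what lets us feed an arbitrary subset into the $\T{FF}$ hypothesis, whose statement is restricted to finite disjoint pairs.
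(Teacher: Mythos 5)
Your proof is correct and follows essentially the same route as the paper's: testing $\T{FF}$ against the complementary pair $(S, X\setminus S)$ for one direction, and case-splitting on whether $S_1$ is open or closed for the other. The only difference is cosmetic — you explicitly flag the finiteness point, which the paper leaves implicit.
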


\begin{proof} Assume that $(X,\F)$ is $\T{FF}$ and let
  $S\subseteq X$. Then, there must be $O\in F$ such that
  $S\subseteq O$ and $(X\setminus S)\cap O=\emptyset$, and so $S=O$,
  in which case $S$ is open; or $X\setminus S\subseteq O$ and
  $S\cap O=\emptyset$, and so $O=X\setminus S$, in which case $S$ is
  closed.
      
  Now, assume that every $S\subseteq X$ is either open or closed, and
  let $S_1,S_2 \subseteq X$ be disjoint. If $S_1$ is open, then taking
  $O=S_1$ in the definition of $\T{FF}$, we get that $S_1\subseteq O$
  and $O\cap S_2=\emptyset$. If, on the other hand, $S_1$ is closed,
  then taking $O=X\setminus S_1$, we obtain that $S_1\cap O=\emptyset$
  and $S_2\subseteq O$, so $(X,\F)$ is $\T{FF}$.
\end{proof}
    
\begin{proposition}
  Let $\F$ be a finite $\T{FF}$ union-closed family of sets. Then the
  Frankl Conjecture holds for $\F$.
\end{proposition}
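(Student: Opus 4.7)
The plan is to use the characterization from the preceding lemma---that $\T{FF}$ is equivalent to every subset of $X$ being open or closed---and to split into two cases depending on whether $\F$ contains a singleton.

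\emph{Easy case: $\{a\}\in\F$ for some $a\in X$.} Here I apply the classical ``trivial FC'' argument. Since $\F$ is union-closed and $\{a\}\in\F$, the map $F\mapsto F\cup\{a\}$ is a well-defined injection $\F_{\tilde a}\to\F_a$, giving $|\F_{\tilde a}|\le|\F_a|$ and hence $|\F_a|\ge|\F|/2$.

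\emph{Hard case: no singleton is in $\F$.} By $\T{FF}$, each $\{a\}$ is then closed, so $X\setminus\{a\}\in\F$ for every $a\in X$. Thus $\F$ contains the ``co-singleton base'' $\mc{B}_{0}=\{\emptyset,X\}\cup\{X\setminus\{a\}:a\in X\}$ of size $n+2$, and every $a\in X$ already belongs to $n$ of the sets of $\mc{B}_{0}$ (namely $X$ together with the $n-1$ sets $X\setminus\{b\}$ for $b\ne a$). Writing $|\F|=n+2+|\mc{E}|$, where $\mc{E}=\F\setminus\mc{B}_{0}$, the target inequality $|\F_a|\ge|\F|/2$ reduces to finding an $a\in X$ with $|\mc{E}_a|\ge(|\mc{E}|-n+2)/2$. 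When $|\mc{E}|\le n-2$, the right-hand side is nonpositive and the claim is automatic.

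The main obstacle is the regime $|\mc{E}|>n-2$, equivalently $|\F|>2n$. My plan here is to exploit the complementation bijection induced by $\T{FF}$: every middle-sized $S\subseteq X$ satisfies $S\in\F$ or $X\setminus S\in\F$, and the clopens $\mc{D}=\F\cap\mc{C}$ are closed under complementation so that each $a\in X$ lies in exactly $|\mc{D}|/2$ of them. From this one derives the equivalent reformulation that $|\F_a|\ge|\F|/2$ iff $|\mc{A}_a|\ge|\mc{A}|/2$, where $\mc{A}=\F\setminus\mc{D}$ is the ``open-but-not-clopen'' part; since every $X\setminus\{a\}$ lies in $\mc{A}$ and complementation identifies $\mc{A}$ with $\mc{B}=\mc{C}\setminus\F$, a careful double-counting over the complementary pairs $\{S,X\setminus S\}$ --- combined with the fact that union-closedness forces unions of small opens to cascade into larger required opens --- should yield the required element.
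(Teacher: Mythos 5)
Your setup is fine as far as it goes --- the singleton case, the observation that absence of singletons forces all co\-singletons $X\setminus\{a\}$ into $\F$, and the reduction of the target inequality to the ``open-but-not-clopen'' part $\mc{A}$ (since the clopen sets are closed under complementation, each point lies in exactly half of them) are all correct. But the proof stops exactly where the difficulty begins. The regime $|\F|>2n$ is not a corner case: a $\T{FF}$ family has $|\F|\geq 2^{n-1}$, since by Lemma~\ref{tff} at least one member of each of the $2^{n-1}$ complementary pairs $\{S,X\setminus S\}$ is open, so essentially the entire problem lives in the regime you defer. The final sentence --- that a ``careful double-counting over the complementary pairs \ldots should yield the required element'' --- is not an argument. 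In particular, $\mc{A}$ need not be union-closed (if $S,T\in\F$ with $X\setminus S,\,X\setminus T\notin\F$, nothing prevents $X\setminus(S\cup T)\in\F$), so the reduction to $|\mc{A}_a|\geq|\mc{A}|/2$ does not hand you a smaller instance of anything you know how to solve, and no cascading/double-counting mechanism is actually exhibited.

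For comparison, the paper's proof is two lines: Lemma~\ref{tff} gives $|\F|\geq 2^{n-1}$ as above, and then one invokes the theorem of Karpas that union-closed families containing at least half of $\mc{P}([n])$ satisfy Frankl's conjecture. That density-$\tfrac12$ theorem is itself a substantive result (proved via Fourier/compression techniques, not elementary counting), which is a strong indication that the gap in your argument cannot be closed by the kind of pair-counting you sketch. To repair the proof you would need either to cite such a result for dense union-closed families, or to find a genuinely new argument exploiting the full strength of $\T{FF}$ (every subset open or closed) beyond the cardinality bound --- and the latter is not present in your proposal.
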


\begin{proof} It follows from Lemma \ref{tff} that, for a $\T{FF}$
  union-closed family $\F\subseteq \mc P([n])$, we have that
  $|\F|\geq 2^{n-1}$, and so Frankl's Conjecture holds by
  \cite{karpas2017resultsunionclosedfamilies}.
\end{proof}
\section{Dual and normalized families}
\label{norm}

The notion of \emph{dual family} was introduced in \cite{JTPV97}, and
a similar notion was used in \cite{Woj92} to prove the equivalence
between the usual formulation of Frankl's conjecture and the Salzborn
formulation. The main difference between those two notions is that, in
\cite{JTPV97}, the sets of the notion presented in \cite{Woj92} are
replaced by their indexes, and the empty set is not included. Also,
the definition in \cite{JTPV97} uses a minimal generating set, while
the definition in \cite{Woj92} uses any generating set.  We use the
notion introduced in \cite{JTPV97}, with some small variations. We
will describe the construction of the dual of a given family,
illustrate it with examples, and give some of its basic properties. We
then give some structural results on normalized families, and
highlight their relation with Frankl's conjecture.

Consider an indexed subfamily $\mc{H}=\{H_1,\ldots,H_s\}$ of distinct
non-empty sets in $\mc{P}([m])$ with $U(\mc H) = [m]$.  For each
$j\in [m]$, set
$$\mc H^{\iota_j}=\{i\in[s]: j\in H_i\} \in \mc P([s]), $$
i.e., $\mc H^{\iota_j}$ is the set of indices of sets in $\mc H$ to
which $j$ belongs, and put
$${\mc H}^\iota=\{\mc H^{\iota_1},\ldots,\mc H^{\iota_m}\}.$$
Moreover, for any $A\subseteq \mc [m]$, set
$$\mc H^{\iota_A} = \{i\in[s]: A \cap H_i\neq\emptyset\} = \bigcup_{j\in A}
\mc H^{\iota_j}.$$
Note that
$\mc H^{\iota_{A\cup B}} = \mc H^{\iota_A} \cup \mc
H^{\iota_B}$. Also, note that
\begin{equation}
  \label{eq:iotaid}
  \mc H^{\iota\iota} = \mc H, \text{ since } j\in \mc H^{\iota\iota_i}
  \iff i \in \mc H^{\iota_j} \iff j\in  H_i.
\end{equation}

The choice of indices for the sets in $\mc H$ is irrelevant, as a
different choice just induces a permutation of elements on the subsets
of $\mc{P}([m])$, and one simply obtains an isomorphic family.

Now, given any subset $\mc L$ of $\mc P([m])$, we define $\mc{L}^*$ as
the union-closed family generated by $\mc H^\iota$, where
$\mc H =\mc{L}\setminus\{\emptyset\}$ is indexed in some way. From
what was noted above, $\mc{L}^*=\{\mc H^{\iota_A}: A\subseteq
[m]\}$. The special cases $\mc L=\F$ and $\mc L=J(\F)$ will be
particularly relevant. The family $\mc{F}^*$ is called the \emph{dual
  family of $\F$}.

\begin{remark}
  \label{FiotaF_A}
  Note that $\F^{\iota_A}$ is the set consisting of the indices of the
  sets in $\F_A$, and thus, in particular, $|\F^{\iota_A}| = |\F_A|$.
\end{remark}

It is clear from the definitions that
$|U(\mc L^*)|=|\mc L|-\varepsilon_{\mc L}$, , where
$$\varepsilon_{\mc L}=
\begin{cases}
  0, & \text{if } \emptyset\not\in {\mc L}, \\
  1, & \text{otherwise}.
\end{cases}
$$
and that $\mc L^*$ is separating.

The next proposition gives a procedure to build examples of normalized
families, and, in fact, as we will see below, this procedure yields
all normalized families.  The result is equivalent to one contained in
\cite[Lemma 2.2]{Woj92}, but the context is a bit different, and so
we provide a complete proof which underlies its topological nature.

\begin{proposition}
  \label{dualnormalized}
  Let $\F$ be a union-closed family, and $\mc G$ be a generating
  subfamily. Then $|\mc{G}^*|=|\F|+1-\varepsilon_\F$.  In
  particular, we have that $\F^*$ is a normalized family.
\end{proposition}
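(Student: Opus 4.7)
My plan is to construct an explicit map $\Theta: \F \hookrightarrow \mc G^*$ that is always injective, and whose image is all of $\mc G^*$ precisely when $\emptyset \in \F$ (missing a single element otherwise); this immediately yields $|\mc G^*| = |\F| + 1 - \varepsilon_\F$. Fixing the indexing $\mc G \setminus \{\emptyset\} = \{G_1, \ldots, G_s\}$, for each $F \in \F$ I set $I(F) = \{i \in [s] : G_i \subseteq F\}$ and $\Theta(F) = [s] \setminus I(F)$. The chain of equivalences $i \in \Theta(F) \iff G_i \not\subseteq F \iff G_i \cap (U(\F) \setminus F) \neq \emptyset$ identifies $\Theta(F) = \mc G^{\iota_{U(\F) \setminus F}}$, so $\Theta$ does land in $\mc G^*$.

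Injectivity of $\Theta$ will follow from the recovery identity $F = \bigcup_{i \in I(F)} G_i$, valid because $\mc G$ generates $\F$ and $I(F)$ is the maximal index set with that property; hence $\Theta(F_1) = \Theta(F_2)$ forces $I(F_1) = I(F_2)$, and then $F_1 = F_2$. To identify the image, for any $A \subseteq [m]$ I would consider $F_A = \bigcup\{G_i : G_i \cap A = \emptyset\}$; since $F_A \cap A = \emptyset$ by construction, both inclusions give $I(F_A) = \{i : G_i \cap A = \emptyset\} = [s] \setminus \mc G^{\iota_A}$, and hence $\Theta(F_A) = \mc G^{\iota_A}$. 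The only obstruction is that $F_A$ is empty exactly when $\mc G^{\iota_A} = [s]$, so the top element $[s] \in \mc G^*$ lies in the image if and only if $\emptyset \in \F$. Counting gives $|\mc G^*| = |\F| + 1 - \varepsilon_\F$.

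For the \emph{in particular} clause, specializing to $\mc G = \F$ yields $|\F^*| = |\F| + 1 - \varepsilon_\F$, and since $|U(\F^*)| = |\F| - \varepsilon_\F$ was noted above, we obtain $|\F^*| = |U(\F^*)| + 1$. The family $\F^*$ is union-closed by construction and contains $\emptyset$ as the empty union, so only separation remains: for distinct $i, i' \in [s]$, the corresponding non-empty sets $F_i, F_{i'} \in \F$ are distinct, so some $j \in U(\F)$ lies in exactly one of them, and then $\mc F^{\iota_j} \in \F^*$ contains exactly one of $i, i'$. The main subtlety I anticipate is the bookkeeping around the two distinguished elements of $\mc G^*$: the bottom $\emptyset$, always present as an empty union, and the top $[s]$, always present but hit by $\Theta$ only when $\emptyset \in \F$ (via $\Theta(\emptyset) = [s]$); keeping track of where the $+1 - \varepsilon_\F$ correction enters is the easy place to make an off-by-one error.
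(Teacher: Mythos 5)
Your proof is correct and follows essentially the same route as the paper's: your map $\Theta(F)=\mc G^{\iota_{U(\F)\setminus F}}$ with inverse $A\mapsto F_A=\bigcup\{G_i: G_i\cap A=\emptyset\}$ is exactly the paper's identification $\mc G^*=\{\mc H^{\iota_{U\setminus O}}:O\in\F\cup\{\emptyset\}\}$ via the interior operator (your $F_A$ is precisely $(U\setminus A)^\circ$), and your handling of the $+1-\varepsilon_\F$ correction matches theirs.
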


\begin{proof}
  Set $U=U(\F) = [m]$, and
  $\mc{G}\setminus\{\emptyset\} = \{H_1,\ldots, H_s\}$. As pointed out
  above, $\mc G^* = \{\mc H^{\iota_A}: A\subseteq [m]\}$, and
  therefore
  $\mc G^* = \{\mc H^{\iota_{U\setminus A}}: A\subseteq [m]\}$.  Now,
  $\mc H^{\iota_{U\setminus A}} = \mc H^{\iota_{U\setminus B}}$ is
  equivalent to saying that
  $$\forall i\in[s]\quad (U\setminus A)\cap H_i\neq \emptyset \iff
  (U\setminus B)\cap H_i\neq \emptyset,$$
  which, of course, is the same as
  $$\forall i\in[s]\quad (U\setminus A)\cap H_i= \emptyset \iff
  (U\setminus B)\cap H_i = \emptyset,$$
  or
  $$\forall i\in[s]\quad H_i\subseteq A \iff H_i \subseteq B.$$
  But this is equivalent to $A^\circ = B^\circ$, since $\mc G$ is a
  generating family for $\F$. It follows from this that
  $\mc G^* = \{\mc H^{\iota_{U\setminus A^\circ}}: A\subseteq [m]\} =
  \{\mc H^{\iota_{U\setminus O}}: O\in \F\cup\{\emptyset\}\}$, and
  that the elements of this last set are distinct, which proves first
  the claim. The second one is now very easy to establish.
\end{proof}

\begin{example}
  \label{P3m1}
  Suppose we want to construct a $6$-normalized family. To do so, we
  need a union-closed family with $7$ sets (or only $6$, if the the
  empty set is excluded). Take, for example:
  $$\mc{F}=\mc{P}([3])\backslash\{\{1\}\} =
  \{\emptyset,\{2\},\{3\},\{1,2\},\{1,3\},\{2,3\},\{1,2,3\}\}.$$
  One has $\F^{\iota_1}=\{3,4,6\}$, $\F^{\iota_2}=\{1,3,5,6\}$
  and $\F^{\iota_3}=\{2,4,5,6\}$. Now, we simply build
  \begin{align*}
    \mc{F}^*&=\langle\{3,4,6\}, \{1,3,5,6\}, \{2,4,5,6\}\rangle\\ 
            &=\{\emptyset, \{3,4,6\}, \{1,3,5,6\}, \{2,4,5,6\},
              \{1,3,4,5,6\},\{2,3,4,5,6\},  \{1,2,3,4,5,6\}\}, 
  \end{align*} 
  which is $6$-normalized.
\end{example}

The following technical lemma gives us two properties that will be
useful later on.

\begin{lemma}
  \label{techlemma}
  Let $\F$ be a union-closed family and $\mc G$ be a generating
  subfamily. We have the following:
  \begin{enumerate}
  \item if $\F$ is independent, then $J(\mc G^*)=\mc G^\iota$,
    $|J(\mc G^*)|=|U(\F)|$ and $J(\F^*)^* = \F$.
  \item if $\F$ is normalized, then $\mc G^\iota$ is
    union-closed.
  \end{enumerate}
\end{lemma}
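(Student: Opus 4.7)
My plan for part (1) is to prove $J(\mc G^*) = \mc G^\iota$ via two inclusions. The inclusion $J(\mc G^*) \subseteq \mc G^\iota$ is general: any non-empty irreducible of a union-closed family must lie in any of its generating subfamilies. For the reverse inclusion, I fix $j \in [m]$ and suppose $\mc G^{\iota_j} = A \cup B$ with $A, B \in \mc G^*$. Since every element of $\mc G^*$ has the form $\mc G^{\iota_{S'}}$ for some $S' \subseteq [m]$, I write $A = \mc G^{\iota_{S_A}}$ and $B = \mc G^{\iota_{S_B}}$, so that the identity $\mc G^{\iota_{A' \cup B'}} = \mc G^{\iota_{A'}} \cup \mc G^{\iota_{B'}}$ recorded in the paper yields $\mc G^{\iota_j} = \mc G^{\iota_S}$ with $S = S_A \cup S_B$. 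If $j \in S$, the decomposition is trivial. Otherwise, the equality $\mc G^{\iota_j} = \mc G^{\iota_S}$ translates, through the fact that every $F \in \F$ is a union of elements of $\mc G$, into $\F_j = \bigcup_{b \in S} \F_b$, contradicting independence by Lemma~\ref{dependents}. The cardinality $|J(\mc G^*)| = |U(\F)|$ then follows because independence implies separation (take $|S|=1$ in Definition~\ref{def:independent}), making the $\mc G^{\iota_j}$ pairwise distinct.

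For $J(\F^*)^* = \F$, I apply the above with $\mc G = \F$ to get $J(\F^*) = \F^\iota$. Then $(\F^\iota)^* = \langle (\F^\iota)^\iota \rangle = \langle \F \setminus \{\emptyset\} \rangle = \F$, invoking the double-dual identity~(\ref{eq:iotaid}) and the fact that $\F$ is union-closed (including the empty set in the intended reading, which is automatic on the dual side).

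For part~(2), my approach is a cardinality-matching argument based on Proposition~\ref{dualnormalized}. When $\F$ is normalized, that proposition gives $|\mc G^*| = |\F| = |U(\F)| + 1$; since $\mc G^*$ contains the empty set by construction of $\langle\cdot\rangle$, $|\mc G^* \setminus \{\emptyset\}| = |U(\F)|$. On the other hand, each $\mc G^{\iota_j}$ is non-empty (as $j \in U(\F)$ belongs to some element of $\mc G$), and separation forces the $\mc G^{\iota_j}$ to be pairwise distinct, giving $|\mc G^\iota| = |U(\F)|$. Since $\mc G^\iota \subseteq \mc G^* \setminus \{\emptyset\}$, the two families coincide, and $\mc G^* \setminus \{\emptyset\}$ is union-closed because the union of two non-empty sets is non-empty.

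The main obstacle is the translation in part~(1) from the dual equation $\mc G^{\iota_j} = \mc G^{\iota_S}$ back to the equation $\F_j = \bigcup_{b\in S}\F_b$ in $\F$, which requires crucially exploiting that $\mc G$ generates $\F$ via unions so that $j \in F$ iff $j$ lies in one of the generators $H_i \subseteq F$; without this step one cannot propagate the identity from $\mc G$ to all of $\F$, and hence cannot invoke Lemma~\ref{dependents} to close the argument.
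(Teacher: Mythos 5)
Your proof is correct and follows essentially the same route as the paper's: identify $J(\mc G^*)$ with $\mc G^\iota$ by ruling out nontrivial decompositions via Lemma~\ref{dependents}, use separation to count $|\mc G^\iota|=|U(\F)|$, apply the double-dual identity for $J(\F^*)^*=\F$, and get part~(2) by matching cardinalities against Proposition~\ref{dualnormalized} to conclude $\mc G^*=\mc G^\iota\cup\{\emptyset\}$. If anything, your treatment of irreducibility is slightly more careful than the paper's, since you handle decompositions $\mc G^{\iota_j}=\mc G^{\iota_{S_A}}\cup\mc G^{\iota_{S_B}}$ for arbitrary index sets and dispose of the case $j\in S$ explicitly, whereas the paper only writes out the case of two singletons.
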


\begin{proof}
  Let $\F$ be an independent union-closed family and put
  $U(\F)=[m]$. Clearly, in $\mc G^*$ only the sets in $\mc G^\iota$
  may be irreducible. Let
  $\mc G\setminus\{\emptyset\} =\{H_1,\ldots,H_s\}$. If
  $\mc H^{\iota_c}= \mc H^{\iota_a}\cup \mc H^{\iota_b}$, for some
  $a,b,c\in [m]$, then, since $\mc G$ is generating, it would follow
  that $\F_c=\F_a\cup \F_b$, which contradicts independence
  by Proposition~\ref{dependents}. This shows that
  $J(\mc{G}^*) = \mc G^\iota$.

  Also, since $\F$ is independent (in particular, separating), the
  sets $\mc H^{\iota_j}$ are all distinct, and so we have that
  $|J(\mc{G^*})|=|\mc G^\iota|=m$. Now, from what was proven in the
  last paragraph, we know that
  $J(\F^*)=(\F\setminus\{\emptyset\})^\iota$ and so
  $J(\F^*)^* = ((\F\setminus\{\emptyset\})^\iota)^* = \langle (\mc
  F\setminus\{\emptyset\})^{\iota\iota}\rangle= \F$.  This
  completes the proof of the first claim.
  
  Finally, let $\F$ be an $n$-normalized family. Then, by the
  previous lemma, $|\mc G^*|=n+1=|U(\F)|+1=|\mc G^\iota|+1$, since
  $\F$ is separating. This means that
  $\mc G^* = \mc G^\iota\cup \{\emptyset\}$, and thus $\mc G^\iota$ is
  union-closed.
\end{proof}

We can use this lemma to show that there is, up to bijection, only one
independent normalized family, while by definition all normalized
families are separating. This implies that the concept of independence
is stronger than the concept of separation, i.e.
$T_0\not\subset \T{\mc I}$.

\begin{proposition}
  The only independent $n$-normalized family is the \emph{staircase}
  family $\mc{N}=\{\emptyset,[1],\ldots,[n]\}$, up to bijection.
\end{proposition}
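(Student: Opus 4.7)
The plan is to dualize and then exploit Lemma~\ref{techlemma} applied with $\mc G = \mc N$. Since $\mc N$ is simultaneously independent and normalized, both parts of that lemma apply: from part~(1) one obtains $J(\mc N^*) = \mc N^\iota$ with $|\mc N^\iota| = n$ and $J(\mc N^*)^* = \mc N$, and from part~(2) that $\mc N^\iota$ is itself union-closed. Since each element of $\mc N^\iota$ is irreducible in $\mc N^* = \langle \mc N^\iota \rangle$ and sits inside the subfamily $\mc N^\iota$, it remains irreducible within $\mc N^\iota$ as well.

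Next I would record a simple structural observation: a union-closed family in which every set is irreducible must be a chain under inclusion, because if $A, B$ were incomparable then $A \cup B$ would belong to the family and admit the non-trivial decomposition $A\cup B = A \cup B$, contradicting its own irreducibility. Applied to $\mc N^\iota$, this forces a chain of $n$ distinct non-empty subsets of the $n$-element index set of $\mc N \setminus \{\emptyset\}$, where the non-emptiness of each $\mc N^{\iota_j}$ follows from the fact that every point of $U(\mc N)$ belongs to some non-empty set of $\mc N$. Since $n$ distinct subsets of an $n$-element set forming a chain must have sizes $1, 2, \ldots, n$, after relabelling the index set we may assume $\mc N^\iota = \{[1], [2], \ldots, [n]\}$.

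To finish, I would unwind the dualization via $\mc N = J(\mc N^*)^* = (\mc N^\iota)^*$. A direct computation shows that, with the above indexing, $(\mc N^\iota)^{\iota_j} = \{i\in[n] : j \in [i]\} = \{j, j+1, \ldots, n\}$, so $(\mc N^\iota)^\iota$ is a chain and $(\mc N^\iota)^* = \{\emptyset\} \cup \{\{j,\ldots,n\} : j \in [n]\}$, which is the $n$-staircase after the reversing bijection $j \mapsto n-j+1$. I anticipate that the only real friction is bookkeeping across the iterated indexing; the conceptual content is entirely absorbed by Lemma~\ref{techlemma} together with the elementary remark that universal irreducibility in a union-closed family forces a chain.
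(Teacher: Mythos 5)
Your proposal is correct and takes essentially the same route as the paper: both apply Lemma~\ref{techlemma} to conclude that $J(\mc{N}^*)=\mc{N}^\iota$ is a union-closed family all of whose members are irreducible, hence a chain under inclusion, and then recover $\mc{N}=J(\mc{N}^*)^*$ as the dual of that chain (you merely compute this dual explicitly where the paper appeals to the fact that the dual of a chain is a chain and that there is only one separating chain on a given universe up to bijection). The only detail you omit is the one-line verification that the staircase family is itself independent, which the phrasing ``the only independent $n$-normalized family'' implicitly requires and which the paper records in a single sentence.
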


\begin{proof}
  It is easy to see that the staircase family is independent for every
  $n\in\mathbb{N}.$ Now let $\mc{N}$ be an independent $n$-normalized
  family of sets. By the previous lemma, it follows that $J(\mc{N}^*)$
  is union-closed and all its sets are, of course, irreducible. Let
  $X,Y\in J(\mc{N}^*)$. Then, $X\cup Y\in J(\mc{N}^*)$ is an
  irreducible set, and so either $Y\subseteq X$ or $X\subseteq
  Y$. Therefore $J(\mc{N}^*)$ is a chain. Now, the previous lemma also
  implies that $\mc{N}=J(\mc{N}^*)^*$. But it is easily seen that the
  dual of a chain is still a chain, and that there is only one
  separating chain of sets with a given universe, up to bijection.
\end{proof}

The next two propositions show that any $n$-normalized family can be
obtained as the dual of an independent family.

\begin{proposition}
  \label{tudo+}
  Let $\mc{N}$ be a normalized family. Then $\mc{N}=J(\mc{N})^{**}$.
\end{proposition}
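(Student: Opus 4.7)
The plan is to exploit the involution property (\ref{eq:iotaid}): for any indexed family $\mc{H}$ of non-empty sets, $\mc{H}^{\iota\iota} = \mc{H}$. Setting $\mc{G} = J(\mc{N})$, so that $\mc{N} = \langle \mc{G}\rangle$, the task becomes to show $(\mc{G}^*)^* = \mc{N}$.

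First I would pin down the cardinality of $\mc{G}$. Writing $n = |U(\mc{N})|$, an application of Proposition~\ref{dualnormalized} to $\F = \mc{N}$ with generating subfamily $\mc{G}$ yields that $\mc{G}^*$ is normalized with $|\mc{N}| + 1 - \varepsilon_{\mc{N}} = n+1$ sets; since $\mc{G}$ contains no empty set, $|U(\mc{G}^*)| = |\mc{G}|$, and normalizedness forces $|\mc{G}| = n$.

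Next I would identify $\mc{G}^* \setminus \{\emptyset\}$ with $\mc{G}^\iota$. The $n$ sets $\mc{G}^{\iota_j}$ for $j \in U(\mc{N})$ are pairwise distinct because $\mc{N}$ is separating (Lemma~\ref{iffseparating}), and each is non-empty because every element of $U(\mc{N}) = U(\mc{G})$ lies in some irreducible of $\mc{N}$. Thus $\mc{G}^\iota \cup \{\emptyset\}$ is a subset of $\mc{G}^*$ of cardinality $n + 1$, which matches $|\mc{G}^*|$, forcing equality.

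The conclusion then follows directly from the definition of the dual together with (\ref{eq:iotaid}):
\[
(\mc{G}^*)^* \;=\; \langle (\mc{G}^* \setminus \{\emptyset\})^\iota\rangle \;=\; \langle (\mc{G}^\iota)^\iota\rangle \;=\; \langle \mc{G}\rangle \;=\; \mc{N}.
\]
The main obstacle is really just the middle cardinality count, ensuring that $\mc{G}^* \setminus \{\emptyset\}$ is exactly $\mc{G}^\iota$ and not a strictly larger family of generators; once that is secured, the involution (\ref{eq:iotaid}) collapses the double dual back onto $\mc{N}$ with no further work.
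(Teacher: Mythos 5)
Your overall strategy is exactly the paper's: use Proposition~\ref{dualnormalized} to get $|J(\mc{N})^*|=n+1$, observe that $J(\mc{N})^*$ is generated by the $n$ pairwise distinct non-empty sets $J(\mc{N})^{\iota_j}$, $j\in U(\mc{N})$, together with $\emptyset$, conclude $J(\mc{N})^*=J(\mc{N})^{\iota}\cup\{\emptyset\}$, and collapse the double dual via the involution \eqref{eq:iotaid}. That chain is sound, and your final display is precisely the paper's closing computation.

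However, your first paragraph contains a false assertion. Proposition~\ref{dualnormalized} gives the cardinality $|\mc{G}^*|=|\F|+1-\varepsilon_\F$ for \emph{any} generating subfamily $\mc{G}$, but it asserts normalizedness only for $\F^*$ itself, i.e.\ for the choice $\mc{G}=\F$. For a proper generating subfamily such as $\mc{G}=J(\mc{N})$, the dual $\mc{G}^*$ is in general \emph{not} normalized, and consequently the conclusion $|J(\mc{N})|=n$ is false in general. For instance, $\mc{N}=\{\emptyset,\{1,2\},\{1,3\},\{1,2,3\}\}$ is $3$-normalized with $J(\mc{N})=\{\{1,2\},\{1,3\}\}$, so $|J(\mc{N})|=2\neq 3$, and $J(\mc{N})^*=\mc{P}([2])$ has $4$ sets over a $2$-element universe, hence is not normalized. (Indeed, Proposition~\ref{tudo*} only promises that $J(\mc{N})^*$ is independent.) Fortunately this claim is never used afterwards: your counting in the second paragraph compares the $n$ sets $\mc{G}^{\iota_j}$, indexed by $j\in U(\mc{N})$, against $|\mc{G}^*|=n+1$, and nowhere needs $|\mc{G}|=n$. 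So the proof is correct, and essentially identical to the paper's, once that sentence is deleted; as written, though, it asserts something false about normalized families that a reader could later rely on.
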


\begin{proof} 
  Let $\mc{N}$ be an $n$-normalized family and set
  $\mc H =J(\mc{N}) = \{H_1,\ldots,H_s\}$.  We have
  $J(\mc{N})^*=\left\langle\mc H^{\iota_1},\ldots, \mc
    H^{\iota_n}\right\rangle.$ From Proposition~\ref{dualnormalized} it
  follows that $|J(\mc{N})^*|=|\mc{N}|=n+1$, since here $\mc{N}$
  contains the empty set. Hence
  $J(\mc{N})^*=\{\emptyset,\mc H^{\iota_1},\ldots,\mc H^{\iota_n}\}$.
  But then
  $$J(\mc{N})^{**} = \langle \mc H^{\iota\iota_1}, \ldots,
  \mc H^{\iota\iota_n} \rangle = \langle H_1, \ldots,  H_n
  \rangle =\mc{N},$$
  by \eqref{eq:iotaid}. This proves the claim.
\end{proof}

\begin{proposition}
  \label{tudo*}
  If $\mc{N}$ be a normalized family of sets, then $J(\mc{N})^*$ is
  independent. It follows that any normalized family is the dual of an
  independent family.
\end{proposition}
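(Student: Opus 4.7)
The plan is to prove the first assertion; the second is immediate, since Proposition~\ref{tudo+} gives $\mc{N} = J(\mc{N})^{**} = (J(\mc{N})^*)^*$, exhibiting $\mc{N}$ as the dual of $J(\mc{N})^*$ once the latter is shown to be independent.

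Write $\mc{N}$ as $n$-normalized with $J(\mc{N}) = \{H_1,\ldots,H_s\}$ and set $\mc{M} := J(\mc{N})^*$. The proof of Proposition~\ref{tudo+} already establishes that $\mc{M} = \{\emptyset,\mc{H}^{\iota_1},\ldots,\mc{H}^{\iota_n}\}$, with all $n+1$ sets distinct, and that $U(\mc{M}) = [s]$. Unfolding the definition of $\mc{H}^{\iota_j}$ then yields, for each $i \in [s]$, the identity $\mc{M}_i = \{\mc{H}^{\iota_j} : j \in H_i\}$, since the (necessarily non-empty) sets of $\mc{M}$ containing $i$ are precisely the $\mc{H}^{\iota_j}$ with $j \in H_i$.

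I will then argue by contradiction, using Lemma~\ref{dependents}. Assuming $\mc{M}$ were dependent, there would exist $a \in [s]$ and $S \subseteq [s]\setminus\{a\}$ with $\mc{M}_a = \bigcup_{b \in S}\mc{M}_b$. Substituting the identity of the previous paragraph and using the pairwise distinctness of the $\mc{H}^{\iota_j}$, this collapses to the clean equation $H_a = \bigcup_{b \in S} H_b$ inside $\mc{N}$. Since $\mc{N}$ is union-closed and each $H_b \in \mc{N}$, a short induction on $|S|$---splitting off one summand at a time and invoking the irreducibility of $H_a$---forces $H_a = H_b$ for some $b \in S$ (the cases $|S| \in \{0,1\}$ are immediate: $H_a = \emptyset$ contradicts $H_a \in J(\mc{N})$, and $H_a = H_b$ with $|S| = 1$ contradicts distinctness). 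Distinctness of the $H_i$ then gives $a = b$, contradicting $a \notin S$.

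The main (and essentially only) point requiring thought is the dictionary that converts a would-be dependence in $\mc{M}$ into a set equation among the $H_i$. Once the description $\mc{M} = \{\emptyset\} \cup \{\mc{H}^{\iota_j}\}_{j \in [n]}$ with distinct generators is used to rewrite the $\mc{M}_i$, everything downstream is elementary, and irreducibility of $H_a$ closes the loop.
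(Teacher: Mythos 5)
Your proof is correct and follows essentially the same route as the paper's: both invoke Lemma~\ref{dependents}, translate a putative dependence $\mc{M}_a=\bigcup_{b\in S}\mc{M}_b$ through the dictionary $\mc H^{\iota_j}\in\mc{M}_a\iff j\in H_a$ into the set equation $H_a=\bigcup_{b\in S}H_b$, and derive a contradiction with the irreducibility of the $H_i$. The only difference is that you spell out (via induction on $|S|$ and union-closure of $\mc{N}$) why that equation violates irreducibility, a step the paper leaves implicit.
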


\begin{proof}
  In view of the previous Proposition, is enough to show that
  $\mc L = J(\mc{N})^*$ is independent. Assume this is false, so that,
  from Lemma~\ref{dependents}, and using the notations in the previous
  proof, there would exist $a\in [s]$ and
  $S\subseteq [s]\setminus\{a\}$ such that
  \begin{equation}
    \label{eq:L}
    \mc L_a = \bigcup_{b\in S} \mc L_b.
  \end{equation}
  But
  $\mc H^{\iota_j}\in \mc L_a \iff a\in \mc H^{\iota_j} \iff j\in
  H_a$, and so one sees that \eqref{eq:L} is equivalent to
  $H_a = \bigcup\limits_{b\in S} H_b$, which contradicts the fact that
  the $H_i$ are irreducible.
\end{proof}

\begin{example}
  Let $\mc{N}$ be the following $7$-normalized family:
  $$
  \mc{N}=\{\emptyset, \{1,4,6,7\},
  \{2,5,6,7\},\{3,4,5,6\},[7]\setminus\{3\},[7]\setminus\{2\},
  [7]\setminus\{1\},[7]\}.
  $$
  We have that $J(\mc{N})=\{\{1,4,6,7\},
  \{2,5,6,7\},\{3,4,5,6\}\}$. Let us compute $J(\mc{N})^{**}$ and see
  that it coincides with $\mc{N}$, as Proposition~\ref{tudo+}
  claims. To start with,
  \begin{align*}
    J(\mc{N})^*&=\langle
                 \{\{1\},\{2\},\{3\},\{1,3\},\{2,3\},\{1,2,3\},\{1,2\}\}
                 \rangle\\ 
               &=\{\emptyset,\{1\},\{2\},\{3\},\{1,3\},\{2,3\},\{1,2,3\},
                 \{1,2\}\}. 
  \end{align*}
  It follows that
  $J(\mc{N})^{**}=\langle\emptyset,\{1,4,6,7\},\{2,5,6,7\},
  \{3,4,5,6\}\rangle=\mc{N}$.
\end{example}

It is now very easy to show that the original form of the union-closed
sets conjecture and its Salzborn formulation are equivalent. We state
this result in a from that explicitly exhibits the families involved
in the equivalence.

\begin{theorem}[Salzborn Formulation, \cite{Woj92}]
  \label{franklsalz}
  If $\mc{N}$ is a normalized family and the independent family
  $J(\mc{N})^*$ satisfies the union-closed sets conjecture, then
  $\mc{N}$ satisfies the Salzborn formulation of the conjecture. If
  $\F$ is an independent union-closed family such that $\F^*$
  satisfies the Salzborn formulation of the conjecture, then
  $\F=J(\F^*)^*$ satisfies the union-closed sets conjecture.
\end{theorem}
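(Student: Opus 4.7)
The plan is to exploit the fundamental symmetry built into the star operation: the cardinality of an irreducible set on one side becomes the frequency of the corresponding element on the dual side. Concretely, if $\mc H = \{H_1,\ldots,H_s\}$ indexes a family in $\mc P([m])$, then for every $a\in[s]$ one has
$$|H_a| \;=\; |\{j\in[m] : j \in H_a\}| \;=\; |\{j\in[m] : a \in \mc H^{\iota_j}\}| \;=\; |(\mc H^\iota)_a|.$$
Combined with Propositions \ref{dualnormalized}, \ref{tudo+}, \ref{tudo*} and Lemma \ref{techlemma}, which set up the duality $\mc{N} = J(\mc{N})^{**}$ and $J(\F^*)^* = \F$ between normalized and independent families, each direction of the theorem reduces to straightforward bookkeeping.

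For the first implication, let $\mc{N}$ be normalized with $U(\mc{N})=[m]$ and index the irreducibles as $J(\mc{N}) = \{H_1,\ldots,H_s\}$. By Proposition \ref{dualnormalized}, $J(\mc{N})^*$ is normalized with $|J(\mc{N})^*| = |\mc{N}|$, and by Proposition \ref{tudo*} it is independent. Assuming Frankl holds there, choose $a \in U(J(\mc{N})^*) = [s]$ with $|J(\mc{N})^*_a| \geq |\mc{N}|/2$. By Proposition \ref{tudo+}, the non-empty sets of $J(\mc{N})^*$ are precisely the $\mc H^{\iota_j}$ for $j\in[m]$, so the displayed observation yields $|J(\mc{N})^*_a| = |H_a|$. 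Hence $H_a \in J(\mc{N})$ is an irreducible of size at least $|\mc{N}|/2$, which is the Salzborn conclusion.

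For the second implication, let $\F$ be independent, set $\mc H = \F \setminus \{\emptyset\} = \{H_1,\ldots,H_s\}$ with $U(\F) = [m]$, and suppose $\F^*$ satisfies Salzborn. By Lemma \ref{techlemma}(1), $J(\F^*) = \{\mc H^{\iota_j} : j \in [m]\}$ and $J(\F^*)^* = \F$. A witnessing irreducible $I \in J(\F^*)$ therefore has the form $\mc H^{\iota_j}$ for some $j$, and the observation gives $|I| = |\F_j|$. Using $|\F^*| = |\F| + 1 - \varepsilon_\F \geq |\F|$ from Proposition \ref{dualnormalized}, we conclude $|\F_j| \geq |\F^*|/2 \geq |\F|/2$, which is Frankl for $\F$.

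The only delicate aspect is the bookkeeping around whether $\emptyset \in \F$: this shifts $|\F^*|$ by one, and one must verify that the final inequality still reads $|\F_j| \geq |\F|/2$. This works cleanly because the dual is only strictly larger when $\emptyset \notin \F$, so the weaker bound $|\F^*|/2 \geq |\F|/2$ always suffices. Apart from this arithmetic check, everything follows directly from the duality results and the counting identity in the first paragraph, so I expect no real obstacle.
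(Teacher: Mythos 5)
Your proof is correct and follows essentially the same route as the paper: both directions rest on the symmetry $j\in H_i \iff i\in \mc H^{\iota_j}$, which converts the frequency of an element on one side of the duality into the cardinality of the corresponding irreducible set on the other, combined with Lemma~\ref{techlemma} and Propositions~\ref{dualnormalized}, \ref{tudo+} and~\ref{tudo*}, exactly as in the paper's argument. One inessential slip: Proposition~\ref{dualnormalized} gives $|J(\mc{N})^*|=|\mc{N}|$, but it does not make $J(\mc{N})^*$ normalized (its universe has size $|J(\mc{N})|$, so it is normalized only when every non-empty set of $\mc{N}$ is irreducible); since you only use the cardinality count and the independence supplied by Proposition~\ref{tudo*}, the argument is unaffected.
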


\begin{proof}
  Let $\mc{N}$ be an $n$-normalized family, and
  $J(\mc{N}) = \{I_1,\ldots,I_s\}$. We saw in the proof of
  Proposition~\ref{tudo+} that
  $J(\mc{N})^* = J(\mc{N})^\iota\cup\{\emptyset\}$. The hypothesis
  that this set satisfies the union-closed sets conjecture entails that
  there is an element $a\in U(J(\mc{N})^*)$ in at least half the sets
  of $J(\mc{N})^*$. Now, by the definition of $J(\mc{N})^\iota$, we
  have that $a\in J(\mc{N})^{\iota_j}\iff j\in I_a$. Therefore, if we
  have $a$ in at least $\frac{n}{2}$ sets of $J(\mc{N})^*$, we have
  $|I_a|\geq \frac{n}{2}$, and so $\mc{N}$ satisfies the Salzborn
  formulation of the union-closed sets conjecture.

  To prove the second statement, let $\F$ be an independent
  union-closed family. We know by Lemma~\ref{techlemma} that
  $\F=J(\F^*)^*$. By Proposition~\ref{dualnormalized}, $\F^*$ is
  normalized, and so, by Lemma~\ref{techlemma}, $J(\F^*)^\iota$ is
  union-closed. Therefore, every set in $\F=J(\F^*)^*$ is of the form
  $J(\F^*)^{\iota_j}$. Now, if $\F^*$ satisfies the Salzborn
  condition, then, there exists $I\in J(\F^*)$ with
  $|I|\geq \frac12 |\F^*|$, and thus $|I|\geq \frac12 |\F|$, by
  Proposition~\ref{dualnormalized}. Let $i$ be the index of $I$ in
  $J(\F^*)$ used when one constructs $J(\F^*)^* = \F$. Then, since
  $k\in I \iff i\in J(\F^*)^{\iota_k}$, it follows that $i$ belongs to
  at least half of the sets in $\F$.
\end{proof}

We saw in Proposition~\ref{tamanhoseparating} that, in a normalized
family, there is an element in all of its non-empty sets, which must
be unique since the family is, by definition, separating.

\begin{definition}
  Given a normalized family $\mc{N}$, we will denote by $a_{\mc{N}}$
  the unique element belonging to all of its non-empty sets.
\end{definition}

We are now ready to present one of the main results of this paper,
which introduces a reduction process for normalized families.

\begin{theorem}
  \label{ogrande}
  Let $\mc{N}$ be a $n$-normalized family and let $M$ be any minimal
  non-empty set of $\mc{N}$. Then the family
  $\mc{N}'=(\mc{N}\setminus \{M\})\ominus \{a_{\mc{N}}\}$ is
  $(n-1)$-normalized.
\end{theorem}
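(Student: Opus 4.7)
The plan is to verify each of the defining properties of an $(n-1)$-normalized family for $\mc{N}'=(\mc{N}\setminus\{M\})\ominus\{a_{\mc{N}}\}$: union-closedness, containment of $\emptyset$, separation, and the cardinality relation $|\mc{N}'|=|U(\mc{N}')|+1=n$. Two structural facts will be used throughout: (i) $a_{\mc{N}}$ lies in every non-empty set of $\mc{N}$, so removing $a_{\mc{N}}$ commutes with unions of such sets; and (ii) the minimality of $M$ implies that any non-empty $X\in\mc{N}$ with $X\neq M$ satisfies $X\not\subseteq M$, and in particular $M\cup X\supsetneq M$.

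Union-closedness is straightforward: for $A,B\in\mc{N}\setminus\{M\}$, $A\cup B\in\mc{N}$, and if $A\cup B=M$ then minimality forces $A,B\in\{\emptyset,M\}$, hence $A=B=\emptyset$, a contradiction. Containment of $\emptyset$ in $\mc{N}'$ is immediate. For $|\mc{N}'|=n$, the map $A\mapsto A\setminus\{a_{\mc{N}}\}$ can only collapse $\emptyset$ with $\{a_{\mc{N}}\}$; but if $\{a_{\mc{N}}\}\in\mc{N}$, then it is the unique minimal non-empty set of $\mc{N}$, so $M=\{a_{\mc{N}}\}$ and the collision is removed. For the universe count $|U(\mc{N}')|=n-1$, each $b\in U(\mc{N})\setminus\{a_{\mc{N}}\}$ lies in some non-empty $A\in\mc{N}$; if $A\neq M$ we are done, and otherwise $b\in M$, in which case for any non-empty $X\in\mc{N}\setminus\{M\}$ (which exists when $n\geq 2$) the set $M\cup X$ lies in $\mc{N}\setminus\{M\}$ by (ii) and contains $b$. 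The degenerate case $n=1$ forces $\mc{N}'=\{\emptyset\}$, which is trivially $0$-normalized.

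The main obstacle is verifying that $\mc{N}'$ is separating. The plan is to argue by contradiction: suppose distinct $b,c\in U(\mc{N}')$ are not separated in $\mc{N}'$. Since $b,c\neq a_{\mc{N}}$, this means $b\in A\iff c\in A$ for every $A\in\mc{N}\setminus\{M\}$. Because $\mc{N}$ itself is separating, $M$ must separate them, so we may assume $b\in M$ and $c\notin M$. Now, for every non-empty $X\in\mc{N}\setminus\{M\}$, by (ii) the set $M\cup X$ lies in $\mc{N}\setminus\{M\}$ and contains $b$, hence it must contain $c$; since $c\notin M$, this forces $c\in X$. But then $X$ itself contains $c$ without separating $b$ from $c$, so $b\in X$ as well. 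Combined with $b\in M$, this shows that $b$ lies in every non-empty set of $\mc{N}$, which by uniqueness gives $b=a_{\mc{N}}$, contradicting $b\in U(\mc{N}')$.
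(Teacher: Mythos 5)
Your proof is correct and follows essentially the same route as the paper's: the crucial separation step in both arguments rests on unioning a candidate set with $M$, using the minimality of $M$ to see that this union is a member of $\mc{N}\setminus\{M\}$, and deriving a contradiction with the uniqueness of $a_{\mc{N}}$. The differences are purely organizational — you avoid the paper's case split on whether $\{a_{\mc{N}}\}\in\mc{N}$, and you verify the universe count directly rather than by contradiction.
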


\begin{proof}
  Note that if $\{a_{\mc{N}}\}\in \mc{N}$, then $M=\{a_{\mc{N}}\}$,
  and $\mc{N}'=\mc{N}\ominus \{a_{\mc{N}}\}$.  We claim that the
  family $\mc{N}'$ is $(n-1)$-normalized. It is clear that $\mc{N}'$
  is union closed, $\emptyset\in\mc{N}'$, and
  $|\mc{N}'|=|\mc{N}|-1$. It remains to show that
  $|U(\mc{N}')|=|U(\mc{N})|-1$ and that $\mc{N}'$ is separating.

  In case $\{a_{\mc{N}}\}\in \mc{N}$, it is clear that
  $|U(\mc{N}')|=|U(\mc{N})|-1$. Also, for $x,y\in U(\mc{N}')$, if
  $N\in \mc{N}$ is such that $x\in N$, $y\not\in N$, then for
  $N' = N\setminus\{a_{\mc{N}}\}\in \mc{N}'$ it is still true that
  $x\in N'$, $y\not\in N'$.

  Let us now deal with the case $\{a_{\mc{N}}\}\not\in \mc{N}$, which
  implies $|M|\geq 2$.  Clearly $|U(\mc{N}')|\leq n-1$. Now,
  $|U(\mc{N}')|<n-1$ would imply the existence of an element in $M$
  not in any other set of $\mc{N}\setminus \{M\}$, thus $M=U(\mc{N})$,
  and $\mc{N} =\{\emptyset, M\}$, but then $|M|=1$, a
  contradiction. Therefore, $|U(\mc{N}')|=n-1$.

  Finally, let $x,y\in U(\mc{N}') =
  U(\mc{N})\setminus\{a_{\mc{N}}\}$. Since $\mc{N}$ is separating,
  there exists $N\in\mc{N}$ such that $|N\cap\{x,y\}|=1$. It is, of
  course, still true that $N' = N\setminus\{a_{\mc{N}}\}$ separates
  $x$ from $y$. If $N'\neq M\setminus \{a_{\mc{N}}\}$, we are done. It
  remains to consider the case where $M$ is the only set that
  separates $x$ from $y$ in $\mc{N}$. In this case, there cannot be
  any set $\emptyset\neq L\in \mc{N}$ such that
  $\{x,y\}\not\subseteq L$, otherwise $L\cup M$ would also separate
  $x$ and $y$, and by the minimality of $M$, $M\cup L\neq M$. So,
  except for $M$, all other sets of $\mc{N}$ must contain
  $\{x,y\}$. Without loss of generality, we may assume that $x\in
  M$. It follows that $x$ belongs to all sets of $\mc{N}$,
  contradicting the uniqueness of the element $a_{\mc{N}}$.
\end{proof}

\begin{corollary}
  For every normalized family $\mc{N}$ there are distinct elements
  $a_i\in U(\mc{N})$ with frequency at least $i$, for all
  $1\leq i \leq n$.
\end{corollary}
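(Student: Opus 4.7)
The plan is a straightforward induction on $n$, using Theorem~\ref{ogrande} as the engine and the distinguished element $a_{\mc{N}}$ as the ``top-frequency witness'' at each stage.

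For the base case $n=1$, a $1$-normalized family must be $\mc{N}=\{\emptyset,\{a\}\}$, and $a$ has frequency $1$, so taking $a_1=a$ does the job. For the inductive step, I would let $\mc{N}$ be any $n$-normalized family ($n\ge 2$), pick a minimal non-empty set $M\in\mc{N}$ (this exists because $\mc{N}$ is finite and non-empty after removing $\emptyset$), and form
$$\mc{N}'=(\mc{N}\setminus\{M\})\ominus\{a_{\mc{N}}\},$$
which by Theorem~\ref{ogrande} is $(n-1)$-normalized. The inductive hypothesis applied to $\mc{N}'$ then provides distinct elements $a_1,\ldots,a_{n-1}\in U(\mc{N}')$ whose frequencies in $\mc{N}'$ are at least $1,2,\ldots,n-1$ respectively.

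The key observation that bridges $\mc{N}'$ back to $\mc{N}$ is that, for any $x\in U(\mc{N}')$, the frequency of $x$ in $\mc{N}$ is at least its frequency in $\mc{N}'$. Indeed, the map $N\mapsto N\setminus\{a_{\mc{N}}\}$ is a bijection between $\mc{N}\setminus\{M\}$ and $\mc{N}'$ (this is part of what is established inside the proof of Theorem~\ref{ogrande}, where it is shown that $|\mc{N}'|=|\mc{N}|-1$), and since $x\ne a_{\mc{N}}$, removing $a_{\mc{N}}$ does not change membership of $x$. Hence the frequency of $x$ in $\mc{N}$ equals its frequency in $\mc{N}\setminus\{M\}$, which is either the frequency in $\mc{N}'$ or that number plus~$1$ (depending on whether $x\in M$). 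In either case it is at least the frequency in $\mc{N}'$, so each $a_i$ retains a frequency of at least $i$ when re-interpreted inside $\mc{N}$.

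It then only remains to supply an element of frequency at least $n$. For this I would use $a_n:=a_{\mc{N}}$, which by Proposition~\ref{tamanhoseparating} (and the definition given just before the theorem) lies in all $n$ non-empty sets of $\mc{N}$, so its frequency in $\mc{N}$ is exactly $n$. Moreover $a_{\mc{N}}\notin U(\mc{N}')$, so $a_n$ is automatically distinct from $a_1,\ldots,a_{n-1}$. The tuple $(a_1,\ldots,a_n)$ thus satisfies the required conclusion. There is essentially no obstacle: the only subtle point is the frequency-comparison between $\mc{N}$ and $\mc{N}'$, and that follows from the bijection $N\mapsto N\setminus\{a_{\mc{N}}\}$ already implicit in Theorem~\ref{ogrande}.
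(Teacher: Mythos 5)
Your proof is correct and is essentially the paper's own argument written out in full: the paper simply says the result ``follows from Proposition~\ref{tamanhoseparating} by applying Theorem~\ref{ogrande} repeatedly,'' and your induction, with $a_n=a_{\mc{N}}$ as the frequency-$n$ witness and the bijection $N\mapsto N\setminus\{a_{\mc{N}}\}$ justifying that frequencies do not decrease when passing back from $\mc{N}'$ to $\mc{N}$, is precisely that repeated application made explicit.
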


\begin{proof}
  Follows directly from the Proposition~\ref{tamanhoseparating} by
  applying Theorem~\ref{ogrande} repeatedly.
\end{proof}

Notice that in the reduction from $\mc{N}$ to $\mc{N}'$ we remove a
minimal set. To preserve closure under union we can only remove
irreducible elements of the family. While it may be possible to weaken
the condition of minimality, it can not be replaced with
irreducibility, in general, as the next example shows.

\begin{example}
  Take the normalized family
  $$\mc{N}=\{\emptyset,
  \{5,7\},\{3,6,7\},\{3,5,6,7\},\{2,4,5,6,7\},[7]\setminus\{2\},
  [7]\setminus\{1\},[7]\}.$$ Then using the irreducible set
  $M= [7]\setminus\{2\}$ one gets
  $$\mc{N}'=\{\emptyset,
  \{5\},\{3,6\},\{3,5,6\},\{2,4,5,6\}, [6]\setminus\{1\},[6]\}$$ which
  is not separating, as no set separates $2$ from $4$.
\end{example}

Given an $n$-normalized family $\mc{N}$, we can decompose it as
$\mc{N}=\mc S_{\lambda_0}\cup \mc S_{\lambda_1}\cup \ldots \cup
\mc S_{\lambda_s}$, with $\lambda_0<\lambda_1<\cdots<\lambda_s$, where
$\mc S_\ell$ is the set of all subsets of $\mc{N}$ with $\ell$
elements. We set $k_i=|\mc S_{\lambda_i}|$.  Since $\emptyset\in \mc{N}$,
one has that $\lambda_0 = 0$ and $k_0=1$. Note that $|\mc S_1|\leq 1$. We
have the following corollary by applying
Proposition~\ref{tamanhoseparating} to all possible reductions.

\begin{corollary}
  Let $\mc{N}$ be an $n$-normalized family.  Then, using the notations
  such defined, there are $k_i$ elements with frequency at least
  $n-\sum\limits_{j=0}^{i-1}k_j$, for all $0\leq i\leq s$.
\end{corollary}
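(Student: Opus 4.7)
The plan is to apply Theorem~\ref{ogrande} iteratively in a carefully chosen order and then use the freedom in the choice of the last minimal set to extract $k_i$ different elements. The base case $i=0$ is immediate from Proposition~\ref{tamanhoseparating}, which provides the element $a_{\mc{N}}$ of frequency $n$, accounting for $k_0 = 1$ element with frequency at least $n - T_0 = n$.

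For $i \geq 1$, the plan is to first perform a sequence of $T_i - 1 := k_1 + k_2 + \cdots + k_{i-1}$ reductions that removes, in order, all $k_1$ sets of original size $\lambda_1$, then all $k_2$ sets of original size $\lambda_2$, and so on through size $\lambda_{i-1}$. The key technical verification is that, at each step, the chosen set is indeed minimal in the current family $\mc{N}_t$. Tracking sizes, each surviving set $N \in \mc{N}$ loses exactly $t$ elements after $t$ reductions, because every $a$-element removed so far was universal in its family and hence belonged to $N$; thus the image of $N$ in $\mc{N}_t$ has size $|N| - t$. Therefore, once all sets of original sizes $\lambda_1, \ldots, \lambda_j$ have been removed, the images of the surviving sets of original size $\lambda_{j+1}$ attain the minimum non-zero size in the current family and are consequently minimal. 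Separation of each reduced family, which is part of the conclusion of Theorem~\ref{ogrande}, forbids distinct survivors from having the same image and in particular keeps these minimal-size images non-empty.

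After these $T_i - 1$ reductions, we land in a normalized family $\mc{N}^*$ whose minimal non-empty sets include the $k_i$ images of $\mc{S}_{\lambda_i}$. For each such minimal set $M$, one more application of Theorem~\ref{ogrande} yields a reduction $\mc{N}^*_M$, and Proposition~\ref{tamanhoseparating} produces a universal element $a_M \in U(\mc{N}^*_M)$. Unwinding the identifications, $a_M$ is an element of $U(\mc{N})$ belonging to every surviving non-empty set of $\mc{N}^*$ other than $M$, hence of frequency at least $n - T_i$ in $\mc{N}$. For distinctness: if $a_M = a_{M'} = b$ for some $M \neq M'$ in $\mc{S}_{\lambda_i}$, then $b$ would lie in every non-empty set of $\mc{N}^*$, forcing $b = a_{\mc{N}^*}$ by uniqueness; but $a_M \neq a_{\mc{N}^*}$ by the construction of the reduction, a contradiction. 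This yields the required $k_i$ distinct elements.

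The main obstacle I expect is precisely the layered-reduction verification in the second paragraph: one must simultaneously ensure that the targeted set is minimal in the current family and that its image remains strictly non-empty. Both follow from the size-tracking observation together with the separation inherited from repeated application of Theorem~\ref{ogrande}, but a clean argument requires careful bookkeeping of which $a$-elements have been removed so far and of which original-size classes remain among the survivors.
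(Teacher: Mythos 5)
Your proof is correct and follows essentially the same route as the paper: successively remove the sets of minimal length class by class, and obtain distinctness of the $k_i$ elements at each stage from the observation that two distinct minimal sets $M\neq M'$ cannot yield the same universal element, since that element would then lie in every non-empty set and coincide with $a_{\mc{N}^*}$, a contradiction. The paper's proof is just a terser version of this, leaving the size-tracking bookkeeping of your second paragraph implicit.
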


\begin{proof}
  Let $M_1$ and $M_2$ be two minimal sets of $\mc{N}$, and set
  $\mc{N}_i'=(\mc{N}\setminus \{M_i\})\ominus \{a_{\mc{N}}\}$, for
  $i=1,2$. If $a_{\mc{N}_1'}= a_{\mc{N}_2'}$, then this element, which
  is distinct from $a_{\mc{N}}$, would be in all sets of $\mc{N}$,
  contradicting the uniqueness of $a_{\mc{N}}$. The claim follows
  easily from this, by reducing $\mc{N}$ by successively removing sets
  of minimal length.
\end{proof}

Using the reduction process introduced in Theorem~\ref{ogrande} for
normalized families, one can introduce a reduction process for
arbitrary union-closed families as follows.  Given such a family $\F$,
we will call $\F_\downarrow=J((\F^*)')^*$ a \emph{child} of $\F$. It
depends on the minimal set chosen to be removed from $\F^*$ to form
$(\F^*)'$, and different choices may lead to non-isomorphic families,
but we do not include it in the notation, which would become a bit too
heavy. However, please keep in mind that there may several distinct
children of a family, and $\F_\downarrow$ just denotes one of
them. Naturally, $\F$ is called a parent of $\F_\downarrow$.  Children
of the same parent are referred as \emph{siblings}, and a family
obtained by successive reductions from a given family of sets is
called a \emph{descendent} of that family.

Note that it follows from the results seen above that
$|\F_\downarrow| = |\F|-1$, when $\F$ contains the empty set, which
will assume from now on.  As usual, we define,
$\F_{\downarrow\downarrow} = (\F_\downarrow)_\downarrow$, etc., and
$\F_{\downarrow k} = (\F_{\downarrow (k-1)})_\downarrow$, which again
may depend on various choices of minimal sets, not conveyed by the
notation. One has $|\F_{\downarrow k}| = |\F|-k$. The families
$\F_{\downarrow k}$, for $k\in\N$, are the \emph{descendents} of the
family $\F$. It follows from Proposition~\ref{tudo*} that, for any
family, all its descendents are independent.

The relation between the family $\F$ and a family $\F_\downarrow$
seems rather mysterious, in the sense that they may have different
universes, and even when the universe is the same the relation between
them does not seem to be evident.

\begin{example}
  Consider again the union-closed family $\F$ of
  Example~\ref{P3m1},
  $$\mc{F}=
  \{\emptyset,\{2\},\{3\},\{1,2\},\{1,3\},\{2,3\},\{1,2,3\}\}.$$
  There is only one minimal set in $\F^*$, namely
  $\{3,4,6\}$. It yields:
  $$(\F^*)'=\{\emptyset,\{1,3,5\},\{2,4,5\},\{1,3,4,5\},\{2,3,4,5\}
  \{1,2,3,4,5\}\}.$$
  Then $J((\F^*)')=\{\{1,3,5\},\{2,4,5\},\{1,3,4,5\},\{2,3,4,5\}\}$,
  and thus
  $$\F_\downarrow=J((\F^*)')^*=\{\emptyset,\{1,3\},\{2,4\},\{1,3,4\},
  \{2,3,4\},\{1,2,3,4\}\},$$
  which looks quite different from the original family.
\end{example}

We can picture the two parallel reduction processes in the following
commutative diagram (the left dashed up arrow commutes with the others
when $\F$ is an independent family):
\begin{equation}
  \label{eq:diagram}
  \begin{tikzcd}
    & J((\F^*)')^* \arrow[d,equal] &&& \\
    \F \arrow[r,"\square_\downarrow"] \arrow[d,"\square^*"] &
    \F_{\downarrow }\arrow[d,shift left=0.5ex]\arrow[r] &
    \F_{\downarrow \downarrow} \arrow[r,dashed]\arrow[d,shift
    left=0.5ex] & \F_{\downarrow k}
    \arrow[d,shift left=0.5ex]\arrow[r,dashed] &\hphantom{F}\\
    \F^* \arrow[u,shift left=1ex,dashed]\arrow[r,"\square'"] &
    (\F^*)' \arrow[u,shift left=0.5ex,"\square^*\circ J"]\arrow[r] &
    (\F^*)''\arrow[r,dashed] \arrow[u,shift left=0.5ex]&
    (\F^*)^{(k)}\arrow[r,dashed]
    \arrow[u,shift left=0.5ex]&\hphantom{F}\\
    & (\F_\downarrow)^* \arrow[u,equal] &&&
  \end{tikzcd}
\end{equation}
where the down arrows in the middle are given by the ``taking the
dual'' operator, $\square^*$; the right arrows on the second row are
given by the reduction operator $\square'$; the right arrows on the
first row by the reduction operator
$\square_\downarrow = \square^*\circ J\circ\square'\circ\square^*$;
and the up arrows in the middle by the $\square^* \circ J$
operator. Note that, by Theorem~\ref{ogrande} and
Proposition~\ref{tudo+}, we have that $(\F_{\downarrow k})^* = (\F^*)^{(k)}$
(i.e.~the upper arrow followed by the down arrow is the identity
operator, $Id$, or $\square^*\circ\square^*\circ J = Id$, for
normalized families). From Lemma~\ref{techlemma} it follows that, for
independent families, $\square^*\circ J \circ\square^*= Id$, i.e.~the
down arrow followed by the up arrow is the identity, for independent
families.
 
\begin{remark}
  If $\F_{\downarrow k}$ satisfies the Frankl conjecture and has an
  odd number of sets, then $\F_{\downarrow (k+1)}$ also satisfies the
  conjecture. If $\F_{\downarrow k}$ has an even number of sets and
  \emph{strictly} satisfies the Frankl conjecture, meaning that there
  is an element in strictly more than half the sets, then
  $\F_{\downarrow (k+1)}$ also {strictly} satisfies the conjecture.
\end{remark}

\begin{remark}  
  Given a normalized family $\mc{N}$, the irreducibles of
  $\mc{N}' = (\mc{N}\setminus\{M\})\ominus\{a_{\mc{N}}\}$ that do not
  belong to set $J(\mc{N})\ominus\{a_{\mc{N}}\}$ have cardinality
  bigger that $|M|$.
\end{remark}
\section{Refinement of a conjecture of Poonen and descendents of power
  sets} 
\label{frankl}

It is easy to see that, for any given $n$-normalized family $\mc{N}$,
there is always at least one family $\mc{M}$ such that
$\mc{N} = \mc{M}'$, namely the family
$\mc{M}=\{N\cup\{n+1\}: N\in \mc{N}\}\cup\{\emptyset\}$. Note that
$\mc{M}$ is also normalized. Now, if $\F$ is an independent family,
then take $\mc{M}$ such that $\mc{M}' = \F^*$, and
$\mc{T} = J(\mc{M})^*$, which is an independent family as seen in the
proof of Proposition~\ref{tudo*}. Then $\mc{T}^* = \mc{M}$, by
Proposition~\ref{tudo+}, and thus
$\mc{T}_\downarrow = J((\mc{T}^*)')^* = J(M')^*= J(\F^*)^* = \F$, by
Lemma~\ref{techlemma}. This shows that, given any independent family
$\F$ there is always a family $\mc{T}$ such that
$\F=\mc{T}_\downarrow$.  We will refer to this family $\mc{T}$ as
\emph{the trivial parent} of $\F$. In other words, the operator
$\square'$ is surjective on normalized families, while the operator
$\square_\downarrow$ is surjective on independent families. It is
clear that if an independent family satisfies the Frankl conjecture,
so does its trivial parent.

Poonen conjectured in \cite{Poo92} that every union-closed family is
either a power set or has an element in strictly more than half the
sets.

\begin{conjecture}
  \label{poonen}
  Let $\mc{F}$ be a union-closed family of sets. Unless $\F$ is a
  power set, it contains an element that appears in strictly more than
  half of the sets
\end{conjecture}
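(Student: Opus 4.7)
The plan is to reduce Conjecture~\ref{poonen} to the apparently weaker Conjecture~\ref{nossaguess}, and then attack the latter using the normalized-family reduction machinery developed in the paper; this is precisely the strategy announced by Theorem~\ref{poonenequiv}. The direction Conjecture~\ref{poonen}$\Rightarrow$Conjecture~\ref{nossaguess} is immediate: if the most frequent element of $\F$ is in exactly half the sets, then it is not in strictly more than half, so Conjecture~\ref{poonen} forces $\F$ to be a power set.

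For the reverse implication, suppose Conjecture~\ref{nossaguess} holds, and let $\F$ be a union-closed family that is not a power set. The trick is to adjoin a fresh element $\ast\notin U(\F)$ and consider
$$\mc{G} = \F \cup \{F \cup \{\ast\} : F \in \F\}.$$
Then $\mc{G}$ is union-closed, $|\mc{G}| = 2|\F|$, and $|\mc{G}_\ast| = |\F| = |\mc{G}|/2$, while for each $b \in U(\F)$ one has $|\mc{G}_b| = 2|\F_b|$. Hence if the most frequent element of $\F$ satisfies $|\F_a| \leq |\F|/2$, then the most frequent element of $\mc{G}$ has frequency exactly $|\mc{G}|/2$. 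A direct check shows $\mc{G}$ is a power set if and only if $\F$ is, so Conjecture~\ref{nossaguess} applied to $\mc{G}$ contradicts the assumption that $\F$ is not a power set. This establishes Theorem~\ref{poonenequiv}, and reduces Poonen's conjecture to the special case of families that \emph{sharply} satisfy Frankl's bound.

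It remains to attack Conjecture~\ref{nossaguess} itself. My plan would be induction on $|\F|$, using the reduction $\F \mapsto \F_\downarrow$: replace $\F$ by its dual normalized family $\F^*$, apply Theorem~\ref{ogrande} to obtain $(\F^*)'$, and recover $\F_\downarrow = J((\F^*)')^*$. One would track how the sharp equality $|\F_a| = |\F|/2$ propagates through this reduction, hoping to show that either $\F_\downarrow$ inherits a sharp counterexample (finishing by induction) or that $\F$ is forced into a rigid structure that coincides with a power set. Theorem~\ref{descpower} suggests a complementary strategy: if one could prove that every family with maximum frequency exactly $|\F|/2$ is a descendant of some power set, the conclusion would follow directly from that theorem.

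The principal obstacle is that Conjecture~\ref{nossaguess} is, via Theorem~\ref{poonenequiv}, equivalent to the full conjecture, so the reduction only yields a structural rephrasing rather than a genuine simplification. Moreover, controlling the frequencies under $\square_\downarrow$ is delicate: as the example preceding diagram~(\ref{eq:diagram}) illustrates, the passage from $\F$ to $\F_\downarrow$ is mediated by the opaque round trip through $\F^*$, and the choice of minimal set removed at each step introduces non-canonical branching. Any real progress along this route would require a precise understanding of how the sharp equality $|\F_a| = |\F|/2$ interacts with the irreducibles of $\F^*$, and of which siblings preserve (or break) the power-set structure — and this is precisely the crux where Poonen's conjecture has resisted proof for over three decades.
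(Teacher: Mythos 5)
The statement you were asked to prove is Conjecture~\ref{poonen} itself, which the paper does not prove: it is an open conjecture, shown in Theorem~\ref{poonenequiv} to be equivalent to Frankl's conjecture. Your proposal correctly stops short of a proof --- the last two paragraphs are a research plan (induction through $\square_\downarrow$, or showing that sharp families descend from power sets), and you candidly note that the reduction to Conjecture~\ref{nossaguess} is a rephrasing rather than a simplification. So, judged strictly as a proof of the statement, the gap is the entire content of the conjecture; but that is exactly where the paper stands as well, so there is no authorial proof to measure you against.

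What is genuinely worth comparing is your argument for the equivalence, i.e.\ for Theorem~\ref{poonenequiv}. The paper proves that Conjecture~\ref{nossaguess} implies Frankl's conjecture by passing to the Salzborn formulation via Theorem~\ref{franklsalz}, taking an $n$-normalized counterexample, applying $n-2\lambda+1$ trivial-parent constructions so that the maximal irreducible has size exactly half the family, and then dualizing back before invoking Conjecture~\ref{nossaguess}. Your argument is the primal, one-step version of the same padding idea: given $\F$ with no element in more than half the sets, the doubled family $\mc G=\F\cup\{F\cup\{\ast\}:F\in\F\}$ is union-closed, has most frequent element of frequency exactly $|\mc G|/2$ (every old frequency doubles to at most $|\F|=|\mc G|/2$, and $\ast$ attains it), and is a power set if and only if $\F$ is, since $\mc G=\mc P(V)$ forces $\F=\{G\in\mc G:\ast\notin G\}=\mc P(U(\F))$. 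Applying Conjecture~\ref{nossaguess} to $\mc G$ then yields Conjecture~\ref{poonen} directly. This checks out and is simpler and more self-contained than the paper's route, avoiding the duality and trivial-parent machinery entirely; it does not, however, touch the remaining and essential difficulty, which is Conjecture~\ref{nossaguess} itself.
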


We propose the following seemingly weaker version of Poonen
conjecture.
\begin{conjecture}
  \label{nossaguess}
  Let $\mc{F}$ be a union-closed family such that the most frequent
  element belongs to exactly half the sets in $\F$. Then $\F$ must be
  a power set.
\end{conjecture}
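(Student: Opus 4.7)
The plan is to attempt a direct proof by strong induction on $|U(\F)|$. Suppose $\F$ is a union-closed family whose most frequent element $a$ satisfies $|\F_a|=|\F|/2$; the goal is to conclude $\F=\mc{P}(U(\F))$. By Proposition~\ref{indep enough}, I first reduce to the case where $\F$ is independent, carefully checking that the equality case propagates through the reduction $\F\mapsto\F\ominus\{a\}$ used there (since that reduction preserves $|\F|$ and only affects the element being collapsed, which was necessarily tied or less frequent than $a$). The base case $|U(\F)|\leq 1$ is immediate, and independence ensures good separation, in particular $T_0$.

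The key steps are as follows. Since $|\F_a|=|\F|/2=|\F_{\tilde a}|$, both $\F_a$ and $\F_{\tilde a}$ are union-closed subfamilies of equal size. I would establish that the map $\phi\colon \F_{\tilde a}\to \F_a$ sending $O\mapsto O\cup\{a\}$ is a bijection; it is visibly injective, so the issue is surjectivity, which amounts to the $\T{D}$ axiom holding at $a$, i.e.~every $O\in\F_{\tilde a}$ is $a$-problematic. Suppose toward a contradiction that some $O\in\F_{\tilde a}$ has $O\cup\{a\}\notin\F$; by double-counting the incidence $(b,F)$ with $b\in F\in\F$ for $b\neq a$, and using the frequency caps $|\F_b|\leq |\F|/2$ together with $|\F_a|=|\F|/2$, I would extract a structural deficiency in $\F_a$ that is incompatible with $|\F_a|$ already equaling $|\F_{\tilde a}|$. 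With $\phi$ established, $\F=\F_{\tilde a}\sqcup \phi(\F_{\tilde a})$, so the family is completely recoverable from $\F_{\tilde a}$. It then remains to verify the inductive hypothesis on $\F_{\tilde a}$: for each $b\neq a$, the bijection $\phi$ gives $|(\F_{\tilde a})_b|=|\{O\in\F_{\tilde a}\colon b\in O\}|=|\F_b|-|\{F\in\F_a\colon b\in F\}|$, and one checks that the most frequent element of $\F_{\tilde a}$ hits precisely $|\F_{\tilde a}|/2$, otherwise pushing back through $\phi$ would give an element of $\F$ strictly exceeding the $|\F|/2$ bound. By induction $\F_{\tilde a}=\mc{P}(U(\F)\setminus\{a\})$, and hence $\F=\mc{P}(U(\F))$.

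The main obstacle is the bijection step, i.e.~converting the global frequency equality $|\F_a|=|\F|/2$ into the local structural statement that $a$ is $\T{D}$. The hypothesis alone does not rule out a ``missing'' $a$-completion $O\cup\{a\}$; one must leverage the tightness across all elements simultaneously, which is a delicate counting argument. This step is in fact the entire content of the conjecture, since by Theorem~\ref{poonenequiv} Conjecture~\ref{nossaguess} is equivalent to Poonen's refinement of the Frankl conjecture and hence strictly stronger than Frankl itself. My plan should therefore be read as isolating the precise local rigidity principle (``$\T{D}$ at any frequency-tight element'') as the crux, and as providing the inductive scaffolding around it; completing the bijection step unconditionally would resolve both the equality case of Frankl and Poonen's conjecture in one stroke.
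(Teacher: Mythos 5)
This statement is one of the paper's open conjectures: the authors do not prove it anywhere, they only show (Theorem~\ref{poonenequiv}) that it is equivalent to Poonen's Conjecture~\ref{poonen}, which in turn implies Frankl's conjecture itself. So there is no proof in the paper to compare against, and a complete argument here would be a major new theorem rather than a reconstruction of the paper's reasoning. Your proposal is, by your own admission, not such an argument: the step on which everything hinges --- showing that the tightness $|\F_a|=|\F|/2$ forces every $O\in\F_{\tilde a}$ to satisfy $O\cup\{a\}\in\F$, i.e.\ that the family is $\T{D}$ at the frequency-tight element --- is left as an unexecuted ``delicate counting argument''. You correctly flag that this step carries the entire content of the conjecture, but that means what you have written is a framing of the problem, not a proof of it.

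Beyond that admitted gap, two pieces of the surrounding scaffolding are also broken. First, the reduction to independent families via Proposition~\ref{indep enough} does not preserve the statement in either direction: if $\F$ is dependent with $\F_a=\bigcup_{b\in S}\F_b$ and $a$ is the unique element of frequency $|\F|/2$, then $\F\ominus\{a\}$ may have no element of frequency exactly half, so the inductive hypothesis no longer applies; and conversely, knowing that $\F\ominus\{a\}$ is a power set cannot yield that $\F$ is one, since $\F$ has the same number of sets but a strictly larger universe. The family $\{\emptyset,\{2\},\{1,3\},\{1,2,3\}\}$ (a power set with the element $1$ ``doubled'' into $\{1,3\}$) shows that the conjecture as literally stated must be read for separating families, or with ``power set'' understood up to identification of inseparable elements, and your reduction is precisely where this issue surfaces. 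Second, even granting the bijection $\phi$, your inductive step needs the most frequent element of $\F_{\tilde a}$ to hit \emph{exactly} $|\F_{\tilde a}|/2$; the bijection only gives the upper bound $|(\F_{\tilde a})_b|\leq|\F_{\tilde a}|/2$, and excluding the possibility that every element falls \emph{strictly} below half amounts to invoking Frankl's conjecture for $\F_{\tilde a}$, which is not available inside your induction. So the proposal has a genuine gap at its core and further gaps in the reduction and in the inductive step.
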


We can now prove that these two conjectures are in fact
equivalent. The advantage of this new statement is that it only
concerns families \emph{sharply} satisfying Frankl's conjecture,
apparently making no claim on the original conjecture: naturally, the
non-trivial part is proving that Conjecture~\ref{nossaguess} implies
Frankl's conjecture.

\begin{theorem}\label{poonenequiv}
  Conjectures~\ref{poonen} and~\ref{nossaguess} are equivalent.
\end{theorem}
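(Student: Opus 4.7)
The implication Conjecture~\ref{poonen}~$\Rightarrow$~Conjecture~\ref{nossaguess} is immediate by contraposition: if some non--power-set union-closed family $\F$ had its most frequent element at frequency exactly $|\F|/2$, then no element of $\F$ would appear in strictly more than half the sets, directly contradicting Conjecture~\ref{poonen}. My plan is therefore to concentrate on the reverse implication, which I obtain by a short ``doubling'' construction that manufactures the sharp case of Conjecture~\ref{nossaguess} on demand.

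\textbf{Main steps.} Assume Conjecture~\ref{nossaguess}. Let $\F$ be a union-closed family that is not a power set, and suppose for contradiction that no element belongs to strictly more than $|\F|/2$ sets. Set $m = \max_{a \in U(\F)} |\F_a| \leq |\F|/2$. If $m = |\F|/2$, the most frequent element of $\F$ belongs to exactly half the sets, so Conjecture~\ref{nossaguess} forces $\F$ to be a power set, contradiction. Otherwise $m < |\F|/2$ strictly. Pick a fresh symbol $x \notin U(\F)$ and define
$$
\mc{G} = \F \cup \{F \cup \{x\} : F \in \F\}.
$$
A routine check shows $\mc{G}$ is union-closed, $|\mc{G}| = 2|\F|$, and $U(\mc{G}) = U(\F) \cup \{x\}$. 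Moreover $|\mc{G}_x| = |\F| = |\mc{G}|/2$, while for every $y \in U(\F)$ one has $|\mc{G}_y| = 2|\F_y| \leq 2m < |\F| = |\mc{G}|/2$. Thus $x$ is the unique most frequent element of $\mc{G}$, attaining exactly half the sets. Applying Conjecture~\ref{nossaguess} to $\mc{G}$ yields that $\mc{G}$ is a power set, so by cardinality $\mc{G} = \mc{P}(U(\F) \cup \{x\})$. The sets of $\mc{G}$ not containing $x$ are precisely the members of $\F$, and they form exactly $\mc{P}(U(\F))$, so $\F$ is a power set, a contradiction.

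\textbf{Main obstacle.} There is no serious technical difficulty; the only step requiring any real insight is converting the ``strictly less than half'' case into one where Conjecture~\ref{nossaguess} can be triggered. The doubling construction does this cleanly: the fresh element $x$ automatically lands in exactly $|\mc{G}|/2$ sets, while every old element's frequency merely doubles and so stays strictly below $|\mc{G}|/2$, making $x$ uniquely most frequent at the sharp threshold. The conclusion that $\mc{G}$ is a power set then descends to $\F$ by restricting to the sets of $\mc{G}$ not containing $x$.
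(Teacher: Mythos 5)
Your proof is correct, and it takes a genuinely different and more elementary route than the paper's. For the nontrivial direction you derive Conjecture~\ref{poonen} directly from Conjecture~\ref{nossaguess}: when the maximum frequency $m$ of a non-power-set $\F$ is strictly below $|\F|/2$, you adjoin a fresh element $x$ and pass to $\mc{G}=\F\cup\{F\cup\{x\}:F\in\F\}$, so that $x$ sits at exactly half of $|\mc{G}|=2|\F|$ while every old element stays strictly below half; Conjecture~\ref{nossaguess} then forces $\mc{G}$, and hence $\F$, to be a power set. The paper instead proves that Conjecture~\ref{nossaguess} implies Frankl's conjecture (which, combined with Conjecture~\ref{nossaguess}, gives Conjecture~\ref{poonen}): it passes to the Salzborn formulation, takes a normalized counterexample, and applies the ``trivial parent'' operation $n-2\lambda+1$ times to pad the largest irreducible set up to exactly half the family size, then invokes Conjecture~\ref{nossaguess} on the resulting independent dual family and derives a contradiction from the presence of a singleton. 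Your doubling construction is the primal-side analogue of that padding, but it reaches the sharp threshold in a single step and avoids all of the dual-family and normalized-family machinery, at the cost of not exhibiting the link to the Salzborn formulation that the paper's argument showcases. The only caveat is the degenerate families $\F=\emptyset,\{\emptyset\}$ (where ``the most frequent element'' is undefined), which the paper's proof also implicitly sets aside.
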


\begin{proof}
  Clearly Conjecture~\ref{poonen} implies Conjecture~\ref{nossaguess}.
  We will prove that Conjecture~\ref{nossaguess} implies
  Conjecture~\ref{origconjecture}. This is enough since
  Conjecture~\ref{nossaguess} together with
  Conjecture~\ref{origconjecture} implies Conjecture~\ref{poonen}.
  Assume that Conjecture~\ref{nossaguess} holds while Frankl's
  conjecture does not. By Theorem~\ref{franklsalz} there would be a
  family $\F$ such that $\F^*$ is an $n$-normalized counterexample to
  the Salzborn formulation of the conjecture. Put
  $\lambda=\max\{|I|: I\in J(\F)\}$, and choose a set $I\in J(\F)$
  such that $|I|=\lambda$. Obviously, $|I|<\frac{n+1}{2}$. If we
  consider $n-2\lambda+1$ successive trivial parents of $\F$ we obtain
  a family $\mc{T}$ such that $T=I\cup\{n+1,\ldots , 2n-2\lambda +1\}$
  is a maximal element of $J(\mc{T})$ and
  $\frac{|T|}{|\mc{T}|}=\frac{n-\lambda +1}{2n-2\lambda +2}=\frac
  12$. By Proposition~\ref{tudo+}, $\mc{T}=J(\mc{T})^{**}$. But, as
  seen in Proposition~\ref{tudo*}, $J(\mc{T})^*$ is an independent
  family, and therefore, by Lemma~\ref{techlemma} applied to
  $\mc{F}=\mc{G}=J(\mc{T})^*$, we have that
  $T\in J(\mc{T})=J(J(\mc{T}^*)^*)=(J(\mc{T})^*)^\iota$.  Hence, the
  largest set $(J(\mc{T})^*)^{\iota_j}$, for $j\in U(J(\mc{T})^*)$, is
  $T$ with cardinal $n-\lambda +1$. Hence, the most frequent element
  in $J(\mc{T})^*$ has frequency $n-\lambda +1$. Using
  Proposition~\ref{dualnormalized}, we see that
  $|J(\mc{T})^*|=|(J(\mc{T})^*)^*|=|\mc{T}|=2n-2\lambda +2.$ By the
  hypothesis, we deduce that $J(\mc{T})^*$ is a power set, but that is
  absurd since, by construction, $\mc{T} = (J(\mc{T})^*)^*$ has a
  singleton.
\end{proof}

We now prove the Frankl conjecture for families that are descendents
of power set families. To do so, we start with a technical lemma.

\begin{lemma} \label{ineq binomial} Let $n\geq 6$ be an integer and
  $2\leq k \leq \frac n 2$. Then
  $$2^{k+1}-1\leq \sum_{s=0}^{k-1} {n\choose s}.$$ 
\end{lemma}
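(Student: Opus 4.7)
The plan is to induct on $k$, with the base case $k=2$ responsible for the hypothesis $n \geq 6$. At $k=2$, the left-hand side is $2^3 - 1 = 7$ and the right-hand side is $\binom{n}{0} + \binom{n}{1} = n+1$, so the inequality is exactly $n \geq 6$.

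For the inductive step, assume the bound holds for $k-1$, where $3 \leq k \leq n/2$. Peeling off the top term of the sum,
\[
\sum_{s=0}^{k-1}\binom{n}{s} \;=\; \binom{n}{k-1} + \sum_{s=0}^{k-2}\binom{n}{s} \;\geq\; \binom{n}{k-1} + (2^k - 1),
\]
so it is enough to prove the auxiliary inequality $\binom{n}{k-1} \geq 2^k$ whenever $k \geq 3$ and $n \geq 2k$.

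For the auxiliary, since $n \mapsto \binom{n}{k-1}$ is nondecreasing in $n$, it suffices to handle the extremal case $n = 2k$, namely $\binom{2k}{k-1} \geq 2^k$ for $k \geq 3$. A short nested induction on $k$ settles this: the base is $\binom{6}{2} = 15 \geq 8$, and the growth ratio
\[
\frac{\binom{2(k+1)}{k}}{\binom{2k}{k-1}} \;=\; \frac{2(k+1)(2k+1)}{k(k+2)}
\]
exceeds $2$ for every $k \geq 2$, since $(k+1)(2k+1) - k(k+2) = k^2 + k + 1 > 0$, which propagates the required doubling on the right-hand side.

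The main obstacle is simply isolating the right auxiliary inequality $\binom{2k}{k-1} \geq 2^k$; once one notices this is exactly the gap that must be peeled off at each induction step, both inductions are mechanical. A Stirling-style estimate, or use of the bound $\binom{2k}{k} \geq 4^k/(2k+1)$ combined with $\binom{2k}{k-1} = \tfrac{k}{k+1}\binom{2k}{k}$, would accomplish the same task, but the direct doubling argument keeps the proof fully elementary and avoids any estimation.
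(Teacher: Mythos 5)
Your proof is correct, but it takes a genuinely different route from the paper's. The paper splits into three cases: it checks $k=2$ and $k=3$ by direct computation, and for $k\geq 4$ it gives a one-line chain based on row sums of Pascal's triangle, namely
$$2^{k+1}=\sum_{s=0}^{k+1}\binom{k+1}{s}\leq \sum_{s=0}^{\left\lfloor\frac{k+2}{2}\right\rfloor}\binom{k+2}{s}\leq \sum_{s=0}^{k-1}\binom{k+2}{s}\leq \sum_{s=0}^{k-1}\binom{n}{s},$$
where the first inequality uses the symmetry of row $k+2$ (its first half already sums to at least $2^{k+1}$), the second needs $\lfloor(k+2)/2\rfloor\leq k-1$, i.e.\ $k\geq 4$, and the last needs $n\geq k+2$, which follows from $k\leq n/2$. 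You instead run an induction on $k$, reducing the step to the auxiliary bound $\binom{n}{k-1}\geq 2^k$ for $n\geq 2k$, $k\geq 3$, which you settle by monotonicity in $n$ plus a nested induction via the ratio $\frac{2(k+1)(2k+1)}{k(k+2)}>2$. Both arguments are elementary and complete; yours isolates a clean, reusable central-binomial estimate ($\binom{2k}{k-1}\geq 2^k$) at the cost of a double induction, while the paper's direct comparison avoids induction entirely but needs the small cases $k=2,3$ handled separately because the symmetry trick only kicks in at $k\geq 4$. One small point worth making explicit in your write-up: the inductive hypothesis at $k-1$ is legitimately available because $k\leq n/2$ forces $k-1\leq n/2$ as well.
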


\begin{proof}
  For $k=2$ we have that $2^{k+1}-1=7$ and
  $\sum_{i=0}^{k-1} {n\choose i}=n+1\geq 7.$ For $k=3$, we have that
  $2^{k+1}-1=15$ and $\sum_{s=0}^{k-1} {n\choose s}=\frac{n^2+n+2}2$,
  which is greater than $15$ if $n\geq 6$.

  Now assume that $k\geq 4$. Since the sum of each row of Pascal's
  triangle is half of the sum of the next row, and the triangle is
  symmetrical, we have that
  $$2^{k+1}-1\leq 2^{k+1}=\sum_{s=0}^{k+1}{{k+1}\choose s}\leq
  \sum_{s=0}^{\left\lfloor\frac{k+2}2\right\rfloor}{{k+2}\choose
    s}\leq \sum_{s=0}^{k-1}{{k+2}\choose s}\leq \sum_{s=0}^{k-1}
  {n\choose s}.$$
\end{proof}

\begin{theorem}\label{descpower}
  If a family $\mc{F}$ is a descendent of a power set, then it
  satisfies the Frankl Conjecture.
\end{theorem}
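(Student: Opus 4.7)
The plan is to recast the statement via the Salzborn formulation and then induct on the number of reductions. Since every descendent is independent (as noted just before the commutative diagram \eqref{eq:diagram}), the family $\F = \mc P([n])_{\downarrow k}$ is an independent union-closed family, and by Theorem~\ref{franklsalz}, proving Frankl Conjecture for $\F$ is equivalent to establishing the Salzborn formulation for the dual family $\F^* = (\mc P([n])^*)^{(k)}$. It therefore suffices to exhibit, in $\mc N_k := \F^*$, an irreducible set of cardinality at least $|\mc N_k|/2 = (2^n-k)/2$.

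I would then prove, by induction on $j$, that the normalized family $\mc N_j := (\mc P([n])^*)^{(j)}$ always contains an irreducible of cardinality at least $|\mc N_j|/2$. The base case $j=0$ is immediate: by Lemma~\ref{techlemma}, since $\mc P([n])$ is independent, the irreducibles of $\mc N_0$ are exactly the $n$ sets $\mc H^{\iota_a}$ for $a\in[n]$, each of cardinality $2^{n-1}=|\mc N_0|/2$. For the inductive step I would invoke the last Remark of Section~\ref{norm}: passing from $\mc N_{j-1}$ to $\mc N_j$ amounts to removing a minimal nonempty set $M_j$ and contracting the dominating element $a_{\mc N_{j-1}}$, so every irreducible of $\mc N_j$ is either an inherited irreducible of $\mc N_{j-1}$ (whose cardinality decreases by exactly one, since every nonempty set contains $a_{\mc N_{j-1}}$) or a brand-new irreducible whose cardinality in $\mc N_j$ strictly exceeds $|M_j|-1$.

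The argument then splits into regimes according to whether $|M_j|$ is large (in which case any new irreducible of $\mc N_j$ easily beats $|\mc N_j|/2$), or small (in which case the simultaneous presence of many small sets in $\mc N_{j-1}$ forces several large irreducibles to coexist). Lemma~\ref{ineq binomial} would enter as the quantitative device relating $|\mc N_k|=2^n-k$ to the number of subsets of $[n]$ of small size, certifying that across all $k$ successive reductions the supply of adequately large irreducibles cannot be exhausted; this is the reason the lemma is stated precisely in terms of $2^{k+1}-1$ versus $\sum_{s=0}^{k-1}\binom{n}{s}$.

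The main obstacle I anticipate is the bookkeeping in the borderline regime, where the minimal set $M_j$ removed at step $j$ might itself be an inherited irreducible whose size has just shrunk below the Salzborn threshold: one must then verify that at some earlier stage a genuinely larger new irreducible was produced and that it has survived all intervening reductions. Handling this interaction across potentially many steps, keeping simultaneous control of both the inherited and the newly-generated irreducibles, is the crux of the proof, and is exactly where the counting bound provided by Lemma~\ref{ineq binomial} becomes indispensable.
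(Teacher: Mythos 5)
Your high-level frame matches the paper's: pass to the dual $(\mc P([n]))^*$, show each normalized family obtained by successive reductions satisfies the Salzborn formulation (an irreducible of size at least half the family), and use Lemma~\ref{ineq binomial} as the counting tool. The base case is also right. But the inductive mechanism you propose does not close, and the step you yourself flag as ``the crux'' is precisely where the actual content of the paper's proof lives. The invariant ``$\mc N_j$ contains an irreducible of cardinality at least $|\mc N_j|/2$'' is not self-sustaining: an inherited irreducible loses exactly one element per reduction while the threshold $|\mc N_j|/2$ drops by only one half, so after two steps an irreducible that was exactly at threshold is strictly below it. The only rescue is the appearance of new irreducibles, and the Remark you invoke guarantees only that these exceed $|M_j|$ --- a bound with no a priori relation to $|\mc N_j|/2$ for a general normalized family. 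So the induction, as stated, cannot be run on the generic information you cite; some structural input specific to the power set is indispensable, and your proposal never identifies it.

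The paper supplies exactly that input and consequently does not argue by a one-step induction at all. The dual $(\mc P([n]))^*$ decomposes into layers $\mc N_k$ consisting of $\binom nk$ sets of size exactly $2^n-2^{n-k}$ (the sets $\F^{\iota_F}$ for $|F|=k$); every reduction shrinks all surviving sets by one, so minimal sets are removed layer by layer, and one can say exactly which sets are irreducible after $\ell$ reductions and how large they are. For $\ell\le 2^{n-1}$ the irreducibles descending from $\mc N_2$ have size $2^n-2^{n-2}-\ell\ge(2^n-\ell)/2$, which disposes of the first half of the process in one line; for the later layers ($k\ge n/2$) the required inequality telescopes to $2^{n-k+1}-1\le\sum_{r=0}^{n-k-1}\binom nr$, which is Lemma~\ref{ineq binomial} (with a separate easy check at $k=n-1$, and $n\ge 6$ handled by citation). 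Your ``large $|M_j|$ versus small $|M_j|$'' dichotomy and your appeal to surviving earlier irreducibles are gestures toward this computation, but without the explicit sizes $2^n-2^{n-k}$ and multiplicities $\binom nk$ there is no way to certify either regime, so the proof as proposed has a genuine gap rather than being a complete alternative route.
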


\begin{proof}
  We may assume that $n\geq 6$ by \cite[Theorem 10]{LoF94}.

  Suppose $\mc{F}=\mc{P}([n])$, for some $n$, and build its
  correspondent normalized family $\mc{N}=\F^*$. Then we have
  $|\mc{N}|=2^n$, $|U(\mc{N})|=2^n-1$, $|J(\mc{N})|=n$, and
  $|I|=2^{n-1}$ for every $I\in J(\mc{N})$.  In fact, $\mc{N}$ has
  $n\choose k$ distinct sets of size $2^n-2^k$, for every
  $k=0,1,\ldots,n$, which correspond to the sets containing the
  indices of the sets in $\F_F$, for $F\in \F$ with $|F|=k$. Indeed,
  it is easy to see that $\F_F\neq \F_G$ whenever $F\neq G$, and that
  for every $F\in\F$ with $|F|=k$ one has
  $$|\F_F|=2^{n-k}\sum\limits_{i=1}^k {k\choose
    i}=2^{n-k}(2^k-1)=2^n-2^{n-k}.$$

  We will prove that the normalized families obtained in successive
  reductions satisfy the Salzborn formulation of the conjecture. This
  suffices in view of Propositions~\ref{tudo+} and~\ref{tudo*}, and
  Theorem~\ref{franklsalz}.

  Consider the subfamilies $\mc{N}_k\subseteq \mc{N}$ defined by
  $\mc{N}_k=\{N\in \mc{N}: |N|=2^n-2^{n-k}\}$. With this notation, we
  have that $J(\mc{N})= \mc{N}_1$. Let $\mc{N}_k^{(i)}$ the subfamily
  obtained by the sets descending from the sets in $\mc{N}_k$ after
  $i$ $\square'$ reductions. The sets in $\mc{N}_k^{(i)}$ have
  $2^n-2^{n-k}-i$ elements, and, clearly, while doing the successive
  reductions, when the last set descending from a set in $\mc{N}_k$ is
  removed, then all sets descending from a set in $\mc{N}_{k+1}$ are
  irreducible, as they are then minimal.

  We apply the reduction process by removing all sets descending from
  $\mc N_1$, then all sets descending from $\mc N_2$, and so on.

  Put $J(\mc{N})=\{\emptyset,I_1,\ldots, I_n\}$, where $I_i$ is a set
  containing the indexes of sets in $\F$ having the element
  $i\in U(\F)$. Assume, without loss of generality, that the removed
  minimal set is $I_1$.  When we do so, $n-1$ sets of $\mc{N}_2'$,
  with size $2^n-2^{n-2}-1$, become irreducible, namely the sets
  coming from the sets $I_1\cup I_i$, for $2\leq i \leq n$. This
  happens because $\{1,i\}\in \F$, which implies that the only
  irreducible sets of $\mc{N}$ containing the index of $\{1,i\}$ are
  $I_i$ and $I_1$, and hence $I_1\cup I_i$ cannot be written as union
  of sets not involving $I_1$.  At this instance, we thus have
  irreducible sets of size $2^n-2^{n-2}-1$. When we do the $\ell$-th
  reduction, we must have irreducible sets with at least
  $2^n-2^{n-2}-\ell$ elements, because every set that is then
  irreducible belongs to $\mc{N}_k^{(\ell)}$ for some $k\geq 2$. Since
  $$2^n-2^{n-2}-\ell \geq \frac{2^n-\ell}2 \iff \ell\leq 2^{n-1},$$ we have the
  conjecture verified up until $\mc{N}^{(2^{n-1})}$.

  After doing $2^{n-1}$ reductions, our smallest irreducible elements
  are sets descending from $\mc N_{\frac{n+1}{2}}$, if $n$ is odd
  (since the $2^{n-1}$ removed sets are precisely all sets from
  $\mc N_1\cup \cdots \cup \mc N_{\frac{n-1}{2}}$), or sets descending
  from $\mc N_{\frac{n}{2}}$, if $n$ is even (since the $2^{n-1}$
  removed sets are precisely all sets from
  $\mc N_1\cup \cdots \cup \mc N_{\frac{n}{2}-1}$ together with half
  the sets from $\mc N_{\frac n 2}$). Hence, it suffices to prove
  that, when removing sets coming from $\mc N_k$ with
  $k\geq \frac n 2$, we always have irreducible sets of size greater
  than half the respective universe.

  Let $k\geq \frac n 2$. When we remove the $i$-th set coming from
  $\mc N_k$ (this corresponds to the
  $\left(\sum_{r=1}^{k-1}{n \choose r}+i\right)$-th reduction in
  total), the size of the remaining elements, if any, coming from
  $\mc N_k$ is $2^n-2^{n-k}-\sum_{r=1}^{k-1}{n \choose r}-i$. If there
  are no remaining sets coming from $\mc N_k$, then the sets coming
  from $\mc N_{k+1}$ are now irreducible, and larger. The total
  number of sets in $\mc N$ is $2^n-\sum_{r=1}^{k-1}{n \choose
    r}-i$. We have that
  \begin{align*}
    2^n-2^{n-k}-\sum\limits_{r=1}^{k-1}{n \choose r}-i\geq
    \frac{2^n-\sum\limits_{r=1}^{k-1}{n \choose r}-i}{2}
    \iff & 2^{n-1}-2^{n-k}-\frac{\sum\limits_{r=1}^{k-1}{n \choose
           r}}{2}\geq \frac{i}{2}\\ 
    \iff &i\leq 2^{n}-2^{n-k+1}-{\sum\limits_{r=1}^{k-1}{n \choose
           r}}. \numberthis \label{eqn} 
  \end{align*}
  We know that $i\leq {n \choose k}$, since that is total number of
  sets in $\mc N_k$, so it suffices to prove that
  $$ {n \choose k}\leq 2^{n}-2^{n-k+1}-\sum\limits_{r=1}^{k-1}{n \choose r}.$$
  We have that
  \begin{align*}
    {n \choose k}\leq 2^{n}-2^{n-k+1}-\sum\limits_{r=1}^{k-1}{n
    \choose r}
    \iff &2^{n-k+1}\leq 2^{n}-\sum\limits_{r=1}^{k}{n \choose r}\\
    \iff &2^{n-k+1}-1\leq 2^{n}-\sum\limits_{r=0}^{k}{n \choose r}\\
    \iff  &2^{n-k+1}-1\leq \sum\limits_{r=k+1}^{n}{n \choose r}\\
    \iff  &2^{n-k+1}-1\leq \sum\limits_{r=0}^{n-k-1}{n \choose r}.
  \end{align*}
  If $k\leq n-2$, this follows from Lemma~\ref{ineq binomial}. The
  only case missing is the case $k=n-1$. Replacing $k$ by $n-1$ in
  (\ref{eqn}), we get that $i\leq n-2$, so we can remove the first
  $n-2$ sets of $\mc N_{n-1}$. Since $\mc N_{n-1}$ has $n$ sets, when
  one of the two last sets is removed, then the universe (of the set
  in $\mc N_n$) becomes irreducible and the conjecture is satisfied.
\end{proof}
\section{Future work}

There are some things that we believe might be interesting to do using
the construction of families using the reductions introduced in this
paper.  The first big question would be classifying the families which
descend from power sets. It would also be interesting to uncover some
relations between relatives. For example, could it be proved that if a
descendent from a certain family satisfies the Frankl Conjecture, then
all its siblings do?  If all children from a certain parent satisfy
the conjecture, then so does the parent?  If one parent satisfies the
conjecture, then does every parent satisfy it too? If every parent of
a family satisfies the conjecture, then so does that family?

Also, beyond the results presented in this paper, is it possible to
further reduce the space of families for which it suffices to prove
the conjecture? In particular, can it be reduced the $T_1$ families?
Or to some class of supratopological spaces smaller than $\T{D}$ or
$\T{iD}$ spaces, e.g.~$\T{DD}$ spaces?  Since we have proven that the
conjecture holds for $\T{FF}$ spaces, and it is enough to prove it for
$\T{D}$ spaces, what can it be said about $\T{F}$ spaces?
\section*{Acknowledgements}

The authors were partially supported by CMUP, member of LASI, which is
financed by national funds through the FCT --- Fundação para a Ciência
e a Tecnologia, I.P., under the projects with reference
UIDB/00144/2020 and UIDP/00144/2020.
\bibliographystyle{plain}
\bibliography{SupraTopNormalizedFranklConj}

\begin{thebibliography}{10}

\bibitem{AN98}
T.~Abe and B.~Nakano.
\newblock Frankl's conjecture is true for modular lattices.
\newblock {\em Graphs and Combinatorics}, 14:305--311, 1998.

\bibitem{AullThron}
C.~E. Aull and W.~J. Thron.
\newblock Separation axioms between {$T_0$} and {$T_1$}.
\newblock {\em Indagationes Mathematicae (Proceedings)}, 65:26--37, 1962.

\bibitem{BBE13}
I.~Balla, B.~Bollob{\'a}s, and T.~Eccles.
\newblock Union-closed families of sets.
\newblock {\em Journal of Combinatorial Theory, Series A}, 120:531--544, 2013.

\bibitem{BCST15}
H.~Bruhn, P.~Charbit, O.~Schaudt, and J.~A. Telle.
\newblock The graph formulation of the union-closed sets conjecture.
\newblock {\em European Journal of Combinatorics}, 43:210--219, 2015.

\bibitem{BS13}
H.~Bruhn and O.~Schaudt.
\newblock The journey of the union-closed sets conjecture.
\newblock {\em Graphs and Combinatorics}, 31:2043--2074, 2015.

\bibitem{Cam22}
S.~Cambie.
\newblock Better bounds for the union-closed sets conjecture using the entropy
  approach.
\newblock Preprint, ar{X}iv:2212.12500, 2022.

\bibitem{Car16}
A.~Carvalho.
\newblock Frankl {C}onjecture.
\newblock Master's thesis, University of Porto, 2016.

\bibitem{Csaszar02}
\'A. Cs\'asz\'ar.
\newblock Generalized topology, generalized continuity.
\newblock {\em Acta Mathematica Hungarica}, 96(4):351--357, 2002.

\bibitem{Duffus}
Dwight Duffus.
\newblock Open problem session.
\newblock In Ivan Rival, editor, {\em Graphs and Order: The Role of Graphs in
  the Theory of Ordered Sets and Its Applications}, volume 147 of {\em {NATO
  ASI series. Series C: Mathematical and Physical Sciences}}, page 525. D.
  Reidel Publishing Company, 1985.

\bibitem{LF94}
G.~Lo Faro.
\newblock A note on the union-closed sets conjecture.
\newblock {\em Journal of the Australian Mathematical Society, Series A},
  57:230--236, 1994.

\bibitem{LoF94}
G.~Lo Faro.
\newblock Union-closed sets conjecture: improved bounds.
\newblock {\em Journal of Combinatorial Mathematics and Combinatorial
  Computing}, 16:97--102, 1994.

\bibitem{Gil22}
J.~Gilmer.
\newblock A constant lower bound for the union-closed sets conjecture.
\newblock Preprint, ar{X}iv:2211.09055, 2022.

\bibitem{YJ09}
Y.~Jiang.
\newblock A generalization of the union-closed set conjecture.
\newblock \url{http://math.stanford.edu/~jyj/Union_closed_set_conj.pdf}.
\newblock [online: accessed 2009].

\bibitem{JTPV97}
R.~T. Johnson and T.~P. Vaughan.
\newblock On union-closed families, {I}.
\newblock {\em Journal of Combinatorial Theory, Series A}, 84(242-249), 1998.

\bibitem{karpas2017resultsunionclosedfamilies}
Ilan Karpas.
\newblock Two results on union-closed families.
\newblock Preprint, arXiv:1708.01434v1, 2017.

\bibitem{Mashhour}
A.~S. Mashhour, A.~A. Allam, F.~S. Mahmoud, and F.~H. Khedr.
\newblock On supratopological spaces.
\newblock {\em Indian Journal of Pure and Applied Mathematics}, 14(4):502--510,
  1983.

\bibitem{Mor06}
R.~Morris.
\newblock {FC}-families, and improved bounds for {F}rankl's conjecture.
\newblock {\em European Journal of Combinatorics}, 27:269--282, 2006.

\bibitem{PankajamSivaraj}
V.~Pankajam and D.~Sivaraj.
\newblock Some separation axioms in generalized topological spaces.
\newblock {\em Boletim da Sociedade Paranaense de Matemática}, 31(1):29--42,
  2013.

\bibitem{Poo92}
B.~Poonen.
\newblock Union-closed families.
\newblock {\em Journal of Combinatorial Theory, Series A}, 59:253--269, 1992.

\bibitem{Rei03}
D.~Reimer.
\newblock An average set size theorem.
\newblock {\em {Combinatorics, Probability and Computing}}, 12:89--93, 2003.

\bibitem{Rei00}
J.~Reinhold.
\newblock Frankl's conjecture is true for lower semimodular lattices.
\newblock {\em Graphs and Combinatorics}, 16:115--116, 2000.

\bibitem{Rod12}
E.~Rodaro.
\newblock Union-closed vs upward-closed families of finite sets.
\newblock Preprint, ar{X}iv:1208.5371v2, 2012.

\bibitem{SR89}
D.G. Sarvate and J-C. Renaud.
\newblock Improved bounds for the union-closed sets conjecture.
\newblock {\em Ars Combinatoria}, 29:181--185, 1990.

\bibitem{Vau02}
T.~P. Vaughan.
\newblock Families implying the {F}rankl conjecture.
\newblock {\em European Journal of Combinatorics}, 23:851--860, 2002.

\bibitem{Woj92}
P.~W{\'o}jcik.
\newblock Union-closed families of sets.
\newblock {\em Discrete Mathematics}, 199:173--182, 1999.

\bibitem{Yu23}
L.~Yu.
\newblock Dimension-free bounds for the union-closed sets conjecture.
\newblock {\em Entropy}, 25(5):767, 2023.

\end{thebibliography}
\end{document}